\theoremstyle{plain}
\newtheorem{thm}{Theorem}[section]
\newtheorem{lem}[thm]{Lemma}
\newtheorem{prop}[thm]{Proposition}
\def\@rst #1 #2other{#1}
\newcommand\MR[1]{\relax\ifhmode\unskip\spacefactor3000 \space\fi
  \MRhref{\expandafter\@rst #1 other}{#1}}
\newcommand{\MRhref}[2]{\href{http://www.ams.org/mathscinet-getitem?mr=#1}{MR#2}}
\theoremstyle{definition}
\newtheorem{defn}[thm]{Definition}
\newtheorem{remark}[thm]{Remark}
\numberwithin{equation}{section}
\newcommand{\dsb}{\begin{adjustwidth}{2.5em}{0pt}
\begin{footnotesize}}
\newcommand{\dse}{\end{footnotesize}
\end{adjustwidth}}
\newcommand{\ssb}{\begin{adjustwidth}{2.5em}{0pt}}
\newcommand{\sse}{\end{adjustwidth}}
\newcommand{\aryb}{\begin{eqnarray*}}
\newcommand{\arye}{\end{eqnarray*}}
\def\alb#1\ale{\begin{align*}#1\end{align*}}
\def\allb#1\alle{\begin{align}#1\end{align}}
\newcommand{\eqb}{\begin{equation}}
\newcommand{\eqe}{\end{equation}}
\newcommand{\eqbn}{\begin{equation*}}
\newcommand{\eqen}{\end{equation*}}
\newcommand{\BB}{\mathbbm}
\newcommand{\ol}{\overline}
\newcommand{\ul}{\underline}
\newcommand{\op}{\operatorname}
\newcommand{\frk}{\mathfrak}
\newcommand{\eqD}{\overset{d}{=}}
\newcommand{\ep}{\varepsilon}
\newcommand{\rta}{\rightarrow}
\newcommand{\wt}{\widetilde}
\newcommand{\mcl}{\mathcal}
\newcommand{\bdy}{\partial}
\newcommand{\rng}{\mathring}
\newcommand{\refcoord}{{\hyperref[item-metric-coord]{IV$'$}}}
\let\originalleft\left
\let\originalright\right
\renewcommand{\left}{\mathopen{}\mathclose\bgroup\originalleft}
\renewcommand{\right}{\aftergroup\egroup\originalright}
\title{Conformal covariance of the\\ Liouville quantum gravity metric for $\gamma \in (0,2)$}
 \date{ }
\author{Ewain Gwynne and Jason Miller  \\ {\it University of Cambridge}}
\begin{document}

\maketitle

\begin{abstract}
For $\gamma \in (0,2)$, $U\subset \BB C$, and an instance $h$ of the Gaussian free field (GFF) on $U$, the $\gamma$-Liouville quantum gravity (LQG) surface associated with $(U,h)$ is formally described by the Riemannian metric tensor $e^{\gamma h} (dx^2 + dy^2)$ on $U$.  
Previous work by the authors showed that one can define a canonical metric (distance function) $D_h$ on $U$ associated with a $\gamma$-LQG surface.
We show that this metric is conformally covariant in the sense that it respects the coordinate change formula for $\gamma$-LQG surfaces.  That is, if $U,\widetilde{U}$ are domains, $\phi \colon U \to \widetilde{U}$ is a conformal transformation, $Q=2/\gamma+\gamma/2$, and $\widetilde h = h\circ\phi^{-1} + Q\log|(\phi^{-1})'|$, then $D_h(z,w) = D_{\widetilde{h}}(\phi(z),\phi(w))$ for all $z,w \in U$.  This proves that $D_h$ is intrinsic to the quantum surface structure of $(U,h)$, i.e., it does not depend on the particular choice of parameterization. 
\end{abstract}

\noindent \textbf{Keywords:} Liouville quantum gravity, Gaussian free field, coordinate change, conformal covariance, LQG metric

\tableofcontents

\section{Introduction}
\label{sec-intro}

\subsection{Overview}
\label{sec-overview}

Fix $\gamma \in (0,2)$, suppose that $U \subseteq \BB C$ is a domain, and let $h$ be an instance of (some form of) the Gaussian free field (GFF) on $U$.  The $\gamma$-\emph{Liouville quantum gravity} (LQG) surface described by $h$ formally corresponds to
\begin{equation}
\label{eqn:lqg_metric}
e^{\gamma h(z)} (dx^2 + dy^2), \quad z=x+iy
\end{equation}
where $dx^2+dy^2$ denotes the Euclidean metric on $U$.  This expression does not make literal sense since $h$ is a distribution and not a function so does not take values at points.  Previously, the volume form associated with~\eqref{eqn:lqg_metric} was constructed by Duplantier and Sheffield in \cite{shef-kpz} using a regularization procedure.  Namely, for each $\epsilon > 0$ and $z \in U$ so that $B_\epsilon(z) \subseteq U$ let $h_\epsilon(z)$ denote the average of $h$ on $\partial B_\epsilon(z)$.  Then the volume form $\mu_h$ is given by the limit as $\epsilon \to 0$ of
\begin{equation}
\label{eqn:lqg_measure}
\epsilon^{\gamma^2/2} e^{\gamma h_\epsilon(z)} dx dy, \quad z=x+iy
\end{equation}
where $dx dy$ denotes Lebesgue measure on $U$.  The factor $\epsilon^{\gamma^2/2}$ is necessary for the limit to exist and be non-trivial.  It is also possible to use a similar procedure to make sense of the lengths of certain types of curves \cite{shef-kpz,shef-zipper}. See~\cite{kahane,rhodes-vargas-review} for a more general theory of random measures of this type. 

The LQG measure satisfies a certain change of coordinates formula~\cite[Proposition 2.1]{shef-kpz}.  Suppose that $\wt{U} \subseteq \BB C$ is another domain, $\phi \colon U \to \wt{U}$ is a conformal transformation, and
\begin{equation}
\label{eqn:lqg_coord_change}
\wt{h} = h \circ \phi ^{-1} + Q\log |(\phi ^{-1})'|, \quad Q = \frac{2}{\gamma} + \frac{\gamma}{2}	.
\end{equation}
Then a.s.\ $\mu_h(A) = \mu_{\wt{h}}(\phi(A))$ for all Borel sets $A \subseteq U$.  Two domain/field pairs $(U,h)$, $(\wt{U},\wt{h})$ are said to be \emph{equivalent as LQG surfaces} if they are related as in~\eqref{eqn:lqg_coord_change}.  An \emph{LQG surface} is an equivalence class of domain/field pairs with respect to this equivalence relation. We think of two equivalent pairs as being two embeddings of the same surface.

In a previous series of papers~\cite{lqg-tbm1,lqg-tbm2,lqg-tbm3}, a metric (distance function) associated with a $\sqrt{8/3}$-LQG surface was constructed in the special case when $\gamma=\sqrt{8/3}$. These works also showed that a certain special $\sqrt{8/3}$-LQG surface is equivalent, as a metric measure space, to the Brownian map of Le Gall \cite{legall-uniqueness} and Miermont \cite{miermont-brownian-map}. 

This work is part of a larger project which is focused on constructing for all $\gamma\in (0,2)$ the metric space structure of $\gamma$-LQG, i.e., the Riemannian distance function associated with~\eqref{eqn:lqg_metric}, and proving its basic properties.
We now explain the construction of the metric, which was carried out in the previous works~\cite{dddf-lfpp,local-metrics,lqg-metric-estimates,gm-confluence,gm-uniqueness}. 
It is shown in~\cite{dg-lqg-dim,dzz-heat-kernel} that for each $\gamma \in (0,2)$, there is an exponent $d_\gamma > 2$ which can be defined in several equivalent ways, e.g., as the ball volume growth exponent for certain random planar maps in the $\gamma$-LQG universality class. 
It is shown in~\cite{gp-kpz} that $d_\gamma$ is the Hausdorff dimension of a $\gamma$-LQG surface, viewed as a metric space.
The value of $d_\gamma$ is not known explicitly except that $d_{\sqrt{8/3}}=4$, but see~\cite{dg-lqg-dim,gp-lfpp-bounds,ang-discrete-lfpp} for reasonably sharp bounds on $d_\gamma$. 

We define
\eqb \label{eqn-xi-def}
\xi = \xi_\gamma := \frac{\gamma}{d_\gamma} .
\eqe 
The significance of the parameter $\xi$ is as follows: for a smooth function $f$, the Riemannian distance function associated with the metric tensor $e^f (dx^2 + dy^2)$ is obtained by integrating $e^{f/2}$ with respect to the Euclidean length measure on smooth paths. This makes it so that scaling the volume form by a factor of $C$ corresponds to scaling distances by a factor of $C^{1/2}$. The Hausdorff dimension of the $\gamma$-LQG metric is $d_\gamma$, rather than 2, so scaling the volume form (i.e., the LQG area measure) by a factor of $C$ should correspond to scaling distances by a factor of $C^{1/d_\gamma}$. This is achieved by defining the distance function using a regularization of $e^{\xi h}$ instead of $e^{\gamma h / 2}$.

Suppose for simplicity that $h$ is a whole-plane GFF. 
In light of the preceding paragraph, a natural way to approximate the distance function associated with~\eqref{eqn:lqg_metric} is via the random metrics
\eqb \label{eqn-lfpp}
D_h^\ep(z,w) := \inf_{P : z\rta w} \int_0^1 e^{\xi h_\ep(P(t))} |P'(t)| \, dt ,
\eqe
where the infimum is over all piecewise continuously differentiable paths from $z$ to $w$ and $\{h_\ep\}_{\ep > 0}$ is a certain family of continuous functions which approximate the GFF as $\ep\rta 0$ (for technical reasons convergence has only been shown when we take $h_\ep$ to be the convolution of $h$ with the heat kernel).  

It is shown in~\cite{dddf-lfpp} that the family of random metrics~\eqref{eqn-lfpp} (suitably re-scaled) is tight w.r.t.\ the local uniform topology on $\BB C \times \BB C$, and every possible subsequential limit is a metric which induces the Euclidean topology.
See also~\cite{ding-dunlap-lqg-fpp,df-lqg-metric,ding-dunlap-lgd} for earlier tightness results for approximations of the LQG metric, preceding~\cite{dddf-lfpp}. 

Subsequently, it was shown in~\cite{gm-uniqueness}, building on~\cite{local-metrics,lqg-metric-estimates,gm-confluence}, that the subsequential limit is unique, and in fact the metrics $D_h^\ep$, suitably re-scaled, converge in probability as $\ep\rta 0$ to a metric $D_h$ on $\BB C$. This $D_h$ is defined to be the $\gamma$-LQG metric. 
The metric $D_h$ is characterized by a list of axioms including the metric version of the coordinate change formula~\eqref{eqn:lqg_coord_change} for all complex affine functions. These conditions are listed just below. In particular, the metric for $\gamma=\sqrt{8/3}$ is the same as the one in~\cite{lqg-tbm1,lqg-tbm2,lqg-tbm3}. By the local dependence of $D_h$ on $h$, it follows that one can measurably associate an LQG metric for $\gamma \in (0,2)$ with the GFF on any planar domain (see~\cite[Remark 1.5]{gm-uniqueness}).  

The purpose of this work is to show that the resulting metric satisfies the metric analog of~\eqref{eqn:lqg_coord_change} for general conformal maps.  Consequently, the metric constructed in~\cite{gm-uniqueness} is intrinsic to the quantum surface structure of an LQG surface, i.e., the particular choice of embedding does not change the definition of the metric.  As we will see, establishing~\eqref{eqn:lqg_coord_change} for general conformal maps from the case of just complex affine maps is trickier than one might expect. 

Although this work builds on~\cite{dddf-lfpp,local-metrics,lqg-metric-estimates,gm-confluence,gm-uniqueness}, it can be read without any knowledge of these works, or even any knowledge about LQG beyond basic properties of the GFF. 
The reason for this is that we take the axiomatic definition of the whole-plane $\gamma$-LQG metric from~\cite{gm-uniqueness} as our starting point, and deduce our results from these axioms.

\bigskip

\noindent\textbf{Acknowledgements.} We thank two anonymous referees for helpful comments on an earlier version of this paper. We thank Jian Ding, Julien Dub\'edat, Alex Dunlap, Hugo Falconet, Josh Pfeffer, Scott Sheffield, and Xin Sun for helpful discussions. 
EG was supported by a Herchel Smith fellowship and a Trinity College junior research fellowship.
JM was supported by ERC Starting Grant 804166.

\subsection{Main results}
\label{sec-main-results}

We will now define a notion of a $\gamma$-LQG metric for arbitrary open domains $U\subset \BB C$.
The definition of the $\gamma$-LQG metric in~\cite{gm-uniqueness} is the special case when $U = \BB C$. 
We first need some preliminary definitions. Throughout, $(X,D)$ denotes a metric space.
\medskip

\noindent
For a continuous curve $P : [a,b] \rta X$ (here $[a,b]$ is equipped with the Euclidean metric and $X$ is equipped with the metric $D$), the \emph{$D$-length} of $P$ is defined by 
\eqbn
\op{len}\left( P ; D  \right) := \sup_{T} \sum_{i=1}^{\# T} D(P(t_i) , P(t_{i-1})) 
\eqen
where the supremum is over all partitions $T : a= t_0 < \dots < t_{\# T} = b$ of $[a,b]$. Note that the $D$-length of a curve may be infinite.
\medskip

\noindent
For $Y\subset X$, the \emph{internal metric of $D$ on $Y$} is defined by
\eqb \label{eqn-internal-def}
D(x,y ; Y)  := \inf_{P \subset Y} \op{len}\left(P ; D \right) ,\quad \forall x,y\in Y 
\eqe 
where the infimum is over all paths $P$ in $Y$ from $x$ to $y$. 
Then $D(\cdot,\cdot ; Y)$ is a metric on $Y$, except that it is allowed to take the value $\infty$.
\medskip
 
\noindent
We say that $(X,D)$ is a \emph{length space} if for each $x,y\in X$ and each $\ep > 0$, there exists a curve of $D$-length at most $D(x,y) + \ep$ from $x$ to $y$. 
\medskip

\noindent
A \emph{continuous metric} on an open domain $U\subset\BB C$ is a metric $D$ on $U$ which induces the Euclidean topology on $U$. 
We equip the space of such metrics with the local uniform topology for functions from $U\times U$ to $[0,\infty)$. 
We allow a continuous metric to satisfy $D(u,v) = \infty$ if $u$ and $v$ are in different connected components of $U$.
In this case, in order to have $D^n\rta D$ w.r.t.\ the local uniform topology we require that for large enough $n$, $D^n(u,v) = \infty$ if and only if $D(u,v)=\infty$.
\medskip

\noindent
A \emph{GFF plus a continuous function} on an open domain $U\subset \BB C$ is a random distribution $h$ on $U$ which can be coupled with a random continuous function $f$ in such a way that $h-f$ has the law of the (zero-boundary or whole-plane, as appropriate) GFF on $U$. We emphasize that $f$ is not required to extend continuously to $\ol U$.
\medskip
 
\noindent
For $U\subset \BB C$, let $\mcl D'(U)$ be the space of distributions (in the sense of Schwartz) on $\BB C$, equipped with the usual weak topology.

\begin{defn} \label{def-lqg-metric}
A \emph{$\gamma$-LQG metric} is a collection of functions $h\mapsto D_h$, one for each open set $U\subset\BB C$, from $\mcl D'(U)$ to the space of continuous metrics on $U$ with the following properties.
Let $U\subset \BB C$ and let $h$ be a GFF plus a continuous function on $U$.\footnote{Our axioms for a $\gamma$-LQG metric only concern a.s.\ properties of $D_h$ when $h$ is a GFF plus a continuous function. So, once we have defined $D_h$ a.s.\ when $h$ is a GFF plus a continuous function, we can take $D$ to be any measurable mapping $\mcl D'(U) \rta \{\text{continuous metrics on $U$}\}$ which is a.s.\ consistent with our given definition when $h$ is a GFF plus a continuous function. 
In fact, the construction of the metric in~\cite{dddf-lfpp,lqg-metric-estimates,gm-confluence,gm-uniqueness} only gives an explicit definition of $D_h$ in the case when $h$ is a GFF plus a continuous function. }
Then the associated metric $D_h$ satisfies the following axioms.
\begin{enumerate}[I.]
\item \textbf{Length space.} Almost surely, $(U,D_h)$ is a length space, i.e., the $D_h$-distance between any two points of $U$ is the infimum of the $D_h$-lengths of $D_h$-continuous paths (equivalently, Euclidean continuous paths) in $U$ between the two points. \label{item-metric-length}
\item \textbf{Locality.} Let $V \subset U$ be a deterministic open set. 
The $D_h$-internal metric $D_h(\cdot,\cdot ; V)$ is a.s.\ equal to $D_{h|_V}$ (so in particular it is a.s.\ determined by $h|_V$).  \label{item-metric-local}
\item \textbf{Weyl scaling.} Let $\xi$ be as in~\eqref{eqn-xi-def}. For a continuous function $f : U\rta \BB R$, define \label{item-metric-f}
\eqb \label{eqn-metric-f}
(e^{\xi f} \cdot D_h) (z,w) := \inf_{P : z\rta w} \int_0^{\op{len}(P ; D_h)} e^{\xi f(P(t))} \,dt , \quad \forall z,w\in U,
\eqe 
where the infimum is over all continuous paths from $z$ to $w$ in $U$ parameterized by $D_h$-length.
Then a.s.\ $ e^{\xi f} \cdot D_h = D_{h+f}$ for every bounded continuous function $f: U\rta \BB R$.
\item \textbf{Conformal coordinate change.} Let $\wt U\subset \BB C$ and let $\phi : U \rta \wt U$ be a deterministic conformal map. Then with $Q$ as in~\eqref{eqn:lqg_coord_change}, a.s.\ \label{item-metric-coord0}
\eqb \label{eqn-metric-coord0}
 D_h \left( z,w \right) = D_{h\circ\phi^{-1} + Q\log |(\phi^{-1})'|}\left(\phi(z) , \phi(w) \right)  ,\quad  \forall z,w \in U.
\eqe    
\end{enumerate}
\end{defn}

It is shown in~\cite[Theorem 1.2 and Corollary 1.3]{gm-uniqueness} (building on~\cite{dddf-lfpp,local-metrics,lqg-metric-estimates,gm-confluence}) that there is a unique LQG metric in the special case when $U=\BB C$, i.e., one has the following statement.

\begin{thm}[\!\cite{gm-uniqueness}] \label{thm-uniqueness}
There is a measurable function $h\mapsto D_h$ from $\mcl H'(\BB C)$ to the space of continuous metrics on $\BB C$ which satisfies the above axioms with $U = \wt U = \BB C$. In this restricted setting, Axiom~\ref{item-metric-local} is replaced by the requirement that $D_h(\cdot,\cdot;V)$ is a.s.\ determined by $h|_V$ and Axiom~\ref{item-metric-coord0} reads as follows. 
\begin{enumerate}[I$'$.]
\setcounter{enumi}{3} 
\item \textbf{Coordinate change for complex affine maps.} For each fixed deterministic $a,b \in\BB C$, $a\not=0$, a.s.\ \label{item-metric-coord}
\eqb
 D_h \left( a z + b , a w + b \right) = D_{h(a\cdot + b)  +Q\log |a| }(z,w)  , \quad \forall z,w \in \BB C.
\eqe    
\end{enumerate}
Furthermore, if $h\mapsto D_h$ and $h\mapsto \wt D_h$ are two such measurable functions, then there is a deterministic constant $C>0$ such that a.s.\ $D_h = C \wt D_h$ whenever $h$ is a GFF plus a continuous function.
\end{thm}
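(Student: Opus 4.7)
The plan is to split the statement into existence and uniqueness-up-to-constant. Existence is extracted from the subsequential-limits framework of \cite{dddf-lfpp,local-metrics,lqg-metric-estimates,gm-confluence}, while uniqueness is the hard part and proceeds via a bootstrap argument using confluence of geodesics.

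For existence, I would start from the LFPP metrics $D_h^\ep$ of \eqref{eqn-lfpp}, with $h_\ep$ the heat-kernel regularization of a whole-plane GFF $h$, and extract a subsequential limit $D_h$ of $a_\ep^{-1} D_h^\ep$ as $\ep \to 0$ using the tightness result of \cite{dddf-lfpp}. Axiom~\ref{item-metric-length} passes to the limit because each $D_h^\ep$ is already a path infimum and one can show near-geodesics have a common modulus of continuity. Axioms~\ref{item-metric-local} and~\ref{item-metric-f} are inherited termwise from their obvious $\ep$-level counterparts. For~\refcoord{}, the key calculation is that substituting $z \mapsto az + b$ into the definition of $D_h^\ep$ produces a factor of $|a|^{1-\xi Q}$ and a rescaled regularization $\wt h_{\ep/|a|}$ where $\wt h(\cdot) = h(a\cdot + b) + Q\log|a|$; this prefactor is absorbed by the ratio $a_{\ep/|a|}/a_\ep$ under the regular-variation behavior of the tightness normalization, so the limit is exactly the affine covariance in~\refcoord.

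For uniqueness, let $D_h$ and $\wt D_h$ both satisfy the axioms and introduce the optimal pointwise bi-Lipschitz constants
\[
c_* := \sup\{c : D_h \ge c \wt D_h \text{ a.s.}\},\qquad c^* := \inf\{c : D_h \le c \wt D_h \text{ a.s.}\}.
\]
A zero-one law argument using translation invariance of the whole-plane GFF modulo additive constant makes $c_*, c^*$ deterministic; the quantitative estimates for LQG distances from \cite{lqg-metric-estimates} give $0 < c_* \le c^* < \infty$. Everything reduces to showing $c_* = c^*$, which then yields $D_h = c_* \wt D_h$ and hence the uniqueness.

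The central step, and the main obstacle, is ruling out $c_* < c^*$ by contradiction. The idea is to construct a ``local improvement event'' $E_B$ inside a Euclidean ball $B$, of uniformly positive probability, on which the internal ratio $D_h(\cdot,\cdot;B)/\wt D_h(\cdot,\cdot;B)$ beats $c^*$ by a fixed amount; this is built by adding a compactly supported smooth bump via Axiom~\ref{item-metric-f} and combining with the near-optimality of $c^*$ at some nearby scale. Then the confluence of geodesics theorem from \cite{gm-confluence} forces any $\wt D_h$-geodesic between two well-separated points to traverse a macroscopic number of disjoint mesoscopic translates $B_j$ of $B$. To turn this into a macroscopic improvement one conditions on the coarse-scale field (the harmonic extensions from outside the small balls), uses the Markov property of the GFF to treat the residuals as near-independent across the $B_j$, and uses Axiom~\ref{item-metric-f} again to absorb the harmonic correctors into metric comparisons. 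The delicate point is the quantitative interplay between confluence, the logarithmic correlations of the GFF, and uniform control on the bump construction; once these are handled, a positive fraction of the $E_{B_j}$ occur simultaneously along the geodesic, forcing $D_h \le (c^*-\delta) \wt D_h$ a.s.\ for some $\delta > 0$ and contradicting the definition of $c^*$.
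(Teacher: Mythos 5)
This statement is not proved in the paper: Theorem~\ref{thm-uniqueness} is labeled $[\!\text{\cite{gm-uniqueness}}]$ and is explicitly imported as an external result (``It is shown in~\cite[Theorem 1.2 and Corollary 1.3]{gm-uniqueness}~\ldots''). The paper's entire contribution (Theorem~\ref{thm-coord}) is to \emph{upgrade} the affine coordinate-change axiom~\refcoord{} from Theorem~\ref{thm-uniqueness} to the full conformal coordinate-change Axiom~\ref{item-metric-coord0}, taking Theorem~\ref{thm-uniqueness} as a black box. So there is no in-paper proof to compare your sketch against.

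That said, your sketch is a broadly accurate high-level summary of how Theorem~\ref{thm-uniqueness} is actually established across~\cite{dddf-lfpp,local-metrics,lqg-metric-estimates,gm-confluence,gm-uniqueness}: existence from tightness of rescaled LFPP, affine covariance from the regular variation of the normalizing constants $a_\ep$ under $\ep\mapsto\ep/|a|$, and uniqueness by a bi-Lipschitz-then-improve-the-constant bootstrap using confluence of geodesics. One small imprecision: you define $\wt h = h(a\cdot+b)+Q\log|a|$ and say the limit ``is exactly the affine covariance''; the change of variables actually produces $a_\ep^{-1}D_h^\ep(az+b,aw+b)=|a|\,(a_{\ep/|a|}/a_\ep)\cdot a_{\ep/|a|}^{-1}D_{h(a\cdot+b)}^{\ep/|a|}(z,w)$, whose limit is $|a|^{\xi Q}D_{h(a\cdot+b)}(z,w)$; one then invokes Weyl scaling (Axiom~\ref{item-metric-f}) to rewrite this as $D_{h(a\cdot+b)+Q\log|a|}(z,w)$, so the $Q\log|a|$ shift should not be baked into $\wt h$ before taking the limit. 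But since the paper offers no proof here — only a citation — the relevant observation is simply that you have re-derived (in outline) the proof from the cited sources rather than anything present in the present paper.
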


We call $D_h$ from Theorem~\ref{thm-uniqueness} the \emph{$\gamma$-LQG metric associated with $h$}. 
Following~\cite[Remark 1.2]{gm-confluence}, it is not hard to extend the definition of $D_h$ from Theorem~\ref{thm-uniqueness} to GFF-type distributions on proper subdomains of $\BB C$, as we now explain. 
Suppose that $h$ is a whole-plane GFF. For each deterministic open set $U\subset \BB C$, the metric $D_h(\cdot,\cdot;U)$ is a.s.\ determined by $h|_U$ so we can simply define $D_{h|_U} := D_h(\cdot,\cdot ;U)$.
We can write $h|_U = \rng h^U + \frk h^U$, where $\rng h^U$ is a zero-boundary GFF on $U$ and $\frk h^U$ is a random harmonic function on $U$ independent from $\rng h^U$.
In the notation~\eqref{eqn-metric-f}, we define
\eqb
D_{\rng h^U} := e^{-\xi \frk h^U} \cdot D_{h|_U} .
\eqe
It is easily seen from Axioms~\ref{item-metric-local} and~\ref{item-metric-f} that $D_{\rng h^U}$ is a measurable function of $\rng h^U$ (see~\cite[Remark 1.2]{gm-confluence}).   
In light of Axiom~\ref{item-metric-f}, we can then define $D_{\rng h^U + f}$ as a measurable function of $\rng h^U  + f$ for any random continuous function $f : U\rta\BB R$. 
By inspection, this function from distributions to metrics satisfies Axioms~\ref{item-metric-length} through~\ref{item-metric-f} above. 
The main result of this paper is the following theorem which verifies that the above metric satisfies Axiom~\ref{item-metric-coord0}.  This completes the program to define the $\gamma$-LQG metric for all $\gamma \in (0,2)$ on an arbitrary planar domain.

\begin{thm}
\label{thm-coord}
Let $U\subset\BB C$ be an open domain and let $\phi : \BB C\rta U$ be a conformal map.
Also let $h^U$ be a GFF on $U$ plus a continuous function.
Almost surely, the $\gamma$-LQG metric satisfies the coordinate change formula
\eqb \label{eqn-coord}
D_{h^U}(z,w) = D_{h^U\circ\phi^{-1} + Q\log|(\phi^{-1})'|}\left(\phi(z),\phi(w)\right) ,\quad\forall z,w \in U . 
\eqe
That is, the mapping $h^U \mapsto D_{h^U}$ constructed in~\cite{gm-uniqueness} is a $\gamma$-LQG metric in the sense of Definition~\ref{def-lqg-metric}.
\end{thm}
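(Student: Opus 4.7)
The plan is to bootstrap from Axiom~\refcoord\ (coordinate change for affine maps) in the whole-plane setting to the general conformal coordinate-change formula~\eqref{eqn-coord}, using the length-space, locality, and Weyl-scaling axioms of Definition~\ref{def-lqg-metric} in a critical way.

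First, I would reduce to the case where $h^U$ is the restriction to $U$ of a whole-plane GFF $h$. By the construction of $D_{h^U}$ for $h^U$ a GFF plus a continuous function (restriction of the whole-plane metric followed by Weyl scaling), adding a continuous function $f$ to $h^U$ on $U$ transforms both sides of~\eqref{eqn-coord} in the same way under Axiom~\ref{item-metric-f}, since Weyl scaling by $f$ on $U$ corresponds under $\phi$ to Weyl scaling by $f\circ\phi^{-1}$ on $\phi(U)$. Next, I would localize the problem via Axioms~\ref{item-metric-length} and~\ref{item-metric-local}: both sides of~\eqref{eqn-coord} are length-space metrics which, by locality, are determined by the field on arbitrarily small neighborhoods. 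So it suffices to prove that for every $z_0 \in U$ there is a small neighborhood $V \subset U$ of $z_0$ on which $D_{h|_U}(\cdot,\cdot;V)$ equals the $\phi$-pullback of $D_{\wt h}(\cdot,\cdot;\phi(V))$, where $\wt h := h\circ\phi^{-1} + Q\log|(\phi^{-1})'|$; the global identity then follows by covering $U$ and using the length-space property.

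The core step is the local analysis at $z_0$. Factor $\phi = T\circ g$, where $T(z) = \phi(z_0) + \phi'(z_0)(z-z_0)$ is the affine first-order approximation and $g = T^{-1}\circ\phi$ fixes $z_0$ with $g'(z_0)=1$, so $g(z) = z + O(|z-z_0|^2)$ on $V$. Axiom~\refcoord\ applied to $T$ converts $D_h$ into a metric with constant Weyl factor $-Q\log|\phi'(z_0)|$ on $T(V)$, whereas the Weyl factor $Q\log|(\phi^{-1})'|$ appearing in $\wt h$ differs from this constant by a continuous function on $\phi(V)$ that tends uniformly to zero as $V$ shrinks. This discrepancy can be absorbed via Axiom~\ref{item-metric-f} (with a small error), reducing the problem to verifying the coordinate-change identity for the nearly-identity conformal map $g$.

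The main obstacle will be precisely this last step: upgrading from the affine case to a conformal map that is merely $C^1$-close to the identity on an arbitrarily small neighborhood of $z_0$. I expect this to proceed via a rescaling argument --- zoom in at $z_0$ on a sequence of scales $\delta\to 0$, use the affine scale covariance provided by Axiom~\refcoord\ to normalize distances, and invoke regularity estimates for $D_h$ from~\cite{gm-confluence,gm-uniqueness} to show that the metric distortion produced by the $O(|z-z_0|^2)$ higher-order correction in $g$ is negligible in the limit. The delicate interplay between the four axioms --- and in particular the use of Weyl scaling to convert small perturbations of the conformal map into small perturbations of the field --- is likely to be the technical heart of the argument.
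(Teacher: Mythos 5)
Your opening reduction (pass to a whole-plane GFF via Weyl scaling, then localize, then factor $\phi = T\circ g$ with $T$ affine and $g$ near-identity) matches the spirit of the paper's Section~\ref{sec-bilip}, and your instinct that a zoom-in/rescaling argument should make the $O(|z-z_0|^2)$ correction ``negligible'' is the right one. But there are two genuine gaps.

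The larger one is in the jump from the local estimate to the global identity. You write that ``it suffices to prove that for every $z_0 \in U$ there is a small neighborhood $V$ of $z_0$ on which $D_{h|_U}(\cdot,\cdot;V)$ \emph{equals} the $\phi$-pullback of $D_{\wt h}(\cdot,\cdot;\phi(V))$; the global identity then follows by covering and the length-space property.'' The covering-plus-length-space reduction is valid, but it requires \emph{exact} equality of the internal metrics on a fixed neighborhood $V$. What the rescaling argument actually produces (and what the paper proves in Proposition~\ref{prop-scaled-metric-uniform}) is only that for small $r$, with high probability $\sup_{u,v\in B_r(z)}|D_h^\phi(u,v) - D_h(u,v)| \leq \delta r^{\xi Q} e^{\xi h_{\BB f,r}(z)}$. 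The error vanishes only as $r\to 0$, never on a fixed $V$. One cannot then sum this estimate over a fine partition of a rectifiable path and conclude the $D_h$-length equals the $D_h^\phi$-length, because the errors accumulate: this is exactly the obstacle spelled out at the end of Section~\ref{sec-outline}. The paper resolves it by a genuinely two-step argument you haven't anticipated: first a bi-Lipschitz comparison of $D_{h|_U}$ and $D_h^\phi$ via a general criterion for locally-determined metrics of the same GFF (Theorem~\ref{thm-bilip}, from~\cite{local-metrics}), and then a self-improvement argument showing the optimal bi-Lipschitz constant must be~1, using an annulus-covering decomposition of a $D_h$-geodesic and the iteration Lemma~\ref{lem-annulus-iterate}. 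Without something of that nature, your plan stops short of the theorem.

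The second gap is in the rescaling step itself. You propose to absorb the nonlinear correction in $g$ via Weyl scaling and ``regularity estimates for $D_h$,'' but the obstruction is subtler: nothing in the axioms tells you that $\phi \mapsto D_{h\circ\phi^{-1}}$ is a.s.\ continuous, so closeness of $g$ to the identity does not a priori imply that $D_{h\circ g^{-1}}$ is a.s.\ close to $D_h$. The paper gets around this by showing closeness in law (total-variation convergence of the fields, hence of the metrics by Axiom~\ref{item-metric-local}), and then upgrading marginal convergence to convergence of the \emph{joint} law $(h, D_h, h^{\phi_n}, D_h^{\phi_n}) \to (h, D_h, h(\alpha\cdot), D_h)$ via a Prokhorov/tightness argument combined with a.s.\ distributional convergence (Lemmas~\ref{lem-tv-conv}--\ref{lem-field-metric-converge}). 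Your proposal does not address how to make this a.s.\ rather than in-law comparison, and that is where the technical heart of Section~\ref{sec-bilip} actually lies.
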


As noted above, Theorem~\ref{thm-coord} says that the LQG metric depends intrinsically on the $\gamma$-LQG surface $(U,D_h)$, i.e., it does not depend on the particular choice of parameterization for this surface. Hence a $\gamma$-LQG surface with any choice of underlying conformal structure makes sense as a metric space. 

\subsection{Outline}
\label{sec-outline}

Throughout most of the proof of Theorem~\ref{thm-coord}, we will work with a whole-plane GFF $h$ restricted to a domain in $\BB C$. We will transfer to other variants of the GFF at the very end of the argument using Axiom~\ref{item-metric-f}. 

For an open set $U\subset\BB C$ and a conformal map $\phi  : U \rta \phi(U)$, we define 
\eqb \label{eqn-coord-field-def}
h^\phi := h\circ\phi^{-1} +Q\log|(\phi^{-1})'| \quad\text{and} \quad
D_h^\phi(z,w) := D_{h^\phi}(\phi(z),\phi(w)),\quad\forall z,w\in U .
\eqe
By the conformal invariance of the GFF, $h^\phi$ is the sum of a zero-boundary GFF and a harmonic function on $\phi(U)$.  Therefore $D_{h^\phi}$ is defined as explained before the statement of Theorem~\ref{thm-coord}.  
Furthermore, from the locality of $D_h$ it is easily seen that $D_h^\phi$ is a local metric for $h|_U$ and is a.s.\ determined by $h|_U$.
We want to show that a.s.\ $D_h^\phi = D_{h|_U}$. 

As one might expect, the basic idea of the proof is to use that the conformal map $\phi$ looks approximately like a complex affine map in a small neighborhood of a typical point, then apply Axiom~\refcoord. However, there are a number of complications in making this argument work which make the proof of Theorem~\ref{thm-coord} more difficult than one might expect at first glance.
  
The first main step of the proof, which is carried out in \textbf{Section~\ref{sec-bilip}}, is to show that if $z\in U$ and $r >0$ is small, then $D_h$ and $D_h^\phi$ are close on $B_r(z)$, in the sense that with high probability $\sup_{u,v\in B_r(z)} |D_h^\phi(u,v) - D_h(u,v)|$ is of smaller order than the $D_h$-diameter of $B_r(z)$ (which by Axioms~\ref{item-metric-f} and~{\refcoord} is typically of order $r^{\xi Q} e^{\xi h_r(z)}$). 
See Proposition~\ref{prop-scaled-metric-uniform} for a precise statement. 
Here we note that when $r > 0$ is small, the $D_h$-diameter of $B_r(z)$ is smaller than its $D_h$-distance to $\bdy U$, so the restrictions to $B_r(z)$ of $D_{h|_U} = D_h(\cdot,\cdot ; U)$ and $D_h$ agree. 

The main difficulty in this step is that we do not know a priori that $\phi\mapsto D_h^\phi$ depends continuously on the conformal map $\phi$ in the almost sure sense.
This is because we do not know that $\phi \mapsto D_{h\circ\phi^{-1}}$ is continuous. Rather, we only know that if $\phi^{-1}$ is uniformly close to the linear map $z\mapsto\alpha z$ (which will be the case if we start with an arbitrary conformal map $\phi$ and zoom in on a sufficiently small neighborhood of any given point) then the law of $h\circ\phi^{-1}$ is close to the law of $h(\alpha\cdot)$ in the total variation sense (Lemma~\ref{lem-tv-conv}). This tells us that the \emph{marginal laws} of $D_{h\circ\phi^{-1}}$ and $D_{h(\alpha\cdot)}$ are close. 

We will show in Lemma~\ref{lem-field-metric-converge} that \emph{joint law} of $D_{h\circ\phi^{-1}}$ and $D_{h(\alpha\cdot)}$ is close to the joint law of two copies of the same instance of $D_{h(\alpha\cdot)}$. The basic idea of the argument is as follows. If $\{\phi_n\}_{n\in\BB N}$ is a sequence of conformal maps such that $\phi_n^{-1}$ converges uniformly on compact subsets of $\BB C$ to $z\mapsto \alpha z$, then using basic facts about the GFF we can establish the convergence of joint laws
\eqb
(h \circ\phi_n^{-1} , D_{h\circ\phi_n^{-1}}) \rta (h(\alpha\cdot) , D_{h(\alpha\cdot)} ) \quad\text{and} \quad (h , h\circ\phi_n^{-1}) \rta (h , h(\alpha\cdot)).
\eqe
This implies that the joint laws of the 4-tuples $(h , D_h , h\circ\phi_n^{-1} , D_{h\circ\phi_n^{-1}})$ are tight, and moreover allows us to show that any possible subsequential limit is of the form $(h,D_h,h(\alpha\cdot) , D_{h(\alpha\cdot)})$. 

By re-scaling and applying Axiom~\refcoord, the preceding paragraph allows us to show that if $\phi : U\rta \phi(U)$ is a conformal map, then the metric $D_{h|_U}$ and the metric $D_h^\phi $ appearing in~\eqref{eqn-coord} are close at small scales in the desired sense. 

In \textbf{Section~\ref{sec-coord-proof}}, we upgrade from the statement that $D_{h|_U}$ and $D_h^\phi$ are close with high probability in a small neighborhood of any point to the statement that $D_{h|_U}$ and $D_h^\phi$ are close with high probability everywhere. 
This will be carried out in two steps. In Section~\ref{sec-coord-bilip}, we show that $D_{h|_U}$ and $D_h^\phi$ are a.s.\ bi-Lipschitz equivalent using a general criterion for bi-Lipschitz equivalence of two local metrics for the same GFF (Theorem~\ref{thm-bilip}). We then show that the optimal bi-Lipschitz constant is 1 in Section~\ref{sec-attained} using a ``good annulus covering" argument similar to the one in~\cite[Section 3]{gm-uniqueness}.

The reason why we need to use a two-step argument of this form is as follows. Even though we know that $D_{h|_U}$ and $D_h^\phi$ are close at small scales, our estimates are not sharp enough to say directly that a quantity of the form $\sum_{j=1}^N |D_{h|_U}(P(t_{j-1}) , P(t_j)) - D_h^\phi(P(t_{j-1}) , P(t_j))|$ is small when $P : [0,T] \rta U$ is a $D_h$-rectifiable path and $0 = t_0 < t_1 < \dots < t_N = T$ is a fine partition of $[0,T]$. 
The arguments of Section~\ref{sec-coord-proof} allow us to restrict attention to ``good" scales where we can say that the ratios of certain $D_{h|_U}$-distances and $D_h^\phi$-distances are close to 1.

\subsection{Basic notation}
\label{sec-notation}

\noindent
We write $\BB N = \{1,2,3,\dots\}$ and $\BB N_0 = \BB N \cup \{0\}$. 
\medskip

\noindent
For $a < b$, we define the discrete interval $[a,b]_{\BB Z}:= [a,b]\cap\BB Z$. 
\medskip

\noindent
If $f  :(0,\infty) \rta \BB R$ and $g : (0,\infty) \rta (0,\infty)$, we say that $f(\ep) = O_\ep(g(\ep))$ (resp.\ $f(\ep) = o_\ep(g(\ep))$) as $\ep\rta 0$ if $f(\ep)/g(\ep)$ remains bounded (resp.\ tends to zero) as $\ep\rta 0$.  
We similarly define $O(\cdot)$ and $o(\cdot)$ errors as a parameter goes to infinity. 
We will often specify any requirements on the dependencies on rates of convergence in $O(\cdot)$ and $o(\cdot)$ errors in the statements of lemmas/propositions/theorems, in which case we implicitly require that errors, implicit constants, etc., appearing in the proof satisfy the same dependencies.  
\medskip

\noindent
For $z\in\BB C$ and $r>0$, we write $B_r(z)$ for the Euclidean ball of radius $r$ centered at $z$. We also define the open annulus
\eqb \label{eqn-annulus-def}
\BB A_{r_1,r_2}(z) := B_{r_2}(z) \setminus \ol{B_{r_1}(z)} ,\quad\forall 0 < r_r < r_2 < \infty .
\eqe
\medskip

\section{Comparison of $D_h$ and $D_h^\phi$ at small scales}
\label{sec-bilip}

The goal of this section is to show that in the notation~\eqref{eqn-coord-field-def}, the metrics $D_{h^\phi}$ and $D_{h|_U}$ are close with high probability at small scales (see Proposition~\ref{prop-scaled-metric-uniform} just below). 

We will be working with conformal maps, so since circles are not preserved under conformal maps it is sometimes convenient to use a slightly different normalization for the GFF than the usual $h_1(0) = 0$. In particular, we fix a smooth compactly supported, radially symmetric bump function $\BB f : \BB C\rta [0,1]$ with $\int_{\BB C} f(z)\,dz = 1$ and for $z\in\BB C$ and $r>0$ we define
\eqb \label{eqn-smooth-normalize}
h_{\BB f , r}(z) := (h(r\cdot+z) , \BB f) = (h , r^{-2} \BB f(r^{-1}(\cdot-z))) .
\eqe
We will often normalize $h$ by requiring $h_{\BB f , 1}(0) = 0$ instead of $h_1(0) = 0$. 
The advantage of this is that, since $\BB f$ is smooth, the smoothed average $h_{\BB f , 1}(0)$ depends continuously on $h$ in the distributional topology. This fact is needed in the proof of Lemma~\ref{lem-field-metric-converge0} below.

The main result of this section is the following proposition. 

\begin{prop} \label{prop-scaled-metric-uniform}
Let $h$ be a whole-plane GFF normalized so that $h_{\BB f,1}(0) = 0$.  
For each fixed $\delta  > 0$ and compact set $K\subset U$, 
\eqb \label{eqn-scaled-metric-uniform}
\lim_{r\rta 0} \inf_{z\in K} \BB P\left[ \sup_{u,v\in B_r(z)} | D_h^\phi(u,v)  -  D_h(u,v) | \leq \delta r^{\xi Q} e^{\xi h_{\BB f , r}(z)} \right] =1 .
\eqe
\end{prop}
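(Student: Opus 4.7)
The strategy is to rescale near the point $z \in K$ so that distances become order-one, and exploit the fact that a conformal map is approximately affine at small scales, where Axiom~\refcoord applies exactly. Concretely, for $z \in K$ and $r > 0$ I introduce the rescaled field
\[
h^{z,r}(w) := h(z + rw) - h_{\BB f, r}(z)
\]
and the rescaled conformal map
\[
\phi^{z,r}(w) := \frac{\phi(z + rw) - \phi(z)}{r \, \phi'(z)} .
\]
By scale invariance of the whole-plane GFF modulo additive constants, $h^{z,r} \eqD h$. Combining Axiom~\ref{item-metric-f} (Weyl scaling) with Axiom~\refcoord applied to the complex affine maps $w \mapsto z + rw$ and $w \mapsto \phi(z) + r\phi'(z)\, w$ yields the two identities
\[
D_h(z + ru, z + rv) = r^{\xi Q} e^{\xi h_{\BB f, r}(z)} D_{h^{z,r}}(u, v),
\]
\[
D_h^\phi(z + ru, z + rv) = r^{\xi Q} e^{\xi h_{\BB f, r}(z)} D_{\wt h^{z,r}}\!\bigl(\phi^{z,r}(u),\, \phi^{z,r}(v)\bigr) ,
\]
where $\wt h^{z,r} := h^{z,r} \circ (\phi^{z,r})^{-1} + Q \log|((\phi^{z,r})^{-1})'|$. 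Since $\phi$ is analytic with $\phi'$ bounded away from $0$ on $K$, the maps $\phi^{z,r}$ converge to the identity uniformly on compact subsets of $\BB C$ as $r \to 0$, at a rate uniform in $z \in K$; in particular $\log|((\phi^{z,r})^{-1})'| \to 0$ uniformly on compact sets.

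After these reductions, the proposition amounts to showing
\[
\sup_{u, v \in B_1(0)} \bigl| D_{\wt h^{z,r}}\bigl(\phi^{z,r}(u), \phi^{z,r}(v)\bigr) - D_{h^{z,r}}(u, v) \bigr| \longrightarrow 0
\]
in probability as $r \to 0$, uniformly in $z \in K$. The uniform convergence $\phi^{z,r} \to \op{id}$, combined with a.s.\ uniform continuity of $D_{h^{z,r}}$ on $B_2(0)$ (which follows from $D_{h^{z,r}}$ being a continuous metric inducing the Euclidean topology, together with tightness coming from $h^{z,r} \eqD h$), reduces this to showing
\[
\sup_{u,v \in B_1(0)} \bigl|D_{\wt h^{z,r}}(u,v) - D_{h^{z,r}}(u,v)\bigr| \longrightarrow 0 \quad \text{in probability.}
\]

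For the displayed limit above, I follow the tightness-plus-identification strategy sketched in Section~\ref{sec-outline}. Lemma~\ref{lem-tv-conv} gives closeness in total variation between the law of $\wt h^{z,r}$ and that of $h^{z,r}$, while a direct computation with test functions (using that $\phi^{z,r} \to \op{id}$ uniformly on compact sets) yields the joint distributional convergence $(h^{z,r}, \wt h^{z,r}) \to (\wt h, \wt h)$ with $\wt h \eqD h$. Combining these ingredients with the measurability of $h \mapsto D_h$, the 4-tuple
\[
\bigl(h^{z,r}, \, D_{h^{z,r}}, \, \wt h^{z,r}, \, D_{\wt h^{z,r}}\bigr)
\]
is tight as $r \to 0$, and every subsequential limit must be of the form $(\wt h, D_{\wt h}, \wt h, D_{\wt h})$ since the first two coordinates are determined by the first (respectively the last two by the third) via the measurable map $h \mapsto D_h$, while the first and third coordinates are jointly equal. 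This identifies the limit uniquely and yields the required closeness in probability. Unrescaling via the two identities from the first paragraph, and extracting uniformity in $z \in K$ via a compactness-and-contradiction argument (a failure would produce a sequence $(z_n, r_n) \to (z_*, 0)$ along which the probability bound is violated, contradicting the distributional limit at $z_*$), yields~\eqref{eqn-scaled-metric-uniform}.

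The principal obstacle is the joint-convergence step. The map $h \mapsto D_h$ is only measurable in the distributional topology, so TV-closeness of the marginal field laws does not automatically give any closeness of the metrics as \emph{jointly} coupled random variables on the same probability space. The tightness-plus-identification argument bridges this gap, but requires simultaneous control over the field $h^{z,r}$, the coordinate-changed field $\wt h^{z,r}$, and the harmonic contribution $Q \log|((\phi^{z,r})^{-1})'|$, all uniformly over $z$ in the compact set $K$.
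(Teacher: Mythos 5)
Your proposal is correct and follows essentially the same route as the paper: the rescaling identity via Axioms~\ref{item-metric-f} and~\refcoord\ (the paper's Lemma~\ref{lem-coord-scale}), total-variation plus almost-sure distributional convergence of the fields (Lemma~\ref{lem-tv-conv}), a tightness-and-identification argument for the 4-tuple (Lemmas~\ref{lem-field-metric-converge0} and~\ref{lem-field-metric-converge}), and a compactness-and-contradiction step for uniformity over $K$ (Lemma~\ref{lem-scaled-metric-conv}), with only the cosmetic difference that you normalize the rescaled map by $\phi'(z)$ so the limiting map is the identity rather than $w\mapsto\alpha w$. One small caution: in identifying subsequential limits, the fact that the fourth coordinate of the limit is the metric of the third must be deduced from the total-variation convergence together with Axiom~\ref{item-metric-local} (as in Lemma~\ref{lem-field-metric-converge0}), not from measurability of $h\mapsto D_h$ alone, since a merely measurable relation need not pass to limits in law.
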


Proposition~\ref{prop-scaled-metric-uniform} involves the smooth averages $h_{\BB f , r}(z)$ instead of circle averages, but it is easy to convert to statements which do not depend on the choice of normalization for the field. For example, we have the following consequence of Proposition~\ref{prop-scaled-metric-uniform} which will be used in Section~\ref{sec-attained}. 

\begin{lem} \label{lem-scaled-metric-ratio}
Let $h$ be a whole-plane GFF, with any choice of normalization.
Fix $\delta , b \in (0,1)$. For each fixed compact set $K\subset U$, 
\eqb \label{eqn-scaled-metric-ratio}
\lim_{r\rta 0} \inf_{z\in K} \BB P\left[  1-\delta \leq \frac{D_h^\phi(u,v)}{D_h(u,v)} \leq 1+\delta ,\: \forall u,v\in B_r(z) \: \text{with}\: |u-v|\geq b r \right] = 1 .  
\eqe
\end{lem}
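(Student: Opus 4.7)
The plan is to deduce the ratio bound from the absolute bound of Proposition~\ref{prop-scaled-metric-uniform}, combined with a uniform lower bound on $D_h$-distances between pairs of points in $B_r(z)$ whose Euclidean separation is at least $br$. First, observe that the ratio $D_h^\phi(u,v)/D_h(u,v)$ is invariant under adding a constant to $h$: by Axiom~\ref{item-metric-f} applied to a constant function, $D_{h+c} = e^{\xi c} D_h$, and since $(h+c)\circ\phi^{-1} + Q\log|(\phi^{-1})'| = h^\phi + c$, also $D_{h+c}^\phi = e^{\xi c} D_h^\phi$. Hence I may assume without loss of generality that $h$ is normalized so that $h_{\BB f,1}(0) = 0$, as in Proposition~\ref{prop-scaled-metric-uniform}.

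The main auxiliary step is to show that for each $b \in (0,1)$ and $\eta > 0$, there exists $c = c(b,\eta) > 0$ such that for every $z \in K$ and every sufficiently small $r > 0$,
\eqbn
\BB P\left[ D_h(u,v) \geq c\, r^{\xi Q} e^{\xi h_{\BB f,r}(z)},\: \forall u,v \in B_r(z) \text{ with } |u-v| \geq br \right] \geq 1-\eta.
\eqen
To prove this, I would apply Axiom~\refcoord{} to the affine map $w\mapsto rw+z$ and then Axiom~\ref{item-metric-f} with a constant function to obtain the exact scaling identity
\eqbn
D_h(u,v) = r^{\xi Q} e^{\xi h_{\BB f,r}(z)}\, D_{h^r_z}\!\left(\tfrac{u-z}{r},\tfrac{v-z}{r}\right),
\eqen
where $h^r_z(w) := h(rw+z) - h_{\BB f,r}(z)$. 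A short computation from the definition~\eqref{eqn-smooth-normalize} gives $(h^r_z)_{\BB f,1}(0) = 0$, so by scale and translation invariance of the whole-plane GFF modulo additive constant, $h^r_z \eqD h$ for every $z$ and $r$. Since $D_h$ is a.s.\ a continuous metric on $\BB C$ inducing the Euclidean topology, the random variable $\inf\{D_h(z',w') : z',w' \in \ol{B_1(0)},\, |z'-w'| \geq b\}$ is a.s.\ strictly positive, so we may choose $c(b,\eta) > 0$ making this infimum at least $c$ with probability at least $1-\eta$. The scaling identity then transfers this bound to $B_r(z)$ uniformly in $z \in K$ and $r > 0$.

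To conclude, given $\delta, b \in (0,1)$ and $\eta > 0$, I pick $c = c(b,\eta/2)$ from the previous step and apply Proposition~\ref{prop-scaled-metric-uniform} with $c\delta$ in place of $\delta$. For all sufficiently small $r$, the intersection of the two high-probability events has probability at least $1-\eta$, uniformly in $z \in K$; on this intersection, for any $u,v\in B_r(z)$ with $|u-v|\geq br$,
\eqbn
\left| \frac{D_h^\phi(u,v)}{D_h(u,v)} - 1 \right| \leq \frac{|D_h^\phi(u,v) - D_h(u,v)|}{D_h(u,v)} \leq \frac{c\delta\, r^{\xi Q} e^{\xi h_{\BB f,r}(z)}}{c\, r^{\xi Q} e^{\xi h_{\BB f,r}(z)}} = \delta,
\eqen
which gives the lemma. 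The argument is essentially routine; the only point requiring any care is the uniform lower bound, and this in turn follows cleanly from the exact scaling property of $D_h$ under affine coordinate change together with the fact that $D_h$ a.s.\ induces the Euclidean topology on $\BB C$.
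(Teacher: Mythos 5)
Your proposal is correct and follows essentially the same route as the paper: reduce to the normalization $h_{\BB f,1}(0)=0$ using Weyl scaling, establish via Axioms~\ref{item-metric-f} and~\refcoord{} and scale/translation invariance a uniform lower bound $D_h(u,v)\geq c\,r^{\xi Q}e^{\xi h_{\BB f,r}(z)}$ for $|u-v|\geq br$, and then apply Proposition~\ref{prop-scaled-metric-uniform} with $c\delta$ in place of $\delta$. You merely spell out the exact scaling identity that the paper invokes in a single sentence, so there is nothing to add.
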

\begin{proof}
By Axiom~\ref{item-metric-f}, changing the normalization of $h$ (i.e., adding a constant to $h$) does not affect the value of $D_h^\phi(u,v) / D_h(u,v)$. 
Therefore, we can assume without loss of generality that $h$ is normalized so that $h_{\BB f , 1}(0) = 0$. 
By Axioms~\ref{item-metric-f} and~\refcoord\, and the scale and translation invariance of the law of $h$, modulo additive constant, for each $\ep > 0$ we can find $c > 0$ depending only on $b$ such that with probability at least $1-\ep$,
\eqbn
 D_h(u,v) \geq c r^{\xi Q} e^{\xi h_{\BB f  ,r}(z)} ,\: \forall u,v\in B_r(z) \: \text{with}\: |u-v|\geq b r . 
\eqen
The lemma statement follows by combining this with Proposition~\ref{prop-scaled-metric-uniform} with $c\delta$ in place of $\delta$, then sending $\ep\rta 0$.
\end{proof}

\subsection{$D_h^\phi$ converges to $D_h$ as $\phi$ converges to a linear map}
\label{sec-field-metric-converge}

Throughout this section we let $h$ be a whole-plane GFF normalized so that $h_{\BB f , 1}(0) = 0$, with $\BB f$ as in~\eqref{eqn-smooth-normalize}.
The main step in the proof of Proposition~\ref{prop-scaled-metric-uniform} is the following lemma, which we will prove in this section. 

\begin{lem} \label{lem-field-metric-converge}
Let $\phi_n : U_n\rta\phi_n(U_n)$ be a sequence of conformal maps such that $\phi_n(0) = 0$, $\phi_n'(0) \rta 1/\alpha \in\BB C \setminus \{0\}$ as $n\rta\infty$, and each fixed compact subset of $\BB C$ is contained in $U_n$ for large enough $n$. 
Then
\eqb \label{eqn-field-metric}
\left( h , D_h , h^{\phi_n}  , D_h^{\phi_n} \right) 
\rta \left( h , D_h , h(\alpha \cdot) , D_h \right) 
\eqe
in law with respect to the distributional topology and the local uniform topology on $\BB C\times\BB C$, as appropriate. 
\end{lem}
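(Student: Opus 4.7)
The plan is to follow the outline in Section~\ref{sec-outline}: establish two auxiliary convergences, combine them to get tightness of the full 4-tuple, identify any subsequential limit by measurability, and finally translate the result into the form stated in the lemma using the metric axioms.

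First I would collect basic facts about $\phi_n$. By a standard normal-families argument together with the Koebe distortion theorem, the hypotheses force $\phi_n^{-1}(z) \to \alpha z$ and $(\phi_n^{-1})'(z) \to \alpha$ locally uniformly on $\BB C$; the limit is pinned down because any univalent entire function is affine. A change of variables then gives, for each test function $\psi$,
\[
(h\circ\phi_n^{-1},\psi) = (h,\psi\circ\phi_n\,|\phi_n'|^2) \to (h(\alpha\cdot),\psi) \quad \text{a.s.},
\]
so $(h,h\circ\phi_n^{-1})\to(h,h(\alpha\cdot))$ a.s.\ in the distributional topology, and combined with $Q\log|(\phi_n^{-1})'|\to Q\log|\alpha|$ locally uniformly this gives a.s.\ convergence of $h^{\phi_n}$ to $h(\alpha\cdot)+Q\log|\alpha|$.

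Next I would prove the marginal joint convergence
\[
(h\circ\phi_n^{-1},D_{h\circ\phi_n^{-1}}) \to (h(\alpha\cdot),D_{h(\alpha\cdot)}) \quad \text{in law.}
\]
The crucial input is a total variation estimate (mentioned in the outline): for each compact $V\subset\BB C$, the law of $(h\circ\phi_n^{-1})|_V$ is close in total variation to that of $h(\alpha\cdot)|_V$ for large $n$. Since Axiom~\ref{item-metric-local} implies that $D_{h_0}$ restricted to any compact set is a measurable function of $h_0$ on a slightly enlarged compact set, TV convergence of the fields transfers to TV convergence of the associated field/metric pairs on compacts, which yields the claimed convergence in law. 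Combined with the first step, this gives tightness of $(h,D_h,h\circ\phi_n^{-1},D_{h\circ\phi_n^{-1}})$. For any subsequential limit $(h,D_h,\bd h,\bd D)$, the first step forces $\bd h = h(\alpha\cdot)$ a.s., while the $(\bd h,\bd D)$-marginal has the law of $(h(\alpha\cdot),D_{h(\alpha\cdot)})$; measurability of $D_{h(\alpha\cdot)}$ as a function of $h(\alpha\cdot)$ then forces $\bd D = D_{h(\alpha\cdot)}$ a.s. So the 4-tuples converge in law to $(h,D_h,h(\alpha\cdot),D_{h(\alpha\cdot)})$.

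To finish, I would translate this into the stated form. By Weyl scaling (Axiom~\ref{item-metric-f}), $D_{h^{\phi_n}}=e^{\xi Q\log|(\phi_n^{-1})'|}\cdot D_{h\circ\phi_n^{-1}}$, and since $|(\phi_n^{-1})'|\to|\alpha|$ and $\phi_n(z)\to z/\alpha$ locally uniformly, passing to the limit in the definition $D_h^{\phi_n}(z,w)=D_{h^{\phi_n}}(\phi_n(z),\phi_n(w))$ yields
\[
D_h^{\phi_n}(z,w) \to |\alpha|^{\xi Q} D_{h(\alpha\cdot)}(z/\alpha,w/\alpha)
\]
jointly with the other convergences, and Axiom~\refcoord\ applied to the affine map $w\mapsto\alpha w$ identifies the right-hand side with $D_h(z,w)$. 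The main obstacle is the TV input underlying the second step: without a quantitative comparison between the laws of $h\circ\phi_n^{-1}$ and $h(\alpha\cdot)$ on compact sets, one cannot upgrade weak convergence of the fields to joint convergence of the metrics, since $h\mapsto D_h$ is not known to be continuous in the distributional topology. Everything else reduces to a clean combination of locality, Weyl scaling, and the affine coordinate change axiom.
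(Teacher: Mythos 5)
Your overall skeleton is the same as the paper's: locally uniform convergence of $\phi_n^{-1}$ to $w\mapsto \alpha w$ via Koebe distortion, a.s.\ distributional convergence of $h\circ\phi_n^{-1}$, a total-variation comparison to upgrade to joint convergence of field/metric pairs, Prokhorov plus measurability of $h\mapsto D_h$ to identify the subsequential limit of the $4$-tuple, and finally Weyl scaling together with Axiom~\refcoord\ for the affine map $w\mapsto \alpha w$ to rewrite the limit as $D_h$. Those steps are all sound and match the structure of Lemmas~\ref{lem-tv-conv}--\ref{lem-field-metric-converge0} and the proof of Lemma~\ref{lem-field-metric-converge}.

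The genuine gap is the step you yourself flag as ``the main obstacle'': the total-variation input is not proved, and as you state it it is false. For each fixed $n$, the field $h\circ\phi_n^{-1}$ is pinned by the vanishing of the pushforward under $\phi_n$ of the functional $g\mapsto (g,\BB f)$, while $h(\alpha\cdot)$ is pinned by a different linear functional; each law is supported on the corresponding affine ``hyperplane'' of distributions, and a Gaussian law on one such hyperplane assigns zero mass to the other. Hence the laws of $(h\circ\phi_n^{-1})|_V$ and $h(\alpha\cdot)|_V$ are mutually singular and their total-variation distance is $1$ for every $n$; no quantitative estimate can fix this without recentering. The paper's Lemma~\ref{lem-tv-conv} therefore proves TV convergence only for the recentered field $h\circ\phi_n^{-1} - (h\circ\phi_n^{-1})_{\BB f,|\alpha|}(0)$, and the proof is not soft: it uses the Markov decomposition $h|_{U_n} = \rng h_n + \frk h_n$, conformal invariance of the zero-boundary GFF (TV convergence modulo additive constant, from~\cite[Proposition 2.10]{ig4}), a Dirichlet-energy estimate showing the harmonic part $\frk h_n\circ\phi_n^{-1}$ is small on compacts, and a Radon--Nikodym (Cameron--Martin) argument to absorb it. One must then separately show that the recentering constant $(h\circ\phi_n^{-1})_{\BB f,|\alpha|}(0)$ tends to $0$ in law (this is exactly why the smooth bump $\BB f$ is used: the a.s.\ distributional convergence applies to it) and use Axiom~\ref{item-metric-f} to get continuity of $c\mapsto D_{h+c}$; both steps are missing from your write-up. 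A smaller point: Axiom~\ref{item-metric-local} gives that the \emph{internal} metric $D_h(\cdot,\cdot;V)$ is determined by $h|_V$, not that the restriction of $D_h$ to a compact set is determined by the field on a slightly larger compact set; the correct way to run your second step (and what the paper does) is to transfer the TV convergence to the pairs $(h|_V, D_h(\cdot,\cdot;V))$ for large bounded $V$ and then let $V$ increase to $\BB C$.
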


The main difficulty in the proof of Lemma~\ref{lem-field-metric-converge} is comparing the metrics $D_{h\circ\phi_n^{-1}}$ and $D_{h(\alpha\cdot)}$.  
We will accomplish this using the outline discussed in Section~\ref{sec-outline}. 
We first need the following elementary lemma for the GFF. 

\begin{lem} \label{lem-tv-conv}
Let $\phi_n : U_n\rta\phi_n(U_n)$ be as in Lemma~\ref{lem-field-metric-converge}. 
Then for each compact set $K\subset\BB C$,  
\eqb \label{eqn-as-conv}
 h\circ\phi_n^{-1} |_K \rta h(\alpha\cdot)|_K,\quad\text{almost surely in the distributional sense}
\eqe
and
\eqb \label{eqn-tv-conv}
\left( h\circ\phi_n^{-1}  - (h\circ\phi_n^{-1})_{\BB f , |\alpha| }(0) \right)|_K \rta h(\alpha  \cdot)|_K ,\quad \text{in the total variation sense} .
\eqe  
\end{lem}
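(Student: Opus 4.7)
My plan is to prove the two statements separately, underpinning both by first upgrading the hypotheses on $\phi_n$ to smooth compact-open convergence $\phi_n \to (z \mapsto z/\alpha)$. Since each $\phi_n$ is univalent with $\phi_n(0) = 0$ and $\phi_n'(0) \to 1/\alpha$, the Koebe distortion theorem gives a uniform bound on $\phi_n$ over each compact $L \subset \BB C$ (for $n$ large enough that $L \subset U_n$). Montel's theorem combined with Hurwitz then forces every subsequential limit along compact-open convergence to be a univalent entire map $\phi$ with $\phi(0) = 0$ and $\phi'(0) = 1/\alpha$; since entire univalent functions are affine, $\phi(z) = z/\alpha$, so the full sequence converges, and by holomorphicity the convergence is in all derivatives. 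In particular, for any compact $K \subset \BB C$, eventually $K \subset \phi_n(U_n)$ (a winding-number argument using the uniform convergence of $\phi_n$ on the boundary of a slightly larger disk) and $\phi_n^{-1} \to (w \mapsto \alpha w)$ smoothly on $K$.

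For~\eqref{eqn-as-conv}, I would use the distributional change-of-variables formula: for $\psi \in C_c^\infty(K)$,
\[
(h \circ \phi_n^{-1}, \psi) = (h, \; |\phi_n'|^2 \, \psi \circ \phi_n).
\]
The test functions on the right are eventually supported in a common compact subset of $\BB C$ and converge to $|\alpha|^{-2}\psi(\cdot/\alpha)$ in the topology of $C_c^\infty$, so the pairing against the fixed distribution $h$ converges a.s.\ to $(h, |\alpha|^{-2}\psi(\cdot/\alpha)) = (h(\alpha\cdot), \psi)$. A union bound over $\psi$ in a countable dense subset of $C_c^\infty(K)$ then gives the a.s.\ distributional convergence.

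For~\eqref{eqn-tv-conv}, my plan is to view both sides as Gaussian random distributions on $K$ modulo additive constants: the subtraction of $(h\circ\phi_n^{-1})_{\BB f,|\alpha|}(0)$ is precisely what makes the LHS depend only on $h$ modulo constants, which is the natural quotient in which to compare it in total variation against $h(\alpha\cdot)$. I would fix a bounded open $W \supset \phi_n^{-1}(K) \cup \alpha K$ valid for all large $n$ (possible by the smooth convergence above), express both sides as deterministic linear functions of $h|_W$, and use the Markov decomposition $h|_W = \rng h^W + \frk h^W$ with $\rng h^W$ a zero-boundary GFF on $W$ to realize each side as a centered Gaussian measure on a fixed space of distributions on $K$. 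Then, via the explicit whole-plane GFF covariance kernel and the smooth convergence of $\phi_n^{-1}$ (with derivatives), I would show the covariance operators converge in a Hilbert--Schmidt norm relative to the limit and invoke the Feldman--Hajek criterion to upgrade this to TV convergence.

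The hard part will be this covariance comparison. The GFF covariance $G(u,v) \sim -\log|u-v|$ has a logarithmic singularity on the diagonal, and although $G(\phi_n^{-1}(u), \phi_n^{-1}(v))$ and $G(\alpha u, \alpha v)$ share this singularity, one must exploit the smoothness of $\phi_n^{-1}(w) - \alpha w$---including higher derivatives---to cancel the singular contributions and bound the difference in an operator norm strong enough for Feldman--Hajek. If this direct approach becomes unwieldy, a possible alternative is to bound the Radon--Nikodym density of the LHS law relative to the RHS law directly, using that the difference between the two Gaussians can be realized as the image under a fixed linear operator of a Gaussian perturbation whose covariance is small in trace norm.
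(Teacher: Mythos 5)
Your treatment of \eqref{eqn-as-conv} is essentially the paper's: Koebe distortion plus Cauchy estimates give locally uniform convergence of $\phi_n,\phi_n^{-1}$ and all derivatives to the affine limits, and then $(h\circ\phi_n^{-1},\psi)=(h,|\phi_n'|^2\,\psi\circ\phi_n)$ converges because the test functions converge in $C_c^\infty$ with uniformly compact supports. That part is fine (the countable-density step is not even needed, since $h$ is a.s.\ a continuous functional on test functions).

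The total variation statement \eqref{eqn-tv-conv} is where your proposal has a genuine gap. The step ``show the covariance operators converge in a Hilbert--Schmidt norm relative to the limit and invoke Feldman--Hajek'' is not, as stated, a valid inference: Feldman--Hajek is a dichotomy (equivalence versus singularity) for a \emph{pair} of Gaussian measures, not a convergence criterion, so you would need quantitative Gaussian Hellinger/TV bounds in terms of the spectrum of the relative covariance $C_\infty^{-1/2}(C_n-C_\infty)C_\infty^{-1/2}$, and establishing smallness of that object is precisely the entire difficulty. The cancellation of the log singularity is the easy part: the kernel difference is essentially $-\log\bigl(|\phi_n^{-1}(u)-\phi_n^{-1}(v)|/|\alpha(u-v)|\bigr)$ plus smooth normalization terms, which is smooth and small in every $C^k(K\times K)$ norm. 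But smallness of the kernel difference does not give smallness after conjugation by $C_\infty^{-1/2}$, which is an unbounded operator; and for the \emph{whole-plane} GFF restricted to $K$ the inverse covariance and Cameron--Martin space are not the standard $H^1_0(K)$ objects (they involve harmonic extension outside $K$), so even writing down the norm in which the perturbation must be small requires real work that your outline does not supply. In other words, you have reduced the lemma to a statement (relative HS smallness, hence asymptotic equivalence in the Cameron--Martin topology) that is at least as hard as the lemma itself, and you acknowledge but do not close this step.

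For contrast, the paper avoids any covariance analysis: it writes $h|_{U_n}=\rng h_n+\frk h_n$ with $\rng h_n$ a zero-boundary GFF on $U_n$, uses exact conformal invariance so that $\rng h_n\circ\phi_n^{-1}$ is a zero-boundary GFF on $\phi_n(U_n)$, quotes the known total variation convergence of zero-boundary GFFs on growing domains to the whole-plane GFF modulo additive constant (\cite[Proposition~2.10]{ig4}), fixes the constant with the smooth average $(\cdot)_{\BB f,|\alpha|}(0)$, and then absorbs the harmonic remainder $\frk h_n\circ\phi_n^{-1}$ by the explicit Cameron--Martin Radon--Nikodym derivative $\exp\bigl((\cdot,f)_\nabla-\tfrac12(f,f)_\nabla\bigr)$, using only that the Dirichlet energy of the (cut-off) harmonic part tends to $0$ in probability on compacts. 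If you want to salvage your route, the realistic fix is to adopt this decomposition: compare the zero-boundary parts by conformal invariance plus the cited TV result, and handle the harmonic correction by a Cameron--Martin/Girsanov bound rather than a global covariance comparison on $K$.
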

\begin{proof}
By the Koebe distortion theorem, $\phi_n'(z) \rta 1/\alpha$ uniformly on compact subsets of $\BB C$. 
It follows that $\phi_n(z) \rta z/\alpha$ uniformly on compact subsets of $\BB C$. 
By the Cauchy integral formula, all of the higher-order derivatives of $\phi_n$ converge to zero uniformly on compact subsets of $\BB C$. 
Furthermore, $\phi_n^{-1} \rta \alpha z$, $(\phi_n^{-1})'(z) \rta \alpha $, and all of the higher-order derivatives of $\phi_n^{-1}$ converge to zero uniformly on compact subsets of $\BB C$. 

Consequently, if $f : \BB C\rta\BB R$ is a smooth, compactly supported function then $f\circ \phi_n$ and all of its derivatives of all orders converge uniformly to $f(\alpha\cdot)$ and its corresponding derivatives as $n\rta\infty$. 
Therefore, 
\eqb
 ( h\circ\phi_n^{-1} , f) = (h , |\phi_n'|^2 (f \circ\phi_n) )  \rta (h ,   |\alpha|^{-2} f(\alpha^{-1} \cdot)  )  = (h(\alpha \cdot) , f) .
\eqe 
This gives~\eqref{eqn-as-conv}. 

To prove~\eqref{eqn-tv-conv}, write $h|_{U_n} = \rng h_n + \frk h_n$, where $\rng h_n$ is a zero-boundary GFF on $U_n$ and $\frk h_n$ is an independent random harmonic function on $U_n$. Then $\rng h_n \circ\phi_n^{-1}$ is a zero-boundary GFF on $\phi(U_n)$. 
By~\cite[Proposition 2.10]{ig4}, we have $\rng h_n\circ\phi_n^{-1}|_K \rta h|_K$ in the total variation sense if we view both $\rng h_n$ and $h$ as being defined modulo a global additive constant. 
The field $h(\alpha \cdot)$ is normalized so that $(h(\alpha \cdot))_{\BB f , |\alpha| }(0)$ is zero. 
Therefore,
\eqb \label{eqn-tv-conv0}
\left(\rng h_n\circ\phi_n^{-1} - (\rng h_n\circ\phi_n^{-1})_{\BB f , |\alpha|}(0)   \right)|_K  \rta h(\alpha \cdot)|_K
\eqe 
in total variation, \emph{without} having to view the distributions as being defined modulo additive constant. 

On the other hand, basic estimates for the harmonic part of the GFF (see the proof of~\cite[Proposition 2.10]{ig4}) combined with the aforementioned convergence of $\phi_n^{-1}$ to $z\mapsto \alpha  z$ shows that for any fixed compact set $K'\subset\BB C$, the Dirichlet energy of $\frk h_n|_{K'}$ tends to zero in probability as $n\rta\infty$. 
Combining this with the convergence of $\phi_n^{-1}$ and all of its derivatives mentioned above, we get that the same is true with $\frk h_n\circ\phi_n^{-1}$ in place of $\frk h_n$. 

Recall that if $f$ is a smooth compactly supported bump function on $U_n$, then the laws of $\rng h_n \circ\phi_n^{-1}$ and $\rng h_n \circ\phi_n^{-1} + f$ are mutually absolutely continuous, and the Radon-Nikodym derivative of the latter with respect to the former is $\exp\left( (\rng h_n \circ\phi_n^{-1} ,f)_\nabla - \frac12 (f,f)_\nabla \right)$, where $(g_1,g_2)_\nabla := \frac{1}{2\pi} \int_{U_n} \nabla g_1(z) \cdot\nabla g_2(z) \, d^2 z$ denotes the Dirichlet inner product. 
By applying this formula with $f$ equal to a smooth, compactly supported bump function times $\frk h_n\circ\phi_n^{-1} - (\frk h_n\circ\phi_n^{-1})_{\BB f , |\alpha|} $, we obtain~\eqref{eqn-tv-conv} from~\eqref{eqn-tv-conv0} and the preceding paragraph. 
\end{proof}

We can now establish the convergence of the second two coordinates in Lemma~\ref{lem-field-metric-converge}.

\begin{lem} \label{lem-field-metric-converge0}
Let $\phi_n : U_n\rta\phi_n(U_n)$ be as in Lemma~\ref{lem-field-metric-converge}. 
Then
\eqb \label{eqn-field-metric0}
\left(  h^{\phi_n}  , D_h^{\phi_n} \right) 
\rta \left(  h(\alpha \cdot) + Q\log|\alpha| , D_h \right) 
\eqe
in law with respect to the distributional topology on the first coordinate and the local uniform topology on $\BB C\times\BB C$ on the second coordinate.
\end{lem}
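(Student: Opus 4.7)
My plan is to treat the two coordinates in sequence and then combine them via Axiom~\refcoord. For the first coordinate, write $h^{\phi_n}=h\circ\phi_n^{-1}+g_n$ with $g_n := Q\log|(\phi_n^{-1})'|$ deterministic. Since $(\phi_n^{-1})' \to \alpha$ locally uniformly (already observed in the proof of Lemma~\ref{lem-tv-conv}), $g_n$ converges uniformly on compact sets to the constant $Q\log|\alpha|$. Combined with the a.s.\ distributional convergence~\eqref{eqn-as-conv}, this gives $h^{\phi_n} \to \tilde h := h(\alpha\cdot) + Q\log|\alpha|$ almost surely in the distributional topology on compacts.

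The main content is the convergence of the metric coordinate. My plan is first to show $(h^{\phi_n}, D_{h^{\phi_n}}) \to (\tilde h, D_{\tilde h})$ in law (with the second coordinate in the local uniform topology), and then to transfer to the claimed statement using the identity $D_h(z,w) = D_{\tilde h}(z/\alpha, w/\alpha)$ from Axiom~\refcoord\ with $a=\alpha$, together with the locally uniform convergence $\phi_n(z) \to z/\alpha$ and the continuity of $D_{\tilde h}$. To establish the intermediate convergence on a bounded open set $V \subset \BB C$, I would use the total variation statement~\eqref{eqn-tv-conv} to couple $(h\circ\phi_n^{-1} - c_n)|_V$ with the corresponding representative of $h(\alpha\cdot)|_V$, where $c_n := (h\circ\phi_n^{-1})_{\BB f,|\alpha|}(0)$, so that the two restrictions are equal on an event of probability tending to one. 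By the locality axiom, on this event $D_{h^{\phi_n}}(\cdot,\cdot;V)$ is determined by $h^{\phi_n}|_V = h(\alpha\cdot)|_V + c_n + g_n|_V$, which differs from $\tilde h|_V = h(\alpha\cdot)|_V + Q\log|\alpha|$ by the function $c_n + (g_n - Q\log|\alpha|)$. Weyl scaling (Axiom~III) then converts this additive discrepancy into a multiplicative factor $e^{\xi(c_n + g_n - Q\log|\alpha|)}$. Since $g_n - Q\log|\alpha| \to 0$ uniformly on $V$, and $c_n \to 0$ in probability (because under the coupling the defining relation $c_n = (h\circ\phi_n^{-1})_{\BB f,|\alpha|}(0)$ equates $c_n$ with the smooth average of the coupled $h(\alpha\cdot)$, which is zero by the normalization implicit in~\eqref{eqn-tv-conv}), this factor tends to one and yields $D_{h^{\phi_n}}(\cdot,\cdot;V) \to D_{\tilde h}(\cdot,\cdot;V)$ in law. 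Exhausting $\BB C$ by a sequence of such $V$ then gives convergence of the full metrics.

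The hardest point is the bookkeeping of the random constant $c_n$ in the TV coupling: a priori it need not be small, and only the consistency of the coupling with the smooth-average normalization forces $c_n \to 0$. A minor secondary issue is that convergence of internal metrics on each bounded $V$ must be upgraded to convergence in the local uniform topology on $\BB C \times \BB C$; this follows from Axioms~I and II by choosing $V$ large enough that, with high probability, any near-geodesic between two points of a fixed compact set stays inside $V$.
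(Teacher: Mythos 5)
Your overall plan matches the paper's proof almost step for step: use the total variation coupling from~\eqref{eqn-tv-conv} together with the locality axiom to transfer convergence of fields to convergence of internal metrics on a bounded $V$, then remove the additive normalization $c_n := (h\circ\phi_n^{-1})_{\BB f,|\alpha|}(0)$ via Weyl scaling, let $V \uparrow \BB C$, and finally use the uniform convergence $\phi_n(z)\to z/\alpha$, $(\phi_n^{-1})'\to\alpha$ together with Axiom~\refcoord\ to arrive at the stated limit. The only structural difference is that you carry $g_n := Q\log|(\phi_n^{-1})'|$ along from the start rather than adding it at the end, which is fine.

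However, your justification that $c_n\to 0$ in probability is circular. Under the TV coupling, the good event is $(h\circ\phi_n^{-1} - c_n)|_V = h'(\alpha\cdot)|_V$ for a coupled field $h'$ (with law $h$). Taking the smooth average of both sides at scale $|\alpha|$ yields $(h\circ\phi_n^{-1})_{\BB f,|\alpha|}(0) - c_n = (h'(\alpha\cdot))_{\BB f,|\alpha|}(0) = 0$, which reads $c_n - c_n = 0$: the coupling identifies the \emph{normalized} field with $h'(\alpha\cdot)$, so it places no constraint whatsoever on $c_n$. The argument you need is the one in the paper: since $\BB f$ is smooth and compactly supported, $c_n$ is the pairing of $h\circ\phi_n^{-1}$ against the fixed test function $|\alpha|^{-2}\BB f(\cdot/|\alpha|)$, and the a.s.\ distributional convergence~\eqref{eqn-as-conv} immediately gives $c_n \to (h(\alpha\cdot))_{\BB f,|\alpha|}(0) = h_{\BB f,1}(0) = 0$ almost surely. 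You already invoke~\eqref{eqn-as-conv} for the first coordinate, so the fix is to apply the same fact here; but as written, the claimed derivation of $c_n \to 0$ from the coupling does not hold.
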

\begin{proof}
Consider a large bounded open set $V\subset \BB C$. Since $D_h(\cdot,\cdot; V)$ is a deterministic functional of $h|_V$ (Axiom~\ref{item-metric-local}), the total variation convergence in Lemma~\ref{lem-tv-conv} implies that 
\eqb \label{eqn-field-metric-tv}
\left( \left( h\circ\phi_n^{-1}  - (h\circ\phi_n^{-1})_{\BB f , |\alpha| }(0) \right) |_V , D_{ h\circ\phi_n^{-1}  - (h\circ\phi_n^{-1})_{\BB f , |\alpha| }(0)  }(\cdot,\cdot ; V) \right) 
\rta \left( h(\alpha \cdot)|_V , D_{h(\alpha \cdot)}(\cdot,\cdot; V) \right) 
\eqe
in the total variation sense. Since the function $\BB f$ of~\eqref{eqn-smooth-normalize} is smooth and compactly supported, we can apply~\eqref{eqn-as-conv} of Lemma~\ref{lem-tv-conv} to get that
\eqb \label{eqn-normalization-to-0}
(h\circ\phi_n^{-1})_{\BB f , |\alpha| }(0) 
\rta (h(\alpha \cdot))_{\BB f , |\alpha| }(0) 
= h_{\BB f , 1}(0) = 0 
\eqe 
in law as $n\rta\infty$. 

By combining~\eqref{eqn-field-metric-tv} and~\eqref{eqn-normalization-to-0} (and using Axiom~\ref{item-metric-f} to get that the map $c\mapsto D_{h+c}$ is continuous), then letting $V$ increase to all of $\BB C$, we obtain 
\eqb \label{eqn-field-metric-law}
\left(  h\circ\phi_n^{-1}    , D_{ h\circ\phi_n^{-1}   }  \right) 
\rta \left( h(\alpha \cdot)  , D_{h(\alpha \cdot)} \right) 
\eqe
in law.
 
Recall that $h^{\phi_n} = h\circ\phi_n^{-1} + Q\log|(\phi_n^{-1})'|$ and $D_h^{\phi_n}  =D_{h^{\phi_n}}(\phi_n(\cdot),\phi_n(\cdot))$.  
We have $\phi_n(z) \rta \alpha^{-1} z$ and $Q\log|(\phi_n^{-1})'| \rta Q\log|\alpha |$ uniformly on compact subsets of $\BB C$ (see the beginning of the proof of Lemma~\ref{lem-tv-conv}). 
Combining this with~\eqref{eqn-field-metric-law} and using Axiom~\ref{item-metric-f} to deal with the convergence of the metrics shows that
\eqb \label{eqn-field-metric-law'}
\left(  h^{\phi_n}   , D_h^{\phi_n}  \right) 
\rta \left( h(\alpha \cdot) + Q\log|\alpha|  , D_{h(\alpha \cdot) + Q\log|\alpha |}(\alpha^{-1} \cdot,\alpha^{-1} \cdot) \right)  
\eqe
in law. The right side of~\eqref{eqn-field-metric-law'} equals $\left( h(\alpha \cdot) + Q\log|\alpha| , D_h \right) $ by Axiom~\refcoord. 
\end{proof}

\begin{proof}[Proof of Lemma~\ref{lem-field-metric-converge}]
By Lemma~\ref{lem-field-metric-converge0} and the Prokhorov theorem, for any sequence of $n$'s tending to $\infty$, there is a subsequence $\mcl N$ and a coupling $\left( h  , D_h  ,h'  , D_{h'} \right)$ of two whole-plane GFF's and their associated metrics such that as $\mcl N \ni n\rta\infty$, 
\eqbn
\left( h , D_h , h^{\phi_n}  , D_h^{\phi_n} \right)  
\rta \left( h , D_h , h'(\alpha \cdot) + Q\log|\alpha| , D_{h'} \right)  ,
\eqen
in law. By the a.s.\ convergence part of Lemma~\ref{lem-tv-conv}, we have $(h , h^{\phi_n})  \rta (h, h(\alpha \cdot) + Q\log|\alpha| ) $ in law. 
Hence $h'=h$ a.s., so also $D_{h'}  =D_h$ a.s. 
Therefore our subsequential limit is given by the right side of~\eqref{eqn-field-metric}.
Since our initial choice of subsequence was arbitrary, we obtain the statement of the lemma.
\end{proof}

\subsection{Uniform comparison of $D_h$ and $D_h^\phi$}
\label{sec-scaled-metric-uniform}

Continue to assume that $h$ is a whole-plane GFF normalized so that $h_{\BB f , 1}(0) = 0$.  To deduce Proposition~\ref{prop-scaled-metric-uniform} from Lemma~\ref{lem-field-metric-converge}, we need to re-scale to convert from a statement about conformal maps at small scales to a statement about conformal maps which are close to linear at constant-order scales; and we need to ensure that the estimate we obtain is uniform over all $z\in U$. 
The re-scaling will be accomplished by means of the following basic calculation.

\begin{lem} \label{lem-coord-scale}
Fix $r > 0$ and $z\in\BB C$ and let $\wt h := h(r\cdot + z) - h_{\BB f ,r}(z)$, so that $\wt h \eqD h$. 
Also let $\wt\phi : r^{-1}(U-z)  \rta r^{-1}(\phi(U) -z)$ be defined by $\wt\phi(w) = r^{-1}(\phi(r w + z) - z)$. 
Then
\eqb
D_{\wt h}^{\wt\phi}( u , v) = r^{-\xi Q} e^{-\xi h_{\BB f , r}(z)} D_h^\phi(r u  + z , r v + z)  ,\quad\forall u ,v\in r^{-1}(U-z) .
\eqe
\end{lem}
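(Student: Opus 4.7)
The plan is to unpack the definitions of $D^\phi$, $\wt h$, and $\wt\phi$ on both sides, and then reduce the identity to one application of the affine coordinate change axiom (Axiom~\refcoord) combined with one application of Weyl scaling by a constant (Axiom~\ref{item-metric-f}). No subtle probability is involved; this is essentially a bookkeeping lemma.

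First I would compute $\wt\phi^{-1}$ directly from $\wt\phi(w) = r^{-1}(\phi(rw+z)-z)$. Solving $y = \wt\phi(w)$ gives $\wt\phi^{-1}(y) = r^{-1}(\phi^{-1}(ry+z)-z)$, and the chain rule yields $(\wt\phi^{-1})'(y) = (\phi^{-1})'(ry+z)$. Combining this with $\wt h(w) = h(rw+z) - h_{\BB f,r}(z)$ and the definition of $\wt h^{\wt\phi}$ from~\eqref{eqn-coord-field-def}, I obtain the compact identity
\eqbn
\wt h^{\wt\phi}(y) = h^\phi(ry+z) - h_{\BB f,r}(z).
\eqen

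Next I apply Axiom~\refcoord to the field $h^\phi$ with the affine map $y \mapsto ry+z$ to obtain
\eqbn
D_{h^\phi}(ry_1+z,\, ry_2+z) = D_{h^\phi(r\,\cdot\, + z) + Q\log r}(y_1, y_2).
\eqen
Since $h^\phi(r\,\cdot\,+z) + Q\log r$ differs from $\wt h^{\wt\phi}$ by the additive constant $Q\log r + h_{\BB f,r}(z)$, Axiom~\ref{item-metric-f} applied to this constant converts the right-hand side into $r^{\xi Q} e^{\xi h_{\BB f,r}(z)} D_{\wt h^{\wt\phi}}(y_1, y_2)$. Finally I specialize to $y_1 = \wt\phi(u)$, $y_2 = \wt\phi(v)$ and observe that $r\wt\phi(u)+z = \phi(ru+z)$ (and similarly for $v$). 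By the definition of $D^\phi$, the left-hand side becomes $D_h^\phi(ru+z, rv+z)$, and rearranging yields the claimed identity.

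The only place that requires any care is tracking the additive normalization $h_{\BB f,r}(z)$ (the constant that makes $\wt h \eqD h$) and combining it correctly with the $Q\log r$ factor arising from the affine change of coordinates, so that together they produce the prefactor $r^{-\xi Q} e^{-\xi h_{\BB f,r}(z)}$. Beyond this, every step is a direct invocation of the axioms listed in Definition~\ref{def-lqg-metric}.
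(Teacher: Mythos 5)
Your proposal is correct and takes essentially the same route as the paper: both reduce the identity to the bookkeeping fact $\wt h^{\wt\phi}(y) = h^\phi(ry+z) - h_{\BB f,r}(z)$ (together with $\wt\phi(u) = r^{-1}(\phi(ru+z)-z)$) followed by one application of Axiom~{\refcoord} and one application of Axiom~\ref{item-metric-f} with a constant function. The paper merely performs the same two axiom applications starting from $D_{\wt h}^{\wt\phi}(u,v)$ and reading the chain forward, whereas you start from $D_{h^\phi}$ and rearrange at the end; the content is identical.
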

\begin{proof}   
Recall the definition of $h^\phi$ from~\eqref{eqn-coord-field-def}. 
We apply Axiom~\ref{item-metric-f} and then Axiom~\refcoord\, to $h^\phi$ to get that for $u,v \in r^{-1}(U-z)$, 
\alb
D_{\wt h}^{\wt\phi}(u,v) 
&= D_{h(   \phi^{-1}(r\cdot + z) )  + Q\log|(\phi^{-1})'(r\cdot + z)|     - h_{\BB f , r}(z) }\left(r^{-1} (\phi(r u + z) - z) , r^{-1} ( \phi(r v + z)  -z) \right) \notag\\ 
&= e^{-\xi h_{\BB f , r}(z)} D_{h^\phi(r\cdot + z)}\left(r^{-1} (\phi(r u + z) - z) , r^{-1} ( \phi(r v + z)  -z) \right) \quad \text{(Axiom~\ref{item-metric-f})} \notag\\
&= r^{-\xi Q} e^{-\xi h_{\BB f , r}(z)} D_{h^\phi } \left( \phi(r u + z) , \phi(r v + z) \right)  \quad \text{(Axiom~\refcoord)}  \notag \\
&= r^{-\xi Q} e^{-\xi h_{\BB f , r}(z)} D_h^\phi(r u  + z , r v + z) \quad \text{(by the definition~\eqref{eqn-coord-field-def} of $D_h^\phi$)}.
\ale 
\end{proof}

In what follows, we fix $\delta > 0$ and for $z\in U$ and $r> 0$ such that $B_r(z)\subset U$, we let
\eqb \label{eqn-scaled-metric-event}
F_r(z) := \left\{ \sup_{u,v\in B_r(z)} | D_h^\phi(u,v)  -  D_h(u,v) | \leq \delta r^{\xi Q} e^{\xi h_{\BB f , r}(z)} \right\}
\eqe
be the event of Proposition~\ref{prop-scaled-metric-uniform}.

\begin{lem} \label{lem-scaled-metric-conv}
Consider a sequence of points $\{z_n\}_{n\in\BB N} \subset U$ and radii $\{r_n\}_{n\in\BB N}$ such that $z_n\rta z\in U$ and $r_n\rta 0$.
In the notation of Proposition~\ref{prop-scaled-metric-uniform}, we have $\lim_{n\rta\infty} \BB P[F_{r_n}(z_n)] = 1$. 
\end{lem}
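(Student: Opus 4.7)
The plan is to reduce the statement to Lemma~\ref{lem-field-metric-converge} by a scaling argument in the spirit of Lemma~\ref{lem-coord-scale}. Define
\eqbn
\wt h_n := h(r_n\cdot + z_n) - h_{\BB f , r_n}(z_n),
\eqen
so that $\wt h_n \eqD h$, and define the conformal maps $\wt\phi_n : r_n^{-1}(U - z_n) \rta r_n^{-1}(\phi(U) - \phi(z_n))$ by $\wt\phi_n(w) := r_n^{-1}(\phi(r_n w + z_n) - \phi(z_n))$. Since $z_n \rta z \in U$ and $r_n \rta 0$, every compact subset of $\BB C$ lies in the domain of $\wt\phi_n$ for large enough $n$. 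Moreover $\wt\phi_n(0) = 0$ and $\wt\phi_n'(0) = \phi'(z_n) \rta \phi'(z) =: 1/\alpha$, which is a nonzero complex number.

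A calculation nearly identical to the proof of Lemma~\ref{lem-coord-scale}---but with the codomain recentered at $\phi(z_n)$ in place of $z_n$, so that $\wt h_n^{\wt\phi_n}(w) = h^\phi(r_n w + \phi(z_n)) - h_{\BB f , r_n}(z_n)$, followed by Axioms~\ref{item-metric-f} and~\refcoord---gives
\eqbn
D_{\wt h_n}^{\wt\phi_n}(u,v) = r_n^{-\xi Q} e^{-\xi h_{\BB f,r_n}(z_n)} D_h^\phi(r_n u + z_n, r_n v + z_n),
\eqen
and analogously $D_{\wt h_n}(u,v) = r_n^{-\xi Q} e^{-\xi h_{\BB f,r_n}(z_n)} D_h(r_n u + z_n, r_n v + z_n)$. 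Under the change of variables $w \mapsto r_n w + z_n$, which sends $B_1(0)$ onto $B_{r_n}(z_n)$, the event $F_{r_n}(z_n)$ is therefore identical to
\eqbn
\wt F_n := \left\{ \sup_{u,v \in B_1(0)} | D_{\wt h_n}^{\wt\phi_n}(u,v) - D_{\wt h_n}(u,v) | \leq \delta \right\}.
\eqen

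The sequence $\wt\phi_n$ satisfies all of the hypotheses of Lemma~\ref{lem-field-metric-converge}, so applying that lemma to $\wt h_n \eqD h$ yields the joint convergence in law
\eqbn
(\wt h_n, D_{\wt h_n}, \wt h_n^{\wt\phi_n}, D_{\wt h_n}^{\wt\phi_n}) \rta (h, D_h, h(\alpha\cdot), D_h).
\eqen
Since the second and fourth coordinates of the limit are literally the same random metric, the continuous mapping theorem gives that $D_{\wt h_n}^{\wt\phi_n} - D_{\wt h_n} \rta 0$ in law uniformly on $B_1(0) \times B_1(0)$. Hence $\BB P[\wt F_n] \rta 1$, which by the scaling identities above is the same as $\BB P[F_{r_n}(z_n)] \rta 1$.

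The only delicate technical point is the need for $\wt\phi_n(0) = 0$ so that Lemma~\ref{lem-field-metric-converge} applies. This forces us to subtract $\phi(z_n)$ rather than $z_n$ when rescaling the codomain of $\phi$, which is a minor variant of Lemma~\ref{lem-coord-scale}; the Axiom~\refcoord\, computation goes through unchanged because we are still conjugating by an affine transformation.
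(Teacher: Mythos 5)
Your proof is correct and follows essentially the same route as the paper: rescale by $r_n$, translate by $z_n$, apply Lemma~\ref{lem-coord-scale} to convert $F_{r_n}(z_n)$ into an event about the rescaled metrics on $B_1(0)$, and then invoke Lemma~\ref{lem-field-metric-converge}. You correctly noticed a slip in the paper's write-up: as stated, the paper's $\phi_n(w) = r_n^{-1}(\phi(r_n w + z_n) - z_n)$ does not satisfy $\phi_n(0)=0$ unless $\phi(z_n)=z_n$, and your fix (recentering the codomain at $\phi(z_n)$, which leaves the Lemma~\ref{lem-coord-scale} identity unchanged since the extra shift is absorbed by an affine coordinate change) is exactly what is needed for the hypotheses of Lemma~\ref{lem-field-metric-converge} to hold.
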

\begin{proof}
For $n\in\BB N$, define the conformal map
\eqbn
\phi_n : r_n^{-1} (U_n - z_n) \rta r_n^{-1}(\phi(U_n) - z_n) \quad \text{by} \quad \phi_n(w) = r_n^{-1} \left( \phi (r_n w + z_n) - z_n \right)
\eqen 
Then $\phi_n(0) = 0$, $\phi_n'(0) = \phi'(z_n) \rta \phi'(z)$, and (since $r_n\rta 0$ and $z$ lies at positive distance from $\bdy U$) every compact subset of $\BB C$ is contained in $r_n^{-1} (U_n - z_n)$ for large enough $n$. 
 
Define the field $h_n := h(r_n \cdot+z_n) - h_{\BB f , r_n}(z_n) \eqD h$. 
By Lemma~\ref{lem-field-metric-converge} applied with $h_n$ in place of $h$ and $\alpha = 1/\phi_n'(z)$, we have the convergence of joint laws
\eqb
\left( h_n , D_{h_n} , h_n^{\phi_n} , D_{h_n}^{\phi_n}  \right) \rta \left( h,D_h,h(\cdot/\phi'(z)) + Q\log |1/\phi'(z)| ,D_h \right) . 
\eqe
In particular, it holds with probability tending to 1 as $n\rta\infty$ that
\eqb \label{eqn-scaled-metric-close}
  | D_{h_n}^{\phi_n}(u,v)  -  D_{h_n}(u,v) | \leq \delta  ,\quad \forall u,v\in \BB D  .
\eqe

By Lemma~\ref{lem-coord-scale} along with Axioms~\ref{item-metric-f} and~\refcoord\, for $h$, if $r_n$ is sufficiently large that $\BB D\subset r_n^{-1}(U-z_n)$, then 
\allb \label{eqn-scaled-metric-compare}
&D_{h_n}(u,v) =  r_n^{-\xi Q} e^{-\xi h_{\BB f , r_n}(z_n)} D_h(r_n u  + z_n , r_n v + z_n) \quad \text{and}  \notag \\ 
&\qquad D_{h_n}^{\phi_n}(u,v) = r_n^{-\xi Q} e^{-\xi h_{\BB f , r_n}(z_n)} D_h^\phi(r_n u  + z_n , r_n v + z_n) ,
\quad \forall u ,v \in \BB D .
\alle
Combining~\eqref{eqn-scaled-metric-close} with~\eqref{eqn-scaled-metric-compare} gives the statement of the lemma.
\end{proof}

\begin{proof}[Proof of Proposition~\ref{prop-scaled-metric-uniform}]
Assume by way of contradiction that for some compact set $K\subset U$, the relation~\eqref{eqn-scaled-metric-uniform} fails.
Then there is an $\ep > 0$, a sequence $r_n\rta 0$, and a sequence of points $z_n \in K$ such that $\BB P[F_{r_n}(z_n)]\leq 1-\ep$ for every $n\in\BB N$. 
By possibly passing to a subsequence, we can assume without loss of generality that $z_n\rta z\in K$. 
Then Lemma~\ref{lem-scaled-metric-conv} shows that $\lim_{n\rta\infty} \BB P[F_{r_n}(z_n)]  = 1$, which is a contradiction. 
\end{proof}

\section{Proof of Theorem~\ref{thm-coord}}
\label{sec-coord-proof}

Recall the notation $D_h^\phi$ from~\eqref{eqn-coord-field-def}. 
To prove Theorem~\ref{thm-coord}, we want to upgrade from the statement that $D_{h|_U}$ and $D_h^\phi$ are close with high probability at small scales (Proposition~\ref{prop-scaled-metric-uniform}) to the statement that these two metrics are a.s.\ close globally. This will be done using various local independence properties of the GFF. 
In this section we will mostly use circle averages rather than the smoothed average $ h_{\BB f , r}(z)$ of Section~\ref{sec-bilip}. 

\subsection{Bi-Lipschitz equivalence of $D_h$ and $D_h^\phi$}
\label{sec-coord-bilip}

Before establishing that $D_{h|_U}  =D_h^\phi$, we will show that the two metrics are bi-Lipschitz equivalent. 

\begin{prop} \label{prop-coord-bilip} 
Let $h$ be a whole-plane GFF, with any choice of additive constant. 
For each conformal map $\phi : U\rta\phi(U)$, there is a constant $C \geq 1$ such that a.s.\ 
\eqb 
C^{-1} D_{h|_U}(z,w) \leq D_h^\phi(z,w) \leq C D_{h|_U}(z,w) ,\quad\forall z,w\in U .
\eqe
\end{prop}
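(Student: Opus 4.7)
The plan is to invoke the general bi-Lipschitz comparison criterion Theorem~\ref{thm-bilip} (from~\cite{local-metrics}), applied to the pair of local metrics $D_{h|_U}$ and $D_h^\phi$. The two inputs we need are: (i) $D_h^\phi$ is a local metric for $h|_U$ which satisfies the same Weyl scaling as $D_{h|_U}$, and (ii) the small-scale near-equality of $D_h^\phi$ and $D_{h|_U}$ supplied by Lemma~\ref{lem-scaled-metric-ratio}.

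For (i), conformal invariance of the GFF makes $h^\phi = h\circ\phi^{-1} + Q\log|(\phi^{-1})'|$ a GFF plus a continuous function on $\phi(U)$, so $D_{h^\phi}$ is defined. Applying Axiom~\ref{item-metric-local} for $D_{h^\phi}$ on $\phi(V)$ for open $V\subset U$, together with the fact that $h^\phi|_{\phi(V)}$ is determined by $h|_V$, gives that $D_h^\phi(\cdot,\cdot;V)$ is a.s.\ determined by $h|_V$. Weyl scaling for $D_h^\phi$ (with parameter $\xi$) follows from Axiom~\ref{item-metric-f} applied to $h^\phi$ by pre-composing the test function with $\phi$, and $D_{h|_U}$ is a local metric by the discussion preceding Theorem~\ref{thm-coord}.

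For (ii), I would apply Lemma~\ref{lem-scaled-metric-ratio} with any fixed $b,\delta\in(0,1)$: for each $z\in U$,
\[
\lim_{r\rta 0} \BB P\left[ 1-\delta \leq \frac{D_h^\phi(u,v)}{D_h(u,v)} \leq 1+\delta ,\: \forall u,v\in B_r(z) \text{ with } |u-v|\geq br \right] = 1.
\]
When $r$ is small enough that the $D_h$-diameter of $B_r(z)$ is smaller than its $D_h$-distance to $\bdy U$, every $D_h$-quasi-geodesic between two points of $B_r(z)$ stays inside $U$, so $D_h = D_{h|_U}$ on $B_r(z)\times B_r(z)$ and the same bound holds with $D_{h|_U}$ in the denominator. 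This is precisely the kind of local comparability hypothesis that Theorem~\ref{thm-bilip} is designed to upgrade to global bi-Lipschitz equivalence, and invoking it yields a deterministic $C\geq 1$ with $C^{-1} D_{h|_U} \leq D_h^\phi \leq C\, D_{h|_U}$ throughout $U\times U$ a.s.

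The main obstacle I anticipate is matching the precise hypotheses of Theorem~\ref{thm-bilip} --- which is presumably stated for local metrics of the whole-plane GFF defined on all of $\BB C$ --- with the present setting, where $D_h^\phi$ and $D_{h|_U}$ only live on $U$. This can be dealt with by exhausting $U$ by compact subsets, applying the criterion inside each of them, and using the length-space structure (Axiom~\ref{item-metric-length}) together with locality (Axiom~\ref{item-metric-local}) to stitch the local comparisons into a global one. The deterministic nature of the bi-Lipschitz constant is a standard consequence of a 0-1 law based on the scale and translation invariance of the law of $h$ modulo additive constant, which is built into the framework of~\cite{local-metrics}.
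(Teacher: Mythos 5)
Your overall route is the same as the paper's: check that $D_{h|_U}$ and $D_h^\phi$ are jointly local and $\xi$-additive for $h|_U$, and then invoke the bi-Lipschitz criterion Theorem~\ref{thm-bilip}. Step (i) of your argument is fine. The gap is in step (ii): the hypothesis~\eqref{eqn-bilip} of Theorem~\ref{thm-bilip} is not a statement about the ratio $D_h^\phi(u,v)/D_h(u,v)$ for pairs with $|u-v|\geq b r$. It requires, with probability at least a universal constant $p$ and with a single constant $C$ uniform over $z\in K$ and $r\leq r_K$, that $\sup_{u,v\in\bdy B_r(z)} D_h^\phi\left(u,v;\BB A_{r/2,2r}(z)\right) \leq C\, D_{h|_U}\left(\bdy B_{r/2}(z),\bdy B_r(z)\right)$, i.e.\ an upper bound for the \emph{internal} metric of the annulus on the circle $\bdy B_r(z)$ in terms of a \emph{crossing} distance of the annulus. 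Lemma~\ref{lem-scaled-metric-ratio} gives no control of internal distances: for well-separated $u,v\in\bdy B_r(z)$ the $D_h^\phi$-geodesic may leave $\BB A_{r/2,2r}(z)$, so $D_h^\phi(u,v)$ can be strictly smaller than $D_h^\phi(u,v;\BB A_{r/2,2r}(z))$, and the lemma says nothing at all about pairs with $|u-v|< b r$. So the sentence ``this is precisely the kind of local comparability hypothesis that Theorem~\ref{thm-bilip} is designed to upgrade'' is where the proof is incomplete; you cannot simply ``invoke'' the theorem from the ratio bound.

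The missing argument is exactly the content of Lemma~\ref{lem-coord-ratio} in the paper. One first uses Axioms~\ref{item-metric-f} and~\refcoord\ together with the scale and translation invariance of the law of $h$ modulo additive constant to get, with probability close to $1$, a lower bound $D_h(\bdy B_r(z),\bdy\BB A_{r/2,2r}(z))\geq a\, r^{\xi Q}e^{\xi h_{\BB f,r}(z)}$ and an upper bound $(a/100)\, r^{\xi Q}e^{\xi h_{\BB f,r}(z)}$ for $D_h(u,v)$ whenever $u,v\in\bdy B_r(z)$ with $|u-v|\leq b r$. One then transfers these bounds to $D_h^\phi$ using the \emph{additive} comparison of Proposition~\ref{prop-scaled-metric-uniform} (the ratio form is not convenient here), observes that distances smaller than the distance to $\bdy\BB A_{r/2,2r}(z)$ are realized by paths inside the annulus and hence agree with the internal distances, and finally chains around the circle $\bdy B_r(z)$ in at most $2\pi/b$ steps of Euclidean length $\leq b r$ to bound the internal $D_h^\phi$-diameter of $\bdy B_r(z)$ by a constant (the paper gets $C=\pi/b$) times $r^{\xi Q}e^{\xi h_{\BB f,r}(z)}$, hence by $C$ times the crossing distance. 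Your anticipated obstacle is not actually an issue: Theorem~\ref{thm-bilip} as stated already allows the two metrics to be defined only on $U\subset\BB C$, with the hypothesis required only on compact subsets of $U$, and it returns a deterministic constant, so no exhaustion-and-stitching step or zero-one law is needed.
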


Proposition~\ref{prop-coord-bilip} will be a consequence of Proposition~\ref{prop-scaled-metric-uniform} together with a general criterion for metrics coupled with the same GFF to be bi-Lipschitz equivalent which is proven in~\cite{local-metrics}. To state the criterion, we need a couple of preliminary definitions.

\begin{defn}[Jointly local metrics] \label{def-jointly-local}
Let $U\subset \BB C$ be a connected open set and let $(h,D_1,\dots,D_n)$ be a coupling of a GFF on $U$ and $n$ random continuous length metrics.
We say that $D_1,\dots,D_n$ are \emph{jointly local metrics} for $h$ if for any open set $V\subset U$, the collection of internal metrics $\{ D_j(\cdot,\cdot;  V) \}_{j = 1,\dots,n}$ is conditionally independent from $(h|_{U\setminus V} ,  \{ D_j(\cdot,\cdot;  U\setminus \ol V) \}_{j = 1,\dots,n}   )$ given $h|_V$.
\end{defn}

We note that if each of $D_1,\dots,D_n$ is a local metric for $h$ and is determined by $h$, then $D_1,\dots,D_n$ are jointly local for $h$. 
This is a consequence of~\cite[Lemma 1.4]{local-metrics}.
In particular, $D_{h|_U}$ and $D_h^\phi$ are jointly local for $h|_U$. 

\begin{defn}[Additive local metrics] \label{def-additive-local}
Let $U\subset \BB C$ be a connected open set and let $(h^U ,D_1,\dots,D_n)$ be a coupling of a GFF on $U$ and $n$ random continuous length metrics which are jointly local for $h$.
For $\xi \in\BB R$, we say that $ D_1,\dots,D_n $ are \emph{$\xi$-additive} for $h^U$ if for each $z\in U$ and each $r> 0$ such that $B_r(z) \subset U$, the metrics $(e^{-\xi h_r^U(z)} D_1,\dots, e^{-\xi h_r^U(z)} D_n)$ are jointly local metrics for $h^U - h_r^U(z)$ (where, as per usual, $h^U_r(z)$ denotes the circle average). 
\end{defn}

It is clear from Axiom~\ref{item-metric-f} that the metrics $D_{h|_U}$ and $D_h^\phi$ are $\xi$-additive for $h|_U$. 
The following theorem is~\cite[Theorem 1.6]{local-metrics}.
For the statement, we recall the notation for Euclidean annuli from~\eqref{eqn-annulus-def}. 

\begin{thm} \label{thm-bilip}
Let $\xi \in\BB R$, let $h$ be a whole-plane GFF normalized so that $h_1(0) = 0$, let $U\subset\BB C$, and let $(h,D,\wt D)$ be a coupling of $h$ with two random continuous metrics on $U$ which are jointly local and $\xi$-additive for $h|_U$.  
There is a universal constant $p \in (0,1)$ such that the following is true.
Suppose there is a constant $C>0$ such that for each compact set $K\subset U$, there exists $r_K > 0$ such that
\eqb \label{eqn-bilip}
\BB P\left[  \sup_{u,v \in \bdy B_r(z)} \wt D\left(u,v; \BB A_{r/2,2r}(z) \right) \leq C D(\bdy B_{r/2}(z) , \bdy B_r(z) ) \right] \geq p    ,\quad \forall z\in K, \quad \forall r \in (0,r_K] .
\eqe 
Then a.s.\ $\wt D(z,w) \leq C D(z,w)$ for all $z,w\in\BB C$. 
\end{thm}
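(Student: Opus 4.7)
The plan is a multi-scale \emph{good annulus} covering argument. The hypothesis provides, for each $z\in K$ and each sufficiently small scale $r$, a uniform lower bound $p$ on the probability of the good event $E_r(z) := \{\sup_{u,v\in\bdy B_r(z)} \wt D(u,v;\BB A_{r/2,2r}(z)) \leq C\,D(\bdy B_{r/2}(z),\bdy B_r(z))\}$. I would show that a.s.\ every $D$-near-geodesic between two points of $U$ can be covered by a disjoint family of good annuli, and then bound its $\wt D$-length by $C$ times its $D$-length by summing the good-event bounds across these annuli.

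The first main step is \emph{abundance of good scales}. Fix a compact $K\subset U$ and dyadic scales $r_k = r_K 2^{-k}$. Using joint locality together with the Markov property of the GFF (decomposing $h|_{B_{r_k}(z)}$ as a zero-boundary GFF plus an independent harmonic function determined by boundary data) and $\xi$-additivity (which absorbs the circle average $h_{r_k}(z)$ into a shift of both metrics), I would argue that, conditional on $h$ outside $B_{r_k}(z)$, the event $E_{r_{k+K_0}}(z)$ for some fixed $K_0\geq 2$ still has conditional probability at least $p'$ depending only on $p$. Iterating gives that the probability that none of $E_{r_{k_0}}(z), E_{r_{k_0+K_0}}(z), \ldots, E_{r_{k_0 + N K_0}}(z)$ hold is at most $(1-p')^N$. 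A union bound over a lattice in $K$ of mesh comparable to $r_{k_0 + N K_0}$ then shows that, with probability tending to $1$ as $N\to\infty$, every point of $K$ is near some lattice center admitting a good annulus at one of the allowed scales.

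The second step is a \emph{Vitali-type extraction}. Given a $D$-near-geodesic $P$ from $z$ to $w$ inside $K$, the previous step produces a cover of $P$ by good annuli at various scales; by greedy selection (largest radii first) I would extract a pairwise disjoint subfamily $\{\BB A_{r_i/2, 2r_i}(z_i)\}$ such that the inner balls $B_{r_i/2}(z_i)$ still cover all but a negligible portion of $P$. On each good annulus, rather than following $P$ across $B_{r_i}(z_i)$, replace that segment by a $\wt D$-efficient path between the entry and exit points of $P$ on $\bdy B_{r_i}(z_i)$, staying inside $\BB A_{r_i/2, 2r_i}(z_i)$; by the good event this path has $\wt D$-length at most $C\,D(\bdy B_{r_i/2}(z_i), \bdy B_{r_i}(z_i))$, which is in turn bounded by the $D$-length of $P\cap \BB A_{r_i/2, r_i}(z_i)$. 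Summing over the disjoint annuli, the resulting concatenated curve has $\wt D$-length at most $C$ times the $D$-length of $P$, yielding $\wt D(z,w)\leq C\,D(z,w)$ after taking $P$ arbitrarily close to a $D$-geodesic and letting the maximum scale shrink.

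The main obstacle is the \emph{conditional independence across scales} used in the first step. The events $E_r(z)$ depend jointly on $D$ and $\wt D$ in the annulus $\BB A_{r/2, 2r}(z)$, and the GFF itself has long-range correlations; the delicate point is to combine joint locality with the harmonic decomposition of the GFF so that conditioning on larger-scale information reduces — after the circle-average shift furnished by $\xi$-additivity — to a conditioning that leaves the conditional law of $(D,\wt D)$ at the smaller scale absolutely continuous with respect to its unconditional law, with controlled Radon--Nikodym derivative. The universal constant $p$ must be taken close enough to $1$ to absorb these factors so that iterated conditional probabilities remain bounded away from $0$, producing the exponential decay used to conclude abundance of good scales.
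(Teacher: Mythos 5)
First, a remark on the comparison you asked for: this paper does not prove Theorem~\ref{thm-bilip} at all --- it is quoted verbatim from~\cite[Theorem 1.6]{local-metrics} and used as a black box --- so there is no in-paper proof to measure your argument against. The closest in-paper analogue of your strategy is the proof of Proposition~\ref{prop-attained} in Section~\ref{sec-attained}, and comparing with it is instructive. Your first step (abundance of good scales) is in the right spirit, but note that the iteration lemma as stated (Lemma~\ref{lem-annulus-iterate}) requires events measurable with respect to $\sigma\left((h-h_{r_k}(0))|_{\BB A_{s_1 r_k , s_2 r_k}(0)}\right)$, whereas your events depend on $D$ and $\wt D$; the mechanism that makes the multi-scale argument go through is the \emph{exact} conditional independence of the internal metrics on disjoint regions given the field (Definition~\ref{def-jointly-local}), combined with $\xi$-additivity to absorb the circle average, not an absolute-continuity/Radon--Nikodym estimate on the conditional law. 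This is precisely where the hypotheses ``jointly local'' and ``$\xi$-additive'' enter, and it requires a version of the iteration lemma for metric-decorated events, which is proved in~\cite{local-metrics}; your sketch leaves this as the acknowledged ``delicate point'' rather than supplying it.

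The genuine gap is in your second step. A Vitali-type greedy selection produces a \emph{disjoint} subfamily whose \emph{enlargements} cover; it cannot give you disjoint crossing annuli $\BB A_{r_i/2,r_i}(z_i)$ whose inner balls $B_{r_i/2}(z_i)$ still cover the path, so ``all but a negligible portion of $P$'' is not negligible --- and, more importantly, there is no metric in which you are entitled to call it negligible. On the uncovered portion you propose to keep following $P$, but the $\wt D$-length of a $D$-geodesic is exactly the quantity over which you have no a priori control (it could even be infinite); bounding it is the content of the theorem you are proving. In the Section~\ref{sec-attained} argument the analogous ``in-between'' segments $[t_{j-1},s_j]$ are handled using the a priori bi-Lipschitz constant $C_*$ from Proposition~\ref{prop-coord-bilip}, which is available there only because Theorem~\ref{thm-bilip} has already been invoked; in a proof of Theorem~\ref{thm-bilip} itself no such crutch exists. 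The argument must instead arrange (via the scale iteration plus a union bound over a fine lattice) that with high probability \emph{every} point of the relevant compact set lies in $B_{r/2}(w)$ for some good pair $(w,r)$, and then chain along the $D$-geodesic by consecutive stopping times, so that each increment of the geodesic crosses a good annulus --- giving both the lower bound $D(\bdy B_{r/2}(w),\bdy B_r(w))$ on its $D$-length and, via the good event, a matching $\wt D$-bound through $\BB A_{r/2,2r}(w)$ --- with consecutive time intervals automatically disjoint and no leftover piece of $P$ (apart from small neighborhoods of the endpoints, handled by continuity as in~\eqref{eqn-sup-distance}) ever requiring a $\wt D$-estimate. Without that complete covering and sequential decomposition, your final summation does not yield $\wt D(z,w)\leq C\,D(z,w)$.
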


Let us now check the condition~\eqref{eqn-bilip} for the metrics $D_{h|_U}$ and $D_{h^\phi}$ using Proposition~\ref{prop-scaled-metric-uniform}.

\begin{lem} \label{lem-coord-ratio}
Let $h$ be a whole-plane GFF, with any choice of additive constant. 
For each $p\in (0,1)$, there exists a constant $C = C(p,\gamma) > 0$ such that for each choice of conformal map $\phi : U \rta \phi(U)$ and each compact set $K\subset U$, there exists $r_K  = r_K(\phi) > 0$ such that  
\eqb
\BB P\left[  \sup_{u,v \in \bdy B_r(z)} D_h^\phi\left(u,v; \BB A_{r/2,2r}(z) \right) \leq C D_{h|_U}(\bdy B_{r/2}(z) , \bdy B_r(z) ) \right] \geq p    ,\quad \forall z\in K, \quad \forall r \in (0,r_K]  ;
\eqe  
and the same is true with $D_h^\phi$ and $D_h$ interchanged. 
\end{lem}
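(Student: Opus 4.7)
The plan is to reduce the lemma to a fixed unit scale via Lemma~\ref{lem-coord-scale} and then argue by contradiction in the spirit of Lemma~\ref{lem-scaled-metric-conv}, combining the joint convergence from Lemma~\ref{lem-field-metric-converge} with standard LQG metric estimates on $D_h$ alone.

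\textbf{Rescaling.} By Axiom~\ref{item-metric-f}, shifting $h$ by an additive constant multiplies both sides of the inequality by a common factor, so we may normalize $h$ so that $h_{\BB f, 1}(0) = 0$. Applying Lemma~\ref{lem-coord-scale} together with Axioms~\ref{item-metric-f} and~\refcoord\ shows that both sides of the desired inequality at scale $r$ centered at $z$ equal $r^{\xi Q} e^{\xi h_{\BB f, r}(z)}$ times the corresponding unit-scale quantities for the rescaled field $\tilde h := h(r\cdot + z) - h_{\BB f, r}(z) \eqD h$ and the rescaled conformal map $\tilde \phi(w) := r^{-1}(\phi(rw+z) - z)$. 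Thus it suffices to show that
\eqbn
\sup_{u,v \in \partial \BB D} D_{\tilde h}^{\tilde \phi}\bigl(u, v; \BB A_{1/2, 2}(0)\bigr) \leq C \cdot D_{\tilde h}\bigl(\partial B_{1/2}(0), \partial \BB D\bigr)
\eqen
holds with probability at least $p$ for some $C = C(p)$, uniformly in $z \in K$ and sufficiently small $r > 0$; the $D_{h|_U}$ on the right reduces to $D_{\tilde h}$ because, for small $r$, any path realizing the right-hand side stays in a compact subset of $\tilde U := r^{-1}(U - z)$, which eventually exhausts $\BB C$.

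\textbf{Contradiction via joint convergence.} Suppose the uniform bound failed. Then there would be sequences $r_n \to 0$ and $z_n \in K$ with $z_n \to z_0 \in K$ along which the probability stays below $p$ for every $C$. The associated rescaled maps $\tilde\phi_n$ satisfy $\tilde\phi_n(0) = 0$, $\tilde\phi_n'(0) = \phi'(z_n) \to \phi'(z_0) \neq 0$, and every compact subset of $\BB C$ lies in their domain for large $n$, so Lemma~\ref{lem-field-metric-converge} yields the joint convergence $(\tilde h_n, D_{\tilde h_n}, D_{\tilde h_n}^{\tilde\phi_n}) \to (h, D_h, D_h)$ in law, with the \emph{same} instance of $D_h$ appearing in both metric coordinates. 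Standard LQG estimates for $D_h$ alone (derivable from Axioms~\ref{item-metric-f} and~\refcoord\ together with the scale-translation invariance of the whole-plane GFF modulo additive constant) ensure that $D_h(\partial B_{1/2}(0), \partial \BB D)$ is a.s.\ positive and $\sup_{u, v \in \partial \BB D} D_h(u, v; \BB A_{1/2, 2}(0))$ is a.s.\ finite, so for each $p < 1$ one can choose $C$ large enough that the analogous event for $D_h$ alone holds with probability at least $p$. Transferring this to $D_{\tilde h_n}^{\tilde\phi_n}$ via the joint convergence yields the contradiction. The argument with $D_h$ and $D_h^\phi$ interchanged is identical.

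\textbf{Main obstacle.} The main difficulty is that Lemma~\ref{lem-field-metric-converge} delivers convergence of \emph{global} metrics in the local uniform topology, while the event involves an \emph{internal} metric $D_{\tilde h_n}^{\tilde\phi_n}(\cdot, \cdot; \BB A_{1/2, 2}(0))$, which is only lower semi-continuous under such convergence. To upper-bound the $D_h^\phi$-internal diameter of the annulus by a quantity that behaves well under the limit, I would use Lemma~\ref{lem-scaled-metric-ratio}: fix a finite overlapping cover of the annulus by Euclidean balls of fixed (small but macroscopic) radius on each of which the $D_h^\phi$- and $D_h$-distances between macroscopically separated points agree up to a factor of $1 + o(1)$; then bound the $D_h^\phi$-internal diameter by a sum of $D_h^\phi$-distances along a polygonal chain through ball centers, and convert this to a sum of $D_h$-distances which in turn is controlled by the $D_h$-internal diameter of a slightly larger annulus, a standard LQG estimate.
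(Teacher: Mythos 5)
There is a genuine gap, and it sits exactly at the point you flag as the ``main obstacle.'' Your patch does not close it: the inequality ``$D_h^\phi$-internal diameter of the annulus $\leq$ sum of $D_h^\phi$-distances along a polygonal chain through ball centers'' is false as stated, because point-to-point distances $D_h^\phi(x_i,x_{i+1})$ carry no information about paths staying in $\BB A_{r/2,2r}(z)$: a near-minimizing path between two nearby chain points may exit the annulus entirely, so summing point-to-point distances bounds only the point-to-point distance $D_h^\phi(u,v)$, not $D_h^\phi(u,v;\BB A_{r/2,2r}(z))$. Proposition~\ref{prop-scaled-metric-uniform} and Lemma~\ref{lem-scaled-metric-ratio} compare only point-to-point distances, so by themselves they cannot produce the internal-metric upper bound; and the same discontinuity of internal metrics under local uniform convergence is what prevents your contradiction-via-joint-convergence scaffold (Lemma~\ref{lem-field-metric-converge}) from transferring the event from the limit to $D_{\tilde h_n}^{\tilde\phi_n}$. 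So the argument as written does not prove the lemma.

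The missing ingredient, which is how the paper proceeds, is a quantitative separation between ``distance across'' and ``distance along'': using Axioms~\ref{item-metric-f} and~\refcoord\ and scale/translation invariance of the whole-plane GFF one fixes $a,b>0$ such that with probability at least $1-(1-p)/2$, (i) $D_h(\bdy B_r(z),\bdy\BB A_{r/2,2r}(z))\geq a r^{\xi Q}e^{\xi h_{\BB f,r}(z)}$ while (ii) $D_h(u,v)\leq (a/100) r^{\xi Q}e^{\xi h_{\BB f,r}(z)}$ for all $u,v\in\bdy B_r(z)$ with $|u-v|\leq br$; then Proposition~\ref{prop-scaled-metric-uniform} (with error $\frac{a}{100}r^{\xi Q}e^{\xi h_{\BB f,r}(z)}$, valid for $r\leq r_K(\phi)$ uniformly over $z\in K$) transfers both statements, with slightly worse constants, to $D_h^\phi$. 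Because each short link's distance is then strictly smaller than the distance from $\bdy B_r(z)$ to $\bdy\BB A_{r/2,2r}(z)$, the corresponding (near-)geodesics cannot leave the annulus, so for each link the point-to-point distance equals the internal distance; chaining at most $2\pi/b$ links around $\bdy B_r(z)$ bounds the internal diameter by $\frac{\pi a}{b}r^{\xi Q}e^{\xi h_{\BB f,r}(z)}$, and comparing with (i) (noting $D_{h|_U}\geq D_h$ and that the distance across dominates $D_h(\bdy B_{r/2}(z),\bdy B_r(z))$ from below up to the same constant) yields the lemma with $C=\pi/b$ depending only on $p$ and $\gamma$. Your rescaling and your treatment of the lower bound and of $D_{h|_U}$ versus $D_h$ are fine, and no contradiction/compactness argument or joint convergence is needed once this containment step is in place.
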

\begin{proof}
As in the proof of Lemma~\ref{lem-scaled-metric-ratio}, due to Axiom~\ref{item-metric-f} we can assume without loss of generality that $h$ is normalized so that $h_{\BB f ,1}(0) = 0$. 
By Axioms~\ref{item-metric-f} and~\refcoord, the fact that $D_h$ induces the Euclidean topology, and the scale and translation invariance of the law of $h$, modulo additive constant, we can find small constants $a,b >0$ such that for each $z\in\BB C$ and each $r>0$, it holds with probability at least $1-(1-p)/2$ that the following is true.
\begin{enumerate}
\item $D_h(\bdy B_r(z) , \bdy \BB A_{r/2,2r}(z)) \geq a  r^{\xi Q} e^{\xi h_{\BB f , r}(z)} $. \label{item-coord-ratio-across}
\item For each $u,v\in \bdy B_r(z)$ with $|u-v| \leq b r$, we have $D_h(u,v) \leq (a/100) r^{\xi Q} e^{\xi h_{\BB f , r}(z)}$. \label{item-coord-ratio-around}
\end{enumerate}
By Proposition~\ref{prop-scaled-metric-uniform}, for each compact set $K\subset U$ there exists $r_K = r_K(\phi) > 0$ such that for each $z\in K$ and each $r\in (0,r_K]$, it holds with probability at least $1-(1-p)/2$ that
\eqb
\sup_{u,v\in B_{2r}(z)} | D_h^\phi(u,v)  -  D_h(u,v) | \leq \frac{a}{100} r^{\xi Q} e^{\xi h_{\BB f , r}(z)} .
\eqe
Combining these estimates shows that for each $z\in K$ and each $r\in (0,r_K]$, it holds with probability at least $p$ that 
\begin{enumerate}
\item $D_h^\phi\left(\bdy B_r(z) , \bdy \BB A_{r/2,2r}(z)  \right) \geq (a/2)  r^{\xi Q} e^{\xi h_{\BB f , r}(z)} $.\label{item-coord-ratio-across'}
\item For each $u,v\in \bdy B_r(z)$ with $|u-v| \leq b r$, we have $D_h^\phi(u,v) \leq (a/2) r^{\xi Q} e^{\xi h_{\BB f , r}(z)}$. \label{item-coord-ratio-around'}
\end{enumerate}

If $u,v\in \bdy B_r(z)$ such that $D_h(u,v) \leq D_h(\bdy B_r(z) , \bdy \BB A_{r/2,2r}(z))$, then $D_h(u,v) = D_h(u,v; \BB A_{r/2,2r}(z))$. 
By applying condition~\ref{item-coord-ratio-around} for $D_h$ and the triangle inequality at most $2\pi/b$ times, we therefore have that
\eqb
\sup_{u,v \in \bdy B_r(z)} D_h \left(u,v; \BB A_{r/2,2r}(z) \right) \leq \frac{a \pi}{50 b} r^{\xi Q} e^{\xi h_{\BB f , r}(z)} .
\eqe
Similarly, by condition~\ref{item-coord-ratio-around'} for $D_h^\phi$ and applying the triangle inequality at most $2\pi/b$ times, we have that  
\eqb
\sup_{u,v \in \bdy B_r(z)} D_h^\phi\left(u,v; \BB A_{r/2,2r}(z) \right) \leq \frac{a \pi}{  b}  r^{\xi Q} e^{\xi h_{\BB f , r}(z)}.
\eqe
Combining the preceding two estimates with condition~\ref{item-coord-ratio-across} for each of $D_h$ and $D_h^\phi$ gives the statement of the lemma with $C = \pi/b$. 
\end{proof}

\begin{proof}[Proof of Proposition~\ref{prop-coord-bilip}]
The metrics $D_h$ and $D_h^\phi$ are each local metrics for $h|_U$. Moreover, these metrics are determined by $h|_U$ so they are jointly local for $h|_U$. By Axiom~\ref{item-metric-f}, for $z\in \BB C$ and $r>0$, the metrics $e^{-\xi h_r(z)} D_h$ and $e^{-\xi h_r(z)} D_h^\phi$ are each local for $h|_U - h_r(z)$, so in particular these metrics are $\xi$-additive for $h|_U$. 
Therefore, the proposition statement follows from Lemma~\ref{lem-coord-ratio} and Theorem~\ref{thm-bilip}.
\end{proof}

\subsection{The bi-Lipschitz constant is 1}
\label{sec-attained}

We will now show that in fact the constant $C$ in Proposition~\ref{prop-coord-bilip} can be taken to be one. 

\begin{prop} \label{prop-attained}
Let $h$ be a whole-plane GFF (with any choice of normalization). Let $U\subset\BB C$ be an open domain and let $\phi : U\rta \phi(U)$ be a conformal map. 
Almost surely, we have $D_h^\phi(z,w) \leq D_{h|_U}(z,w)$ for each $z,w\in U$. 
\end{prop}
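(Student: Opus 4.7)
The plan is to argue by contradiction. Let $C_* \ge 1$ denote the infimum of deterministic constants $C$ such that a.s.\ $D_h^\phi(z,w) \le C D_{h|_U}(z,w)$ for all $z,w \in U$; this infimum is finite by Proposition~\ref{prop-coord-bilip}. Assume for contradiction that $C_* > 1$, and fix $\delta \in (0,1)$ with $1+\delta < C_*$ and then $\eta \in (0,1)$ with $(1-\eta)(1+\delta) + \eta C_* < C_*$. The goal is to establish the improved a.s.\ bound
$$D_h^\phi(z,w) \le \left[(1-\eta)(1+\delta) + \eta C_*\right] D_{h|_U}(z,w)$$
for all $z,w$ in an arbitrary fixed compact subset of $U$, contradicting the minimality of $C_*$.

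The key step is a good annulus covering of $D_h$-near-geodesics, in the spirit of \cite[Section 3]{gm-uniqueness}. For $z \in U$, $r > 0$, and a fixed small $b > 0$, let $G_r(z)$ be the event of Lemma~\ref{lem-scaled-metric-ratio} on which $D_h^\phi(u,v) \le (1+\delta) D_h(u,v)$ for all $u,v \in B_r(z)$ with $|u-v| \ge b r$; by that lemma, $\BB P[G_r(z)] \to 1$ uniformly for $z$ in a compact set as $r \to 0$. Combining this with multi-scale near-independence estimates for the GFF (the field restricted to well-separated annular regions is approximately independent modulo harmonic functions, so good-annulus-type events at well-separated scales are approximately independent), I expect to show that with high probability every point of any fixed compact set admits some scale in a geometric sequence $r_k = 2^{-k}$ at which a suitable variant of $G_{r_k}(z)$ holds. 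A Vitali-style covering of a $D_h$-near-geodesic $P$ from $z$ to $w$ by Euclidean balls $\{B_{r_i/2}(z_i)\}$ centered on $P$ at such good scales is then constructed, with the enlarged balls $\{B_{r_i}(z_i)\}$ covering a fraction of the $D_h$-length of $P$ at least $1 - \eta$.

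Parameterize $P$ by $D_h$-length on $[0, T]$ with $T$ close to $D_{h|_U}(z,w)$, and let $0 = t_0 < t_1 < \cdots < t_N = T$ record the entry/exit times of $P$ through the balls in the covering, interleaved with the endpoints. On each interval $[t_{i-1}, t_i]$ corresponding to a good ball, the points $P(t_{i-1})$ and $P(t_i)$ lie in $\ol{B_{r_i/2}(z_i)}$ with Euclidean separation at least $b r_i$, so $G_{r_i}(z_i)$ yields $D_h^\phi(P(t_{i-1}), P(t_i)) \le (1 + \delta)(t_i - t_{i-1})$. On the bad intervals, whose total $D_h$-length is at most $\eta T$, the bi-Lipschitz bound of Proposition~\ref{prop-coord-bilip} gives $D_h^\phi(P(t_{i-1}), P(t_i)) \le C_*(t_i - t_{i-1})$. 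Summing via the triangle inequality then yields
$$D_h^\phi(z,w) \le \sum_{i=1}^N D_h^\phi(P(t_{i-1}), P(t_i)) \le \left[(1-\eta)(1+\delta) + \eta C_*\right] D_{h|_U}(z,w),$$
the desired contradiction.

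The main obstacle will be the construction of the good covering in the second paragraph: the probability of a single good event $G_r(z)$ is only close to one for small $r$, but a $D_h$-near-geodesic visits regions where the ``typical'' Euclidean scale for $D_h$ varies widely, so the covering must be produced uniformly over the geodesic and adapted to its local Euclidean extent. Arranging this, and translating from Euclidean covering fractions to $D_h$-length fractions along $P$, will require the multi-scale independence estimates alluded to above together with a careful Vitali-type argument; I anticipate that this step follows closely the corresponding one in \cite[Section 3]{gm-uniqueness}.
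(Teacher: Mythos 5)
Your high-level strategy (define the optimal constant $C_*$ via Proposition~\ref{prop-coord-bilip}, cover a geodesic by ``good'' balls where $D_h^\phi \leq (1+\delta)D_h$, bound the bad portion by $C_*$, and contradict minimality of $C_*$) is the same as the paper's, but the step you yourself flag as the main obstacle is a genuine gap, and it is exactly where the paper has to work hardest. The event $G_r(z)$ from Lemma~\ref{lem-scaled-metric-ratio} is \emph{not} locally determined: it involves $D_h(u,v)$ and $D_h^\phi(u,v)$ for all $u,v\in B_r(z)$, which depend on the field far from $B_r(z)$. Consequently the ``multi-scale near-independence'' you invoke does not apply to $G_{r_k}(z)$, and Lemma~\ref{lem-scaled-metric-ratio} gives only qualitative convergence $\BB P[G_r(z)]\rta 1$ with no rate, which is not enough for the union bound over the $\asymp \ep^{-2}$ lattice points needed to get a covering of a whole compact set with high probability. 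The paper resolves this by replacing $G_r(z)$ with the event $E_r(z)$, whose distance comparison is restricted to pairs $u\in\bdy B_{\alpha r}(z)$, $v\in\bdy B_r(z)$ joined by a $D_h$-geodesic staying in $\ol{\BB A_{\alpha r,r}(z)}$, together with an auxiliary condition (condition~\ref{item-attained-long}) engineered precisely so that $E_r(z)$ is measurable with respect to $h|_{\BB A_{r/2,2r}(z)}$ (Lemma~\ref{lem-attained-msrble}); only then can the annulus-iteration Lemma~\ref{lem-annulus-iterate} and a union bound produce the uniform covering statement (Lemma~\ref{lem-shorter-annulus-all}).

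This localization has a price that your proposal does not account for: the good bound only applies to the annulus-crossing segments $P|_{[s_j,t_j]}$ of the geodesic, not to the full increments between successive good balls, so there is no mechanism in your argument to make the good portion occupy a fraction $1-\eta$ of the $D_h$-length --- a Euclidean Vitali covering says nothing about $D_h$-length fractions along $P$. The paper's mechanism is condition~\ref{item-attained-around}: a path of $D_h$-length at most $A\,D_h(\bdy B_{\alpha r_j}(w_j),\bdy B_{r_j}(w_j))$ disconnecting the annulus, which the geodesic must cross before time $t_{j-1}$ and after time $s_j$; the geodesic property then forces $s_j-t_{j-1}\leq A(t_j-s_j)$, i.e.\ the good segments occupy a fraction at least $\tfrac{1}{A+1}$ of each increment. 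Note also that your insistence on a fraction $1-\eta$ is unnecessary: any deterministic positive fraction suffices, since $C_\delta = 1+\delta+\tfrac{A}{A+1}(C_*-1-\delta) < C_*$ once $\delta$ is small, which is why the paper's much weaker covering control is adequate. Finally, two smaller points you omit: one needs a regularity event forcing geodesics between nearby points of $K$ to stay in a compact subset of $U$ (so that $D_h=D_{h|_U}$ along them and $D_h^\phi$ is even defined there), and the crossing-count argument uses exact geodesics, so working with near-geodesics would require tracking an additional additive error.
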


Before proving Proposition~\ref{prop-attained}, we explain why it implies our main result.

\begin{proof}[Proof of Theorem~\ref{thm-coord}, assuming Proposition~\ref{prop-attained}]
For a whole-plane GFF $h$ and a domain $U\subset\BB C$, we can write $h|_U$ as the sum of a zero-boundary GFF on $U$ and an independent random harmonic function on $U$. 
Therefore, if $h^U$ is a random distribution on $U$ as in Theorem~\ref{thm-coord}, then we can couple $h^U$ with $h$ in such a way that $g := h|_U -h^U$ is a random continuous function on $U$. 
By Proposition~\ref{prop-attained} and Axiom~\ref{item-metric-f}, it follows that a.s.\ 
\alb
D_{h^U}(z,w)
= (e^{\xi g}\cdot D_{h|_U})(z,w)
&\geq (e^{\xi g} \cdot D_h^\phi)(z,w)  \notag \\
&= (e^{\xi (g\circ\phi^{-1} ) }\cdot D_{h^\phi})\left(\phi(z) , \phi(w) \right) \notag\\
&= D_{h^U\circ\phi^{-1} + Q\log|(\phi^{-1})'|}\left(\phi(z),\phi(w)\right) ,\quad\forall z,w \in U . 
\ale
This is a one-sided version of~\eqref{eqn-coord}. 
By the conformal invariance of the law of the zero-boundary GFF, we can apply this one-sided statement with $h^U\circ\phi^{-1} + Q\log|(\phi^{-1})'|$ in place of $h$ and $\phi^{-1}$ in place of $\phi$ to get the opposite inequality in~\eqref{eqn-coord}. 
\end{proof}

The proof of Proposition~\ref{prop-attained} is similar to the proof of~\cite[Proposition 3.6]{gm-uniqueness}.
Fix a small $\delta \in (0,1)$ (which we will eventually send to zero) and a parameter $\alpha\in (1/2,1)$, a little bit less than 1.
We will use Lemma~\ref{lem-scaled-metric-ratio} together with a general independence result for events for the GFF restricted to annuli (Lemma~\ref{lem-annulus-iterate} just below) to cover a given compact set $K\subset U$ by Euclidean balls of the form $B_{r/2}(z)$ such that $D_h^\phi(u,v) \leq (1+\delta) D_{h|_U}(u,v)$ for each $u \in \bdy B_{\alpha r}(z)$ and each $v\in \bdy B_{r}(z)$ which can be joined by a $D_{h|_U}$-geodesic contained in $\ol{\BB A_{\alpha r , r}(z)}$. 
If we assume that $D_h^\phi \leq C D_{h|_U}$ for some $C>1$, then by considering the times when a $D_{h|_U}$-geodesic between two fixed points $\BB z , \BB w \in \BB C$ crosses the annulus $\BB A_{\alpha r , r}(z)$ for such a $z$ and $r$, we will be able to show that $ D_h^\phi(\BB z , \BB w) \leq C_\delta D_{h|_U}(\BB z,\BB w)$ for a constant $C_\delta$ which is strictly smaller than $C$ if $\delta$ is chosen to be sufficiently small. 
This shows that one has to have  $D_h^\phi \leq C D_{h|_U}$ for $C=1$, since if the optimal constant for which this holds is strictly bigger than 1, then this optimal constant can be improved, which is a contradiction.

The following annulus iteration lemma, which is \cite[Lemma~3.1]{local-metrics}, a generalization of a result from \cite{mq-geodesics}, will be used to produce the desired covering by balls of the form $B_{r/2}(z)$.
 
\begin{lem} \label{lem-annulus-iterate}
Fix $0 < s_1<s_2 < 1$. Let $\{r_k\}_{k\in\BB N}$ be a decreasing sequence of positive real numbers such that $r_{k+1} / r_k \leq s_1$ for each $k\in\BB N$ and let $\{E_{r_k} \}_{k\in\BB N}$ be events such that $E_{r_k} \in \sigma\left( (h-h_{r_k}(0)) |_{\BB A_{s_1 r_k , s_2 r_k}(0)  } \right)$ for each $k\in\BB N$. 
For $K\in\BB N$, let $N(K)$ be the number of $k\in [1,K]_{\BB Z}$ for which $E_{r_k}$ occurs.  
\item For each $a > 0$ and each $b\in (0,1)$, there exists $p = p(a,b,s_1,s_2) \in (0,1)$ and $c = c(a,b,s_1,s_2) > 0$ such that if \label{item-annulus-iterate-high}
\eqb \label{eqn-annulus-iterate-prob}
\BB P\left[ E_{r_k}  \right] \geq p , \quad \forall k \in \BB N,
\eqe 
then 
\eqb \label{eqn-annulus-iterate}
\BB P\left[ N(K)  < b K\right] \leq c e^{-a K} ,\quad\forall K \in \BB N. 
\eqe  
\end{lem}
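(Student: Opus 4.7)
The plan is to exploit the spatial Markov property of the whole-plane GFF in conjunction with a Chernoff-type estimate for sums of conditionally almost-independent Bernoullis.

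First, because $s_2 < 1$ and $r_{k+1}/r_k \leq s_1$, we have $s_2 r_{k+1} < s_1 r_k$, so the annuli $A_k := \BB A_{s_1 r_k, s_2 r_k}(0)$ are pairwise disjoint; in fact $A_{k+1}$ lies strictly inside the inner hole of $A_k$. I would set up the increasing filtration $\mcl F_k := \sigma(h|_{\BB C \setminus B_{s_1 r_{k-1}}(0)})$, so that each $E_{r_{k-1}} \in \mcl F_k$ while $E_{r_k}$ first becomes $\mcl F_{k+1}$-measurable. By the spatial Markov property, conditional on $\mcl F_k$ we may write $h|_{B_{s_1 r_{k-1}}(0)} = \rng h_k + \frk h_k$, where $\rng h_k$ is a zero-boundary GFF on the disk (independent of $\mcl F_k$) and $\frk h_k$ is the harmonic extension of $h|_{\bdy B_{s_1 r_{k-1}}(0)}$ and is $\mcl F_k$-measurable. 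Since a harmonic function's circle average at its center equals its value at the center, this yields
\eqbn
(h - h_{r_k}(0))|_{A_k} = \bigl(\rng h_k - (\rng h_k)_{r_k}(0)\bigr)|_{A_k} + \bigl(\frk h_k - \frk h_k(0)\bigr)|_{A_k} ,
\eqen
splitting the field modulo circle average into an $\mcl F_k$-independent piece plus an $\mcl F_k$-measurable harmonic piece.

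Second, I would control the conditional probabilities $Q_k := \BB P[E_{r_k} \mid \mcl F_k]$. These are $\mcl F_k$-measurable functionals of the harmonic boundary contribution, and by the tower property $\mathbb{E}[Q_k] \geq p$. Applying Markov's inequality to $1-Q_k \in [0,1]$ gives $\BB P[Q_k < 1-\delta] \leq (1-p)/\delta$ for any $\delta > 0$, which is as small as desired once $p$ is close to $1$. Writing $X_k := 1 - \mathbf{1}_{E_{r_k}}$ and splitting according to the ``good scale'' events $G_k := \{Q_k \geq 1-\delta\} \in \mcl F_k$, one has $\mathbb{E}[X_k \mathbf{1}_{G_k} \mid \mcl F_k] \leq \delta$, and iterated conditioning yields the MGF bound $\mathbb{E}[\exp(t \sum_{k=1}^K X_k \mathbf{1}_{G_k})] \leq \exp(K(e^t-1)\delta)$, whence Chernoff gives exponential-in-$K$ decay for this piece. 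The remaining contribution $\sum_{k=1}^K X_k \mathbf{1}_{G_k^c} \leq \sum_{k=1}^K \mathbf{1}_{G_k^c}$ is then bounded by an analogous Markov-decomposition argument applied at the next level of the filtration, where the bad-event conditional probabilities $\BB P[G_k^c \mid \mcl F_{k-1}]$ are controlled in the same fashion.

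Combining these estimates and optimizing the Chernoff parameter $t$ gives $\BB P[N(K) < bK] = \BB P[\sum_k X_k > (1-b)K] \leq c e^{-aK}$, provided $p = p(a, b, s_1, s_2)$ is chosen close enough to $1$. The main obstacle is the bootstrap step at the end: although each bad-scale probability is small by Markov's inequality, the indicators $\mathbf{1}_{G_k^c}$ are $\mcl F_k$-measurable but not independent, so one must iterate the Markov-property decomposition and MGF bound through several levels of the nested filtration. Scale invariance of the whole-plane GFF modulo additive constant is used throughout to keep the per-scale estimates uniform in $k$.
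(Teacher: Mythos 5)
You are proving an external input rather than matching an argument in this paper: the paper does not prove Lemma~\ref{lem-annulus-iterate} at all, but imports it verbatim as \cite[Lemma~3.1]{local-metrics} (itself a generalization of a result in \cite{mq-geodesics}). Judged on its own, the first two-thirds of your proposal are sound: the annuli are nested and disjoint; the Markov-property decomposition $(h-h_{r_k}(0))|_{A_k}=(\rng h_k-(\rng h_k)_{r_k}(0))|_{A_k}+(\frk h_k-\frk h_k(0))|_{A_k}$ is correct (using the mean-value property of $\frk h_k$); $E_{r_k}\in\mcl F_{k+1}$ and $G_k=\{Q_k\ge 1-\delta\}\in\mcl F_k$, so the iterated-conditioning MGF bound $\BB E[\exp(t\sum_{k\le K}X_k\mathbf{1}_{G_k})]\le \exp(K(e^t-1)\delta)$ and the ensuing Chernoff step are valid.

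The genuine gap is the final step, the control of $\sum_{k\le K}\mathbf{1}_{G_k^c}$, which is where the entire difficulty of the lemma lives. All you have for $G_k^c$ is the marginal bound $\BB P[G_k^c]\le(1-p)/\delta$ from Markov's inequality, and marginal smallness alone can never give $\BB P[\sum_{k\le K}\mathbf{1}_{G_k^c}\ge \epsilon K]\le c e^{-aK}$: if the $G_k^c$ were perfectly correlated, the left side would be of constant order in $K$. Your proposed fix---controlling $\BB P[G_k^c\mid\mcl F_{k-1}]$ ``in the same fashion''---does not work: the tower property only controls the \emph{average} of this conditional probability, and it is genuinely not uniformly small, because $Q_k$ is a functional of $(\frk h_k-\frk h_k(0))|_{A_k}$, which given $\mcl F_{k-1}$ contains the $\mcl F_{k-1}$-measurable, unbounded contribution $(\frk h_{k-1}-\frk h_{k-1}(0))|_{A_k}$; on a positive-probability $\mcl F_{k-1}$-event the conditional probability of $G_k^c$ can be near $1$. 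Repeating the good/bad splitting one level up just reproduces an identical leftover sum of bad indicators, so the regress never closes. What is missing is a quantitative decoupling input across scales: e.g.\ a Cameron--Martin/Radon--Nikodym estimate comparing the conditional law of $(h-h_{r_k}(0))|_{A_k}$ given the field outside $B_{s_1 r_{k-1}}(0)$ to its unconditional law, applied on a ``tame'' event for the harmonic part whose failure probability is controlled \emph{conditionally on the previous scale} (using that the inherited part of the harmonic shift on $A_k$ is contracted by a factor comparable to $r_k/r_{k-1}\le s_1$), yielding a uniform lower bound on $\BB P[E_{r_k}\mid\mcl F_k]$ off an event whose own conditional probability is uniformly small. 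Supplying this mechanism is exactly the content of the proof in \cite{local-metrics,mq-geodesics}; without it, your argument shows only that bad scales are rare on average, not that a positive fraction of bad scales is exponentially unlikely.
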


\begin{figure}[t!]
 \begin{center}
\includegraphics[scale=1]{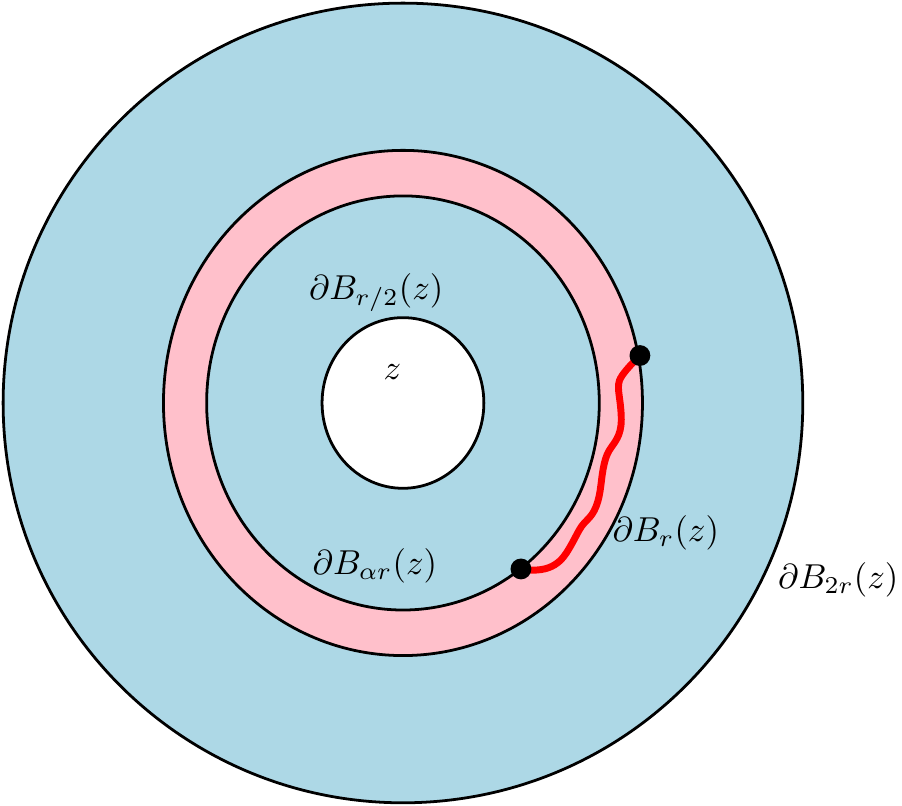}
\vspace{-0.01\textheight}
\caption{
Illustration of condition~\ref{item-attained-long} in the definition of $E_r(z)$. Suppose that $u \in \bdy B_{\alpha r}(z)$ and $v \in \bdy B_{r}(z)$ are ``far apart" in the sense that $v$ is further from $u$ than the boundary of the large blue annulus $\BB A_{r/2,r}(z)$, w.r.t.\ either $D_h$ or $D_h^\phi$. Then condition~\ref{item-attained-long} says that any path from $u$ to $v$ which is contained in $\ol{\BB A_{\alpha r , r}(z)}$ (such as the red path in the figure) has to have $D_h$-length strictly larger than $D_h(u,v;\BB A_{r/2,2r}(z) )$, so in particular such a path cannot be a $D_h$-geodesic. To obtain that this condition holds with high probability, we will choose $\alpha$ to be close to 1 and use the fact paths which are ``close" to a circular arc have large $D_h$-lengths (see Lemma~\ref{lem-attained-long}). 
}\label{fig-attained-long}
\end{center}
\vspace{-1em}
\end{figure}

Let us now define the events to which we will apply Lemma~\ref{lem-annulus-iterate}. 
For $z \in U$, $r > 0$ such that $B_r(z) \subset U$, and parameters $\alpha \in (1/2,1)$, $ A > 1$, and $\delta \in (0,1)$ and let $E_r(z) = E_r(z; \alpha,A,\delta)$ be the event that the following is true.
\begin{enumerate}
\item For each $u \in \bdy B_{\alpha r}(z)$ and each $v\in \bdy B_{r}(z)$ such that there is a $D_h$-geodesic from $u$ to $v$ which is contained in $\ol{\BB A_{\alpha r , r}(z)}$, we have $D^\phi_h(u,v) \leq (1+\delta) D_h(u,v)$. \label{item-attained-dist}
\item If $u \in \bdy B_{\alpha r}(z)$ and $v\in \bdy B_r(z)$ such that either $D_h(u,v) > D_h(u , \bdy\BB A_{r/2,2r}(z))$ or $  D^\phi_h(u,v) > D^\phi_h(u , \bdy\BB A_{r/2,2r}(z))$, then each path from $u$ to $v$ which stays in $\ol{\BB A_{\alpha r , r}(z)}$ has $D_h$-length strictly larger than $D_h\left( u , v ; \BB A_{r/2,2r}(z) \right)$ (see Figure~\ref{fig-attained-long} for an illustration). \label{item-attained-long}
\item There is a path in $\BB A_{\alpha r , r}(z)$ which disconnects the inner and outer boundaries of $\BB A_{\alpha r , r}(z)$ and has $D_h$-length at most $A D_h\left(\bdy B_{\alpha r}(z) , \bdy B_{  r}(z) \right)$. \label{item-attained-around}
\end{enumerate}
Condition~\ref{item-attained-dist} is the main point of the event $E_r(z)$, as discussed just above. 
The purpose of condition~\ref{item-attained-long} is to ensure that $E_r(z)$ is determined by $h|_{\BB A_{r/2,2r}(z)}$. 
Indeed, as we will see in the proof of Lemma~\ref{lem-attained-msrble} just below, on the event that this condition is satisfied $h|_{\BB A_{r/2,2r}(z)}$ determines which paths in $\ol{\BB A_{\alpha r ,r}(z)}$ are $D_h$-geodesics. 
The purpose of condition~\ref{item-attained-around} is to ensure that the annuli $\BB A_{\alpha r , r}(z)$ for which $E_r(z)$ occurs (rather than just the balls $B_r(z)$ for which $E_r(z)$ occurs) cover a positive fraction of the $D_h$-length of a $D_h$-geodesic. 
Indeed, if a $D_h$-geodesic between two points outside of $B_r(z)$ enters $B_{\alpha r}(z)$, then it must cross the path from condition~\ref{item-attained-around} twice. 
Since geodesics are length minimizing, this means that it can spend at most $A D_h\left(\bdy B_{\alpha r}(z) , \bdy B_{  r}(z) \right)$ units of time in $B_{\alpha r}(z)$: otherwise the path from condition~\ref{item-attained-around} would provide a shortcut. 
 
We want to use Lemma~\ref{lem-annulus-iterate} to argue that with high probability we can cover any given compact subset of $U$ by balls $B_{r/2}(z)$ for which $E_r(z)$ occurs. 
We first check the measurability condition in Lemma~\ref{lem-annulus-iterate}

\begin{lem} \label{lem-attained-msrble}
For each $z\in\BB C$ and $r>0$,  
\eqb \label{eqn-attained-msrble}
E_r(z) \in \sigma\left(  (h-h_{4r}(z)) |_{   \BB A_{r/2,2r}(z)  } \right) .
\eqe
\end{lem}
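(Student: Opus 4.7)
My approach is to show that $E_r(z)$ is measurable with respect to $h|_{\BB A_{r/2,2r}(z)}$ modulo additive constant; since $(h-h_{4r}(z))|_{\BB A_{r/2,2r}(z)}$ encodes exactly this information, the lemma follows. The ``modulo constant'' part is free: by Axiom~\ref{item-metric-f}, replacing $h$ by $h+c$ scales both $D_h$ and $D_h^\phi$ by $e^{\xi c}$, and every condition in the definition of $E_r(z)$ is a comparison between such quantities and hence invariant under this scaling. So the real task is to rewrite each of conditions~\ref{item-attained-dist}, \ref{item-attained-long}, and~\ref{item-attained-around} using only the internal metrics $D_h(\cdot,\cdot ; \BB A_{r/2,2r}(z))$ and $D_h^\phi(\cdot,\cdot ; \BB A_{r/2,2r}(z))$, which are determined by $h|_{\BB A_{r/2,2r}(z)}$ by Axiom~\ref{item-metric-local} and the fact that $D_h^\phi$ is a local metric for $h|_U$.

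Conditions~\ref{item-attained-around} and~\ref{item-attained-long} are straightforward. For~\ref{item-attained-around}, any path from $\bdy B_{\alpha r}(z)$ to $\bdy B_r(z)$ contains a sub-path lying entirely in $\ol{\BB A_{\alpha r , r}(z)}$ of no greater length (take the sub-path between the last exit of $\bdy B_{\alpha r}(z)$ and the next entry to $\bdy B_r(z)$), so $D_h(\bdy B_{\alpha r}(z), \bdy B_r(z)) = D_h(\bdy B_{\alpha r}(z), \bdy B_r(z); \ol{\BB A_{\alpha r , r}(z)})$. The analogous argument applied to a geodesic from $u$ to its nearest boundary point of $\BB A_{r/2,2r}(z)$ gives $D_h(u, \bdy \BB A_{r/2,2r}(z)) = D_h(u, \bdy \BB A_{r/2,2r}(z); \BB A_{r/2,2r}(z))$, and likewise for $D_h^\phi$. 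The hypothesis ``$D_h(u,v) > D_h(u, \bdy \BB A_{r/2,2r}(z))$'' in~\ref{item-attained-long} can then be replaced by its internal analogue: the forward direction uses $D_h \leq D_h(\cdot,\cdot;\BB A_{r/2,2r}(z))$, and for the converse, $D_h(u,v) \leq D_h(u, \bdy \BB A_{r/2,2r}(z))$ forces any $D_h$-geodesic from $u$ to $v$ to remain in $\BB A_{r/2,2r}(z)$, whence $D_h(u,v) = D_h(u,v ; \BB A_{r/2,2r}(z))$. The same replacement works for the $D_h^\phi$ inequality, and the conclusion of~\ref{item-attained-long}, which concerns $D_h$-lengths of paths in $\ol{\BB A_{\alpha r, r}(z)}$ and the internal distance $D_h(u,v;\BB A_{r/2,2r}(z))$, is already an internal statement.

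The main obstacle is condition~\ref{item-attained-dist}, because a priori the notion of an unrestricted $D_h$-geodesic lying in $\ol{\BB A_{\alpha r, r}(z)}$ is not an internal quantity. The trick is that since $E_r(z)$ is the intersection of all three conditions, I may assume~\ref{item-attained-long} when checking~\ref{item-attained-dist}. Under this assumption, I claim that a $D_h$-geodesic from $u$ to $v$ in $\ol{\BB A_{\alpha r, r}(z)}$ exists if and only if an internal $D_h(\cdot,\cdot;\BB A_{r/2,2r}(z))$-geodesic in $\ol{\BB A_{\alpha r, r}(z)}$ exists. The forward direction is immediate: if $P$ is a $D_h$-geodesic in $\ol{\BB A_{\alpha r, r}(z)}$, then $D_h(u,v) \leq D_h(u,v;\BB A_{r/2,2r}(z)) \leq \op{len}(P;D_h) = D_h(u,v)$ collapses to equalities, so $P$ is also an internal geodesic. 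For the converse, given an internal geodesic $P \subset \ol{\BB A_{\alpha r, r}(z)}$, the contrapositive of~\ref{item-attained-long} applied to $P$ yields both $D_h(u,v) \leq D_h(u, \bdy \BB A_{r/2,2r}(z))$ and $D_h^\phi(u,v) \leq D_h^\phi(u, \bdy \BB A_{r/2,2r}(z))$. The first forces $D_h(u,v) = D_h(u,v;\BB A_{r/2,2r}(z))$, so $P$ is genuinely a $D_h$-geodesic; the second simultaneously gives $D_h^\phi(u,v) = D_h^\phi(u,v;\BB A_{r/2,2r}(z))$, which lets me rewrite the bound $D_h^\phi(u,v) \leq (1+\delta) D_h(u,v)$ purely in terms of internal metrics. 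This is precisely why~\ref{item-attained-long} carries the redundant-looking $D_h^\phi$ alternative in its hypothesis. With all three conditions expressed internally, $E_r(z) \in \sigma(h|_{\BB A_{r/2,2r}(z)})$, and by the first paragraph this implies~\eqref{eqn-attained-msrble}.
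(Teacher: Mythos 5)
Your proposal is correct and follows essentially the same approach as the paper: use Weyl scaling (Axiom~\ref{item-metric-f}) to dispose of the $h_{4r}(z)$ normalization, use Axiom~\ref{item-metric-local} to reduce conditions~\ref{item-attained-around} and~\ref{item-attained-long} to internal statements, and then observe that condition~\ref{item-attained-dist} becomes internal \emph{given} condition~\ref{item-attained-long}, with the seemingly redundant $D_h^\phi$ alternative in the hypothesis of~\ref{item-attained-long} being exactly what makes $D_h^\phi(u,v)$ an internal quantity. The paper phrases the final step in terms of the restriction of $h$ to the $D_h$-metric ball of radius $D_h(u,\bdy\BB A_{r/2,2r}(z))$ centered at $u$, whereas you phrase it via the equivalence of unrestricted and internal geodesics in $\ol{\BB A_{\alpha r,r}(z)}$, but these are the same argument.
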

\begin{proof}
By Axiom~\ref{item-metric-f} subtracting $h_{4r}(0)$ from $h$ results in scaling each of $D_h$ and $D^\phi_h$ by $e^{\xi h_{4r}(0)}$, so does not affect the occurrence of $E_r(z)$.  
Hence it suffices to show that $E_r(z) \in   \sigma( h|_{\BB A_{r/2,2r}(z)})$. 
By Axiom~\ref{item-metric-local}, condition~\ref{item-attained-around} in the definition of $E_r(z)$ is determined by $h |_{   \BB A_{r/2,2r}(z)  }$. 

For $u \in \bdy B_{\alpha r}(z)$ and $v\in \bdy B_{r}(z)$, we can determine whether $D_h(u,v) > D_h(u , \bdy\BB A_{r/2,2r}(z))$ from the internal metric $D_h\left(\cdot,\cdot ;\BB A_{r/2,2r}(z)  \right)$: indeed, $D_h(u , \bdy\BB A_{r/2,2r}(z))$ is clearly determined by this internal metric and $D_h(u,v) \leq D_h(u , \bdy\BB A_{r/2,2r}(z))$ if and only if $v$ is contained in the $D_h$-ball of radius $ D_h(u , \bdy\BB A_{r/2,2r}(z))$ centered at $v$, which is contained in $\ol{\BB A_{r/2,2r}(z)}$.  
Similar considerations hold with $D^\phi_h$ in place of $D_h$. 
By the locality of the metrics $D_h$ and $D_h^\phi$, it follows that condition~\ref{item-attained-long} in the definition of $E_r(z)$ is determined by $h|_{\BB A_{r/2,2r}(z)}$. 

If $P$ is a path from $ u \in \bdy B_{\alpha r}(z)$ to $v\in \bdy B_r(z)$ which stays in $\ol{\BB A_{\alpha r ,r}(z)}$, then $P$ is a $D_h$-geodesic if and only if $\op{len}(P ; D_h) = D_h(u,v)$. 
Therefore, if condition~\ref{item-attained-long} holds, then in order for $P$ to be a $D_h$-geodesic we must have $D_h(u,v) \leq D_h(u , \bdy\BB A_{r/2,2r}(z) )$ and $ D^\phi_h(u,v) \leq D^\phi_h(u , \bdy\BB A_{r/2,2r}(z) )$ (note that $D_h(u,v ; \BB A_{r/2,2r}(z)) \geq D_h(u,v)$).
If this is the case, then we can tell whether $P$ is a $D_h$-geodesic from the restriction of $h$ to the $D_h$-metric ball of radius $D_h(u , \bdy\BB A_{r/2,2r}(z) )$ centered at $u$.
We know this restriction is determined by $h|_{\BB A_{r/2,2r}(z)}$ by Axiom~\ref{item-metric-local}. 

On the event that $D_h(u,v) \leq D_h(u , \bdy\BB A_{r/2,2r}(z) )$ and $ D^\phi_h(u,v) \leq D^\phi_h(u , \bdy\BB A_{r/2,2r}(z) )$, both $D_h(u,v)$ and $D^\phi_h(u,v)$ are determined by $h|_{\BB A_{r/2,2r}(z)}$. 
Therefore, the intersection of conditions~\ref{item-attained-dist} and~\ref{item-attained-long} in the definition of $E_r(z)$ is determined by $h|_{\BB A_{r/2,2r}(z)}$.  
Hence we have proven~\eqref{eqn-attained-msrble}.  
\end{proof}

We now use Lemma~\ref{lem-scaled-metric-ratio} to prove a lower bound for the probability that $E_r(z)$ occurs for at least one small value of $r$. 

\begin{lem} \label{lem-shorter-annulus}
For each $q>1$, there exist parameters $\alpha \in (1/2,1)$ and $A>1$, depending only on $q$, such that for each compact set $K\subset U$ and each $\delta\in(0,1)$, we have
\eqb \label{eqn-shorter-annulus}
\inf_{z\in K} \BB P\left[ \text{$E_r(z)$ occurs for at least one $r\in [\ep^2   , \ep ] \cap \{2^{-k} : k\in\BB N\}$} \right] \geq 1 -  O_\ep(\ep^q )  .
\eqe  
\end{lem}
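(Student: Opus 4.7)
The plan is as follows. For each of the three defining conditions of $E_r(z)$, I will show that by choosing $\alpha \in (1/2,1)$ sufficiently close to $1$ and $A > 1$ sufficiently large (both depending only on $q$), the condition holds with probability at least $1-\eta$, uniformly in $z \in K$ and in all sufficiently small $r > 0$, for any prescribed $\eta > 0$. Combining these estimates and applying the annulus iteration Lemma~\ref{lem-annulus-iterate} to a geometric subsequence of radii inside $[\ep^2,\ep]$ will then produce the desired $O_\ep(\ep^q)$ bound.

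Condition~\ref{item-attained-dist} follows from Lemma~\ref{lem-scaled-metric-ratio} applied to the enlarged ball $B_{2r}(z)$ with $b := (1-\alpha)/2$: any $u \in \bdy B_{\alpha r}(z)$ and $v \in \bdy B_r(z)$ satisfy $|u-v| \geq (1-\alpha) r = b\cdot 2r$, so the conclusion of that lemma yields $D_h^\phi(u,v) \leq (1+\delta) D_h(u,v)$, with probability tending to $1$ as $r\to 0$ uniformly in $z \in K$. Condition~\ref{item-attained-around} has probability independent of $z$ and $r$ by scale and translation invariance of the whole-plane GFF modulo additive constant together with Axioms~\ref{item-metric-f} and~\refcoord; since $D_h$ induces the Euclidean topology, there is a.s.\ a loop around $\BB A_{\alpha r, r}(z)$ with finite $D_h$-length, so taking $A$ large drives this probability to $1$. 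The main technical step is condition~\ref{item-attained-long}: when $\alpha$ is close to $1$, any path in $\ol{\BB A_{\alpha r, r}(z)}$ from $u\in\bdy B_{\alpha r}(z)$ to $v\in \bdy B_r(z)$ with $D_h(u,v)$ or $D_h^\phi(u,v)$ exceeding the distance to $\bdy \BB A_{r/2,2r}(z)$ must stay close to a circular arc of $\bdy B_r(z)$, whereas a $D_h$-geodesic in the wider annulus $\BB A_{r/2,2r}(z)$ can cut across. A scale-invariant lower bound for the $D_h$-length of near-circular paths (uniform in $r$ and $z\in K$ via translation invariance of the GFF modulo additive constant) then forces the thin-annulus path to be strictly longer than $D_h(u,v;\BB A_{r/2,2r}(z))$. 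This arc-length estimate is the main obstacle of the proof.

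Combining, one obtains $\BB P[E_r(z)] \geq p$ uniformly in $z \in K$ and $r \in (0, r_0]$, where $p$ can be chosen arbitrarily close to $1$ and $r_0 = r_0(\delta,q,K)$. By Lemma~\ref{lem-attained-msrble}, together with the elementary observation that the circle average $h_r(z)$ is recoverable from $h|_{\BB A_{r/2,2r}(z)}$ modulo an additive constant, we have $E_r(z) \in \sigma\bigl((h-h_r(z))|_{\BB A_{r/2,2r}(z)}\bigr)$. Setting $R := 4r$ rewrites the annulus as $\BB A_{R/8, R/2}(z) \subset B_R(z)$, matching the hypothesis of Lemma~\ref{lem-annulus-iterate} with $s_1 = 1/8$ and $s_2 = 1/2$ (centered at $z$ rather than $0$, which is harmless by translation invariance). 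Apply the lemma along $r_k := 8^{-k}$, a subsequence of $\{2^{-j}\}_{j\in\BB N}$ satisfying $r_{k+1}/r_k = s_1$; there are $K_\ep \geq c \log(1/\ep)$ such $r_k$ in $[\ep^2,\ep]$, where $c = 1/\log 8$. Choose the parameter $a$ in Lemma~\ref{lem-annulus-iterate} so that $ac \geq q$, and take the corresponding threshold $p = p(a,1/2,s_1,s_2)$; this in turn dictates how close to $1$ the parameter $\alpha$ must be and how large $A$ must be (both depending only on $q$). Then the probability that no $E_{r_k}(z)$ occurs is bounded by $\BB P[N(K_\ep) < K_\ep/2] \leq c' e^{-a K_\ep} = O_\ep(\ep^q)$, which gives the lemma.
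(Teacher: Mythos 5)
Your overall plan — reduce via Lemma~\ref{lem-annulus-iterate} to a uniform lower bound $\BB P[E_r(z)] \geq p$ and then verify that each of conditions~\ref{item-attained-dist},~\ref{item-attained-long},~\ref{item-attained-around} separately holds with probability close to $1$ — is exactly the paper's. Your bookkeeping for the annulus iteration (re-centering at $z$, rewriting $\BB A_{r/2,2r}(z)$ in the form $\BB A_{s_1 R, s_2 R}(z)$ with $R = 4r$, $s_1 = 1/8$, $s_2 = 1/2$, picking a geometric subsequence of radii, and choosing $a$ so that $a/\log 8 \geq q$) is sound. Your treatment of condition~\ref{item-attained-dist} via Lemma~\ref{lem-scaled-metric-ratio} and of condition~\ref{item-attained-around} via Axioms~\ref{item-metric-f},~\refcoord\ and scale/translation invariance matches the paper's.

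The one place you stop short is the step you yourself flag as ``the main obstacle of the proof'': the lower bound on the $D_h$-length of paths confined to the thin annulus $\ol{\BB A_{\alpha r, r}(z)}$ between far-apart endpoints. You correctly identify the shape of the needed statement (a scale-invariant estimate saying that such near-circular paths are much $D_h$-longer than a crossing of $\BB A_{r/2,2r}(z)$), but you do not supply it, and as written this is a genuine gap. The missing ingredient is precisely Lemma~\ref{lem-attained-long} (quoted from~\cite{gm-uniqueness}): for any $S>s>0$ and $p\in(0,1)$ there is $\alpha_*$ such that for $\alpha\in[\alpha_*,1)$, with probability $\geq p$ every pair $u,v\in\BB A_{\alpha r, r}(z)$ with $D_h(u,v) \geq s r^{\xi Q}e^{\xi h_r(z)}$ satisfies $D_h(u,v;\BB A_{\alpha r,r}(z)) \geq S r^{\xi Q}e^{\xi h_r(z)}$. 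You should cite this directly rather than re-derive it. To use it, choose $s$ and $S$ so that with high probability $D_h(\bdy\BB A_{3r/4,r}(z), \bdy\BB A_{r/2,2r}(z)) \geq s r^{\xi Q}e^{\xi h_r(z)}$ and $\sup_{u,v\in\BB A_{3r/4,r}(z)} D_h(u,v;\BB A_{r/2,2r}(z)) \leq S r^{\xi Q}e^{\xi h_r(z)}$; then $\alpha = \alpha_*(s,S,\cdot)$ delivers the strict inequality in condition~\ref{item-attained-long}. A second detail you gloss over: condition~\ref{item-attained-long} involves both $D_h(u,v) > D_h(u,\bdy\BB A_{r/2,2r}(z))$ and the analogous inequality for $D_h^\phi$, so you must verify the $s,S$ bounds for $D_h^\phi$ as well. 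This follows from your existing small-scale comparison (Proposition~\ref{prop-scaled-metric-uniform}/Lemma~\ref{lem-scaled-metric-ratio}): since $D_h^\phi$ and $D_h$ agree up to a factor close to $1$ on small balls with high probability, decreasing $s$ and increasing $S$ by a fixed factor handles $D_h^\phi$ at no extra cost, keeping $\alpha$ and $A$ dependent only on $q$ as required.
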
 

To deal with condition~\ref{item-attained-long} in the definition of $E_r(z)$, we will use the following lower bound for the $D_h$-lengths of paths in a narrow Euclidean annulus, which is~\cite[Lemma 2.11]{gm-uniqueness} (note that the number $\frk c_r$ from~\cite{gm-uniqueness} is equal to $r^{\xi Q}$, see~\cite[Section 1.4]{gm-uniqueness}).

\begin{lem}[\cite{gm-uniqueness}] \label{lem-attained-long}
For each $S > s > 0$ and each $p\in (0,1)$, there exists $\alpha_* = \alpha_*(s,S,p) \in (1/2,1)$ such that for each $\alpha\in [\alpha_*,1)$, each $z\in\BB C$, and each $r > 0$, 
\eqb \label{eqn-attained-long} 
\BB P\left[ \inf\left\{ D_h\left( u , v ; \BB A_{\alpha r , r}(z) \right)  :  u , v \in \BB A_{\alpha  r , r }(z) , D_h(u,v) \geq s  r^{\xi Q} e^{\xi h_{\BB r}(z)} \right\} \geq S  r^{\xi Q} e^{\xi h_r(z)}  \right] \geq p . 
\eqe 
\end{lem}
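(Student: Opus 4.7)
The plan is to prove the lemma by a reduction to canonical scale followed by a covering-by-angular-sectors argument, exploiting the fact that a thin annulus must be traversed ``the long way around'' by short-$D_h$ paths.

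The first step would be to reduce to the canonical scale $z=0$, $r=1$ under the normalization $h_1(0) = 0$. By the translation and scale invariance of the law of the whole-plane GFF modulo additive constant, combined with Axioms~\ref{item-metric-f} and~\refcoord, the field $\wt h(w) := h(rw+z) - h_r(z)$ satisfies $\wt h \eqD h$ with $\wt h_1(0) = 0$, and one has $D_{\wt h}(u,v) = r^{-\xi Q} e^{-\xi h_r(z)} D_h(ru+z, rv+z)$ with the analogous identity for the internal metric on $\BB A_{\alpha,1}(0)$ and $\BB A_{\alpha r, r}(z)$. Hence it suffices to prove: for each $S > s > 0$ and each $p\in(0,1)$, there exists $\alpha_*\in(1/2,1)$ such that for all $\alpha\in[\alpha_*,1)$, with probability at least $p$, every $u,v\in \BB A_{\alpha,1}(0)$ with $D_h(u,v)\geq s$ satisfies $D_h(u,v;\BB A_{\alpha,1}(0))\geq S$.

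I would prove the contrapositive form: with probability at least $p$, every continuous path $P$ in $\BB A_{\alpha,1}(0)$ of $D_h$-length at most $S$ has $D_h$-diameter at most $s$. Partition $\BB A_{\alpha,1}(0)$ into $N$ equally spaced angular sectors $\mcl S_k := \{\rho e^{i\theta} : \alpha \leq \rho \leq 1,\, 2\pi k/N \leq \theta \leq 2\pi(k+1)/N\}$, and let $w_k$ denote the $D_h$-distance, internal to $\mcl S_k$, between its two radial boundary segments. By rotational invariance of the whole-plane GFF modulo additive constant together with Axiom~\ref{item-metric-f}, the $w_k$ are identically distributed, and each is a.s.\ positive since $D_h$ induces the Euclidean topology. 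Given $P$ with continuous lifted angle function of range $[\theta_{\min},\theta_{\max}]$ and extent $\Theta := \theta_{\max}-\theta_{\min}$, a standard first/last-time argument produces, for every $k$ with $[2\pi k/N, 2\pi(k+1)/N] \subset [\theta_{\min},\theta_{\max}]$, a subpath of $P$ from angle $2\pi k/N$ to angle $2\pi(k+1)/N$ lying entirely in $\mcl S_k$; these subpaths live in disjoint open angular ranges, so their $D_h$-lengths add. This yields $\op{len}(P; D_h) \geq (\Theta N/(2\pi) - 2)\min_k w_k$, so $\op{len}(P; D_h) \leq S$ forces $\Theta \leq 2\pi(S/\min_k w_k + 2)/N$.

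A subset of $\BB A_{\alpha,1}(0)$ of angular extent $\Theta$ has Euclidean diameter at most $\Theta + (1-\alpha)$. Since $D_h$ induces the Euclidean topology on the compact set $\ol{\BB A_{1/2,1}(0)}$, one can choose a deterministic $\ep_0 > 0$ so that, with probability at least $1-(1-p)/3$, any two points of this set at Euclidean distance at most $\ep_0$ are at $D_h$-distance at most $s$. Then, in order, choose $N$ large, $c > 0$ small enough that $N \BB P[w_0 < c] < (1-p)/3$ (feasible by the a.s.\ positivity of $w_0$ and a union bound), and finally $\alpha_*$ close enough to $1$ that $2\pi(S/c + 2)/N + (1-\alpha_*) \leq \ep_0$. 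On the intersection of these two high-probability events, every path of $D_h$-length at most $S$ has angular extent at most $2\pi(S/c + 2)/N$, hence Euclidean diameter at most $\ep_0$, hence $D_h$-diameter at most $s$, completing the proof.

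The main obstacle is the coupled ordering of the parameters: shrinking $c$ to achieve $\min_k w_k \geq c$ with high probability degrades the upper bound on the angular extent, forcing $N$ to be correspondingly larger. The argument is internally consistent because the choices $\ep_0 \leadsto N \leadsto c \leadsto \alpha_*$ are sequential and each remains feasible given the previous ones; in particular, for any fixed $N$ we may take $c$ arbitrarily small while keeping $N\BB P[w_0 < c]$ small, and $1-\alpha_*$ may then be taken as small as needed independently.
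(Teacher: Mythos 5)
Note first that this paper does not prove Lemma~\ref{lem-attained-long} at all: it is quoted verbatim from~\cite[Lemma 2.11]{gm-uniqueness}, whose proof rests on quantitative estimates for the LQG metric (lower tails for crossing distances) developed in the earlier papers, not merely on the axioms and soft continuity facts. Your overall strategy --- force a path confined to a thin annulus to cross many disjoint angular sectors, charge each crossing at least the sector-crossing distance, conclude that a path of $D_h$-length at most $S$ has small angular extent, hence small Euclidean diameter, hence small $D_h$-diameter --- is the right geometric picture. The first/last-time extraction of disjoint sector crossings and the reduction to the unit scale via Axioms~\ref{item-metric-f} and~\refcoord\ are fine.

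The genuine gap is the order of quantifiers in your final parameter selection, which you flag as ``the main obstacle'' but do not actually resolve. You choose $N$, then $c$ with $N\,\BB P[w_0<c]<(1-p)/3$, then $\alpha_*$ with $2\pi(S/c+2)/N+(1-\alpha_*)\leq \ep_0$. But taking $\alpha_*$ close to $1$ only shrinks the term $(1-\alpha_*)$; it does nothing to $2\pi(S/c+2)/N$, which blows up as $c\downarrow 0$ with $N$ fixed. So the last step is infeasible unless $c\gtrsim S/(\ep_0 N)$, and whether you can take $c$ that large while keeping $N\,\BB P[w_0<c]$ small is exactly a quantitative lower-tail estimate for the crossing distance of a sector of angular width $2\pi/N$ at threshold of order $1/N$, decaying faster than $1/N$. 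This does not follow from the soft fact that $w_0>0$ a.s.; worse, $w_0$ itself depends on $N$ (its two radial boundary segments are at Euclidean distance of order $1/N$), so $w_0\rta 0$ as $N\rta\infty$ and your claim that ``for any fixed $N$ we may take $c$ arbitrarily small \dots and $1-\alpha_*$ may then be taken as small as needed independently'' does not close the loop. (The dependence of $w_0$ on $\alpha$, by contrast, is harmless: internal distances in sectors of $\BB A_{\alpha,1}(0)$ dominate those in the corresponding sectors of $\BB A_{1/2,1}(0)$, so $c$ can be chosen uniformly in $\alpha\geq 1/2$.) To repair the argument you would need an input of the type proved in~\cite{lqg-metric-estimates} and used in~\cite{gm-uniqueness}: a uniform-in-scale bound showing the probability that the distance across a fixed-shape region of diameter $\delta$ is smaller than a small multiple of its median decays superpolynomially in $\delta$, which can then be union-bounded over the $\asymp N$ sectors. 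Without such an estimate the proposal, as written, does not prove the lemma.
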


\begin{proof}[Proof of Lemma~\ref{lem-shorter-annulus}]
By Lemma~\ref{lem-attained-msrble}, we can apply Lemma~\ref{lem-annulus-iterate} to find that there exists $p = p(q) \in (0,1)$ such that if~\eqref{eqn-shorter-annulus-show} just below holds, then~\eqref{eqn-shorter-annulus} holds:
\eqb \label{eqn-shorter-annulus-show}
\text{$\exists r_0 = r_0(K,\delta) > 0$ such that} \: \BB P[E_r(z)] \geq  p ,\quad \forall z\in K ,  \quad \forall r \in (0,r_0]  . 
\eqe
It therefore suffices to choose $\alpha$ and $A$ in a manner depending only on $p$ in such a way that~\eqref{eqn-shorter-annulus-show} holds. 

We first deal with condition~\ref{item-attained-long}.
By Axioms~\ref{item-metric-f} and~\refcoord, the fact that $D_h$ induces the Euclidean topology, and the scale and translation invariance of the law of $h$, modulo additive constant, we can find $S > s > 0$ depending on $ p$ such that for each sufficiently small $r>0$ (depending only on $K$), for each $z\in K$ it holds with probability at least $1 - (1- p)/4$ that  
\eqb
  D_h\left( \bdy \BB A_{3r/4,r}(z) , \bdy \BB A_{r/2 , 2r}(z)  \right)   \geq  s r^{\xi Q} e^{\xi h_r(z)} \quad \text{and} \quad 
\sup_{u,v\in \BB A_{3 r /4 , r}(z)} D_h\left( u ,v ; \BB A_{r/2,2r}(z)\right) \leq S r^{\xi Q} e^{\xi h_r(z)} .
\eqe 
Since $\phi$ is nearly linear at small scales, after possibly decreasing $s$ and increasing $S$ we can arrange that the same is true with $D^\phi_h$ in place of $D_h$. 
Since $ \BB A_{\alpha r , r}(z) \subset  \BB A_{3 r /4 , r}(z) $ for any choice of $\alpha \in [3/4,1)$, Lemma~\ref{lem-attained-long} with the above choice of $s$ and $S$ gives an $\alpha \in [3/4,1)$ depending on $ p$ such that for each sufficiently small $r > 0$, it holds for each $z\in K$ that the probability of condition~\ref{item-attained-long} in the definition of $E_r(z)$ is at least $1 - (1-p)/3$. 
 
By applying Axioms~\ref{item-metric-f} and~\refcoord\, as above, we can find $A > 1$ depending on $p$ such that for each sufficiently small $r > 0$, it holds for each $z\in K$ that the probability of condition~\ref{item-attained-around} in the definition of $E_r(z)$ is at least $1 - (1 -    p)/3$.
 
By Lemma~\ref{lem-scaled-metric-ratio} applied with $b = 1-\alpha$, for each sufficiently small $r > 0$, it holds for each $z\in K$ that the probability of condition~\ref{item-attained-dist} in the definition of $E_r(z)$ is at least $1-(1-p)/3$. Combining the three preceding paragraphs shows that~\eqref{eqn-shorter-annulus-show} holds.  
\end{proof}

\begin{lem} \label{lem-shorter-annulus-all}
There is a universal constant $q>1$ such that if $\alpha$ and $A$ are chosen as in Lemma~\ref{lem-shorter-annulus} for this choice of $q$, then for each compact set $K\subset U$ and each $\delta\in(0,1)$, it holds with probability tending to 1 as $\ep\rta 0$ that the following is true. For each $z\in K$, there exists $r   \in [\ep^2,\ep] \cap \{2^{-k} : k\in\BB N\}$ and $w\in \left(\frac{\ep^2 }{4}  \BB Z^2 \right) \cap B_\ep(K)$ such that $z\in B_{r/2}(w)$ and $E_r(w)  $ occurs. 
\end{lem}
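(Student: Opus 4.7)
The plan is a straightforward union bound argument over a lattice of base points, combined with the single-point estimate from Lemma~\ref{lem-shorter-annulus}. First I would fix a universal constant $q > 4$ (say $q = 5$), and let $\alpha \in (1/2,1)$ and $A > 1$ be the parameters furnished by Lemma~\ref{lem-shorter-annulus} for this choice of $q$. Given a compact $K \subset U$ and $\delta \in (0,1)$, pick a compact set $K' \subset U$ containing $B_1(K)$; then for all sufficiently small $\ep > 0$, every lattice point in $\left(\frac{\ep^2}{4}\BB Z^2\right) \cap B_\ep(K)$ lies in $K'$, and applying Lemma~\ref{lem-shorter-annulus} with $K'$ in place of $K$ gives, uniformly in such $w$,
\eqb
\BB P\left[ E_r(w) \text{ fails for every } r \in [\ep^2,\ep] \cap \{2^{-k} : k \in \BB N\} \right] = O_\ep(\ep^q) .
\eqe

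Next I would count: the set $\left(\frac{\ep^2}{4}\BB Z^2\right) \cap B_\ep(K)$ has cardinality $O_\ep(\ep^{-4})$, since the lattice spacing is $\ep^2/4$ while $B_\ep(K)$ has Euclidean area bounded uniformly in $\ep \in (0,1]$. A union bound therefore shows that with probability at least $1 - O_\ep(\ep^{q-4}) \rta 1$ the event $G_\ep$ holds, where $G_\ep$ is the event that for every lattice point $w \in \left(\frac{\ep^2}{4}\BB Z^2\right) \cap B_\ep(K)$ there exists $r_w \in [\ep^2,\ep] \cap \{2^{-k} : k \in \BB N\}$ with $E_{r_w}(w)$ occurring.

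Finally, on $G_\ep$ the desired covering property is immediate: given $z \in K$, take $w$ to be any nearest point of $\frac{\ep^2}{4}\BB Z^2$ to $z$, so that $|z - w| \leq \frac{\sqrt{2}}{8}\ep^2 < \ep^2/2$. In particular $w \in B_\ep(K)$ for small $\ep$, and the corresponding scale $r_w \geq \ep^2$ satisfies $|z - w| < \ep^2/2 \leq r_w/2$, giving $z \in B_{r_w/2}(w)$ as required. The only technical point worth noting is the uniformity in $w$ of the per-point estimate from Lemma~\ref{lem-shorter-annulus}, which is handled by applying that lemma to the slightly enlarged compact set $K'$ chosen in advance of $\ep$. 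Otherwise the argument is a clean counting/union bound, and what makes it succeed is the flexibility in Lemma~\ref{lem-shorter-annulus} to take $q$ larger than the lattice-count exponent $4$.
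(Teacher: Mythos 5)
Your proposal is correct and is essentially the same argument the paper uses, which is simply a union bound over the lattice $\left(\frac{\ep^2}{4}\BB Z^2\right)\cap B_\ep(K)$ combined with Lemma~\ref{lem-shorter-annulus}. You have merely filled in the routine details (counting the $O_\ep(\ep^{-4})$ lattice points, noting that $q>4$ suffices, and checking that the nearest lattice point $w$ to $z\in K$ satisfies $|z-w|<\ep^2/2\le r_w/2$), all of which is consistent with the paper's terse one-line proof.
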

\begin{proof} 
Upon choosing $q$ sufficiently large, this follows from Lemma~\ref{lem-shorter-annulus} and a union bound over all $w \in  \left(\frac{\ep^2}{4}  \BB Z^2 \right) \cap B_\ep(K)$. 
\end{proof}

\begin{figure}[t!]
 \begin{center}
\includegraphics[scale=1]{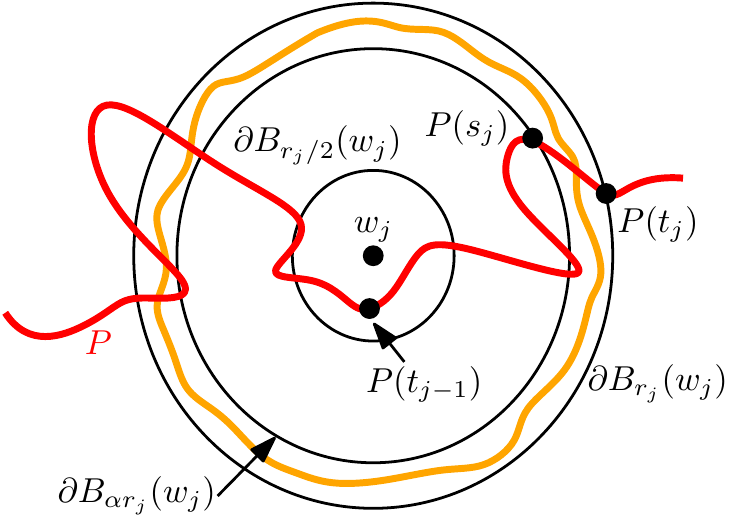}
\vspace{-0.01\textheight}
\caption{Illustration of the proof of Proposition~\ref{prop-attained}. The $D_h$-geodesic $P$ from $\BB z$ to $\BB w$ along with one of the balls $B_{r_j}(w_j)$ hit by $P$ for which $E_{r_j}(w_j)$ occurs are shown. 
The time $t_j$ is the first time after $t_{j-1}$ at which $P$ exits $B_{r_j}(w_j)$ and the time $s_j$ is the last time before $t_j$ at which $P$ hits $\bdy B_{\alpha r_j}(w_j)$. Condition~\ref{item-attained-dist} in the definition of $E_{r_j}(w_j)$ tells us that $D_h^\phi(P(s_j) , P(t_j)) \leq (1+\delta) (t_j - s_j)$. 
The orange path comes from condition~\ref{item-attained-around} in the definition of $E_{r_j}(w_j)$.
It has $D_h$-length is at most $A D_h(\bdy B_{\alpha r_j}(w_j) , \bdy B_{r_j}(w_j)) \leq A (t_j - s_j)$. 
Since $P$ is a $D_h$-geodesic and $P$ crosses this orange path both before time $t_{j-1}$ and after time $s_j$, it follows that $s_j - t_{j-1} \leq A (t_j - s_j)$. This allows us to show that the ``good" intervals $[s_j , t_j]$ occupy a uniformly positive fraction of the total $D_h$-length of $P$. This then allows us to show that $D_h^\phi(\BB z , \BB w) \leq C_\delta D_{h|_U}(\BB z ,\BB w)$ for a constant $C_\delta >0$ which is strictly smaller than $C_*$ if we assume that $C_*>1$ and $\delta$ is chosen to be sufficiently small. 
}\label{fig-attained}
\end{center}
\vspace{-1em}
\end{figure} 
 
\begin{proof}[Proof of Proposition~\ref{prop-attained}] 
See Figure~\ref{fig-attained} for an illustration of the proof.
\medskip

\noindent\textit{Step 1: setup.}
Let $\alpha$ and $A$ be chosen as in Lemma~\ref{lem-shorter-annulus-all}.
Also let 
\eqb \label{eqn-max-def}
C_* := \inf\left\{ C > 1 : \BB P\left[ \sup_{z,w\in U, z\not=w} \frac{D_h^\phi(z,w)}{D_{h|_U}(z,w)} \leq C  \right] = 1 \right\} .
\eqe
Proposition~\ref{prop-coord-bilip} implies that $C_* <\infty$. We want to show that $C_* \leq 1$.  

To this end, we will show that a.s.\ 
\eqb \label{eqn-lower-ratio}
D_h^\phi(\BB z , \BB w) \leq C_\delta D_{h|_U}(\BB z , \BB w) ,
\quad\forall \BB z,\BB  w \in U , \quad 
\text{where} \quad C_\delta := 1 + \delta     + \frac{A}{ A+1 }(C_* - 1 - \delta) . 
\eqe
If $C_* > 1$ and $\delta > 0 $ is chosen sufficiently small (depending on $C_*$ and $A$), then $C_\delta < C_*$. This contradicts the definition of $C_*$, so we infer that $C_* \leq 1$. 
It remains only to prove~\eqref{eqn-lower-ratio}. 
\medskip

\noindent\textit{Step 2: regularity event.} 
The idea of the proof of~\eqref{eqn-lower-ratio} is to use Lemma~\ref{lem-shorter-annulus-all} and conditions~\ref{item-attained-dist} and~\ref{item-attained-around} in the definition of $E_r(z)$ to show that if $P : [0,D_h(\BB z,\BB w)] \rta \BB C$ is a $D_h$-geodesic, then with high probability the following is true. We can cover a $1-\frac{A}{A+1}$-fraction of the interval $[0,D_h(\BB z,\BB w)]$ by intervals of the form $[s,t]$ such that $D_h^\phi(P(s) ,P(t)) \leq (1+\delta) (t-s)$. 
In order for the comparison of $D_h$-lengths and $D_h^\phi$-lengths to make sense, we need to make sure that our $D_h$-geodesic $P$ stays in $U$. We now introduce a regularity event on which we can force certain $D_h$-geodesics to stay in $U$. 

Fix a compact set $K\subset U$ and let $\zeta\in (0,1)$ be a small parameter which we will eventually send to zero.
By the continuity of $D_h$, we can find a small parameter $\rho \in (0,1)$ and a compact set $K'$ satisfying $K\subset K' \subset U$, depending on $K$ and $\zeta$, such that with probability at least $1-\zeta$, we have 
\eqb \label{eqn-attained-dist-reg}
D_h(\BB z , \BB w) \leq D_h(\BB z , \BB C\setminus K') ,\quad \forall \BB z , \BB w \in K \quad \text{with} \quad |\BB z-\BB w|  \leq \rho .
\eqe
If~\eqref{eqn-attained-dist-reg} holds, then $D_h(\BB z , \BB w) = D_{h|_U}(\BB z,\BB w)$ for each pair of points $\BB z , \BB w \in K $ with $|\BB z-\BB w|  \leq \rho $ and moreover every $D_h$-geodesic between two such points is contained in $K'$, so in particular is also a $D_{h|_U}$-geodesic. 

For $\ep > 0$, let $F_{K'}^\ep$ be the event that~\eqref{eqn-attained-dist-reg} holds and the event of Lemma~\ref{lem-shorter-annulus-all} occurs with the above choices of $\alpha,A,\delta $ and with $K'$ in place of $K$, so that $\BB P[F_{K'}^\ep] \geq 1 -\zeta - o_\ep(1)$.  
\medskip

\noindent\textit{Step 3: reducing to an estimate for nearby points.}
We claim that on $F_{K'}^\ep$, it is a.s.\ the case that
\eqb \label{eqn-attained-show}
D_h^\phi(\BB z , \BB w) \leq C_\delta D_{h|_U}(\BB z , \BB w) + o_\ep(1) ,\quad \forall \BB z , \BB w \in K \quad \text{with} \quad |\BB z-\BB w|  \leq \rho ,
\eqe
where the $o_\ep(1)$ is a random error which tends to zero in probability as $\ep\rta 0$, uniformly over all $\BB z,\BB w \in K$. 
Before proving~\eqref{eqn-attained-show}, we explain why it implies~\eqref{eqn-lower-ratio}.
Applying~\eqref{eqn-attained-show} and sending $\ep\rta 0$ shows that with probability at least $1-\zeta$, we have $D_h^\phi(\BB z , \BB w) \leq C_\delta D_{h|_U}(\BB z , \BB w)$
for each $ \BB z , \BB w \in K$ with $|\BB z-\BB w|  \leq \rho $. This implies that with probability least $1-\zeta$, the $D_h^\phi$-length of any path contained in $K$ is at most $C_\delta$ times its $D_{h|_U}$-length. Since $D_{h|_U}$ and $D_h^\phi$ are length metrics, sending $\zeta \rta 0$ and letting $K$ increase to all of $U$ gives~\eqref{eqn-lower-ratio}.  
\medskip

\noindent\textit{Step 4: decomposition of a $D_h$-geodesic into segments. }
Assume that $F_{K'}^\ep$ occurs, let $\BB z,\BB w \in K$ with $|\BB z-\BB w| \leq \rho$, and let $P$ be a $D_h$-geodesic from $\BB z$ to $\BB w$. As noted after~\eqref{eqn-attained-dist-reg}, we have $P\subset K'$. 
We will define several objects which depend on $P$ and $\ep$, but to lighten notation we will not make $P$ and $\ep$ explicit in the notation. See Figure~\ref{fig-attained} for an illustration of the definitions.

Let $t_0  = 0$ and inductively let $t_j$ for $j\in\BB N$ be the smallest time $t \geq t_{j-1}$ at which $P$ exits a Euclidean ball of the form $B_{r}(w)$ for $w\in \left(\frac{\ep^2 }{4} \BB Z^2 \right) \cap B_\ep(K)$ and $r\in [\ep^2 , \ep] \cap \{2^{-k} : k\in\BB N\}$ such that $P(t_{j-1}) \in B_{r/2}(w)$ and $E_r(w)$ occurs; or let $t_j = D_h(\BB z , \BB w )$ if no such $t$ exists.  
If $t_j  < D_h(\BB z , \BB w )$, let $w_j$ and $r_j$ be the corresponding values of $w$ and $r$. 
Also let $s_j$ be the last time before $t_j$ at which $P$ exits $B_{\alpha r_j}(w)$.
Note that $s_j \in [t_{j-1} , t_j]$ and $P([s_j , t_j]) \subset \ol{ \BB A_{\alpha r_j , r_j}(w_j)}$.

Define the indices
\eqb
\ul J := \max\left\{ j\in \BB N : |\BB z - P(t_{j-1} )| <  2 \ep \right\}
\quad \text{and} \quad
\ol J := \min\left\{j\in\BB N : |\BB w - P(t_{j+1} )|  <  2 \ep   \right\} .
\eqe
Since $r_j \leq \ep $ and $P(t_j) \in B_{r_j}(w_j)$ for each $j$, we have $\BB z , \BB w \notin B_{ r_j}(w_j)$ for $j \in [\ul J , \ol J]_{\BB Z}$. 
By the definition of $F_{K'}^\ep$, on this event we have $t_j  < D_h(\BB z,\BB w)$ and $|P(t_{j-1}) - P(t_j)| \leq 2 \ep $ whenever $|\BB w - P(t_{j-1})| \geq  \ep $. 
Therefore, on $F_{K'}^\ep$,  
\eqb \label{eqn-endpoint-contain}
P(t_{\ul J}) \in B_{4\ep }(\BB z) \quad \text{and} \quad P(t_{\ol J}) \in B_{4\ep }(\BB w) .
\eqe

Since $P$ is a $D_h$-geodesic, for $j\in [\ul J , \ol J]_{\BB Z}$ also $P|_{[s_j , t_j]}$ is a $D_h$-geodesic from $P(s_j) \in \bdy B_{\alpha r_j}(w_j)$ to $P(t_j) \in \bdy B_{r_j}(w_j)$.
By definition, this $D_h$-geodesic stays in $\ol{\BB A_{\alpha r_j , r_j}(w_j)}$. 
Combining this with condition~\ref{item-attained-dist} in the definition of $E_{r_j}(w_j)$ (applied with $u = P(s_j)$ and $v = P(t_j)$) and the definition~\eqref{eqn-max-def} of $C_*$, we obtain  
\eqb \label{eqn-increment-tilde-D}
D^\phi_h\left( P(s_j) , P(t_j)  \right) \leq (1+\delta) (t_j - s_j) 
\quad \text{and} \quad
D^\phi_h\left( P(t_{j-1}) , P(s_j) \right) \leq  C_* (s_j - t_{j-1}  ) ,\quad \forall j \in [\ul J ,\ol J]_{\BB Z}.
\eqe 
\medskip

\noindent\textit{Step 5: comparing $s_j - t_{j-1}$ to $t_j - s_j$.} 
In order to extract a non-trivial upper bound for $D_h^\phi(\BB z,\BB w)$ from~\eqref{eqn-increment-tilde-D}, we need to show that the ``good" intervals $[s_j,t_j]$ occupy a positive fraction of the total length of the time interval $[0,D_h(\BB z,\BB w)]$. To this end, we will now argue that $ s_j - t_{j-1}$ is not too much larger than $t_j - s_j$.
 
If $j \in [\ul J , \ol J]_{\BB Z}$, then since $r_j \leq \ep $ and $|P(t_j) -\BB z| \wedge |P(t_j) - \BB w| \geq 2\ep $, the geodesic $P$ must cross the annulus $\BB A_{\alpha r_j,r_j}(w_j)$ at least once before time $t_{j-1}$ and at least once after time $s_j$. 
By the definition of $E_{r_j}(w_j)$, there is a path disconnecting the inner and outer boundaries of this annulus with $D_h$-length at most $A D_h\left(\bdy B_{\alpha r_j}(w_j) , \bdy B_{  r_j}(w_j) \right)$. 
The geodesic $P$ must hit this path at least once before time $t_{j-1}$ and at least once after time $s_j$. 
Since $P$ is a geodesic and $P(s_j) \in \bdy B_{\alpha r_j}(w_j)$, $P(t_j) \in \bdy B_{ r_j}(w_j)$, it follows that 
\eqbn 
s_j - t_{j-1}  \leq A D_h\left(\bdy B_{\alpha r_j}(w_j) , \bdy B_{  r_j}(w_j) \right) \leq A (t_j - s_j) .
\eqen
Adding $A (s_j - t_{j-1})$ to both sides of this inequality, then dividing by $A+1$, gives
\eqb \label{eqn-increment-compare}
s_j - t_{j-1} \leq  \frac{A}{A+1} (t_j - t_{j-1}) .
\eqe
\medskip

\noindent\textit{Step 6: upper bound for $D^\phi_h$. }
As above, we assume that $F_{K'}^\ep$ occurs, we let $\BB z,\BB w\in K$ with $|\BB z-\BB w|\leq \rho$ and we let $P$ be the $D_h$-geodesic from $\BB z$ to $\BB w$. In the notation above, it holds for each $j\in [\ul J +1 , \ol J]_{\BB Z}$ that
\allb \label{eqn-attained-increment}
D_h^\phi\left(P(t_{j-1}) , P(t_j) \right)
&\leq    D^\phi_h\left( P(t_{j-1}) , P(s_j) \right) +  D^\phi_h\left( P(s_j) , P(t_j)  \right)    \quad  \text{(triangle inequality)} \notag \\
&\leq    C_*( s_j - t_{j-1})  +  (1+\delta) (t_j - s_j)  \quad  \text{(by \eqref{eqn-increment-tilde-D})} \notag \\
&=      (1+\delta) (t_j - t_{j-1} )  + (C_* - 1-\delta ) (s_j - t_{j-1})   \notag \\
&= C_\delta  (t_j - t_{j-1} )  \quad \text{(by \eqref{eqn-increment-compare} and the definition of $C_\delta$)} .
\alle
We now apply~\eqref{eqn-endpoint-contain} and sum the estimate~\eqref{eqn-attained-increment} to get
\allb \label{eqn-attained-sum}
D^\phi_h\left( B_{4 \ep \BB r}(\BB z) , B_{4 \ep \BB r}(\BB w)  \right) 
&\leq D_h^\phi\left( P(t_{\ul J}) , P(t_{\ol J}) \right) \quad \text{(by~\eqref{eqn-endpoint-contain})} \notag \\
&\leq  \sum_{j=\ul J+1}^{\ol J} D_h^\phi\left(P(t_{j-1}) , P(t_j) \right) \notag\\
&\leq C_\delta (t_{\ol J} - t_{\ul J}) \quad \text{(by~\eqref{eqn-attained-increment})} \notag\\ 
&\leq C_\delta D_{h|_U}(\BB z,\BB w) \quad \text{(since $P$ is a $D_{h|_U}$-geodesic)}.
\alle 

By the continuity of $(z,w) \mapsto D_h^\phi(z,w)$ and the triangle inequality, a.s.\  
\eqb \label{eqn-sup-distance}
D^\phi_h\left( \BB z , \BB w  \right)  \leq  D^\phi_h\left( B_{4 \ep \BB r}(\BB z) , B_{4 \ep \BB r}(\BB w)  \right)  + o_\ep(1) 
\eqe 
where the $o_\ep(1)$ tends to 0 in probability as $\ep\rta 0$, uniformly over all $\BB z,\BB w \in K $. Combining this with~\eqref{eqn-attained-sum} gives~\eqref{eqn-attained-show}. 
\end{proof}

\bibliography{cibiblong,cibib}

\newcommand{\etalchar}[1]{$^{#1}$}
\def\cprime{$'$}
\begin{thebibliography}{DFG{\etalchar{+}}20}

\bibitem[Ang19]{ang-discrete-lfpp}
M.~Ang.
\newblock Comparison of discrete and continuum {L}iouville first passage
  percolation.
\newblock {\em Electron. Commun. Probab.}, 24:Paper No. 64, 12, 2019,
  \arxiv{1904.09285}. \MR{4029433}

\bibitem[DD19]{ding-dunlap-lqg-fpp}
J.~Ding and A.~Dunlap.
\newblock Liouville first-passage percolation: {S}ubsequential scaling limits
  at high temperature.
\newblock {\em Ann. Probab.}, 47(2):690--742, 2019, \arxiv{1605.04011}.
  \MR{3916932}

\bibitem[DD20]{ding-dunlap-lgd}
J.~Ding and A.~Dunlap.
\newblock Subsequential scaling limits for {L}iouville graph distance.
\newblock {\em Comm. Math. Phys.}, 376(2):1499--1572, 2020, \arxiv{1812.06921}.
  \MR{4103974}

\bibitem[DDDF19]{dddf-lfpp}
J.~Ding, J.~Dub{\'e}dat, A.~Dunlap, and H.~Falconet.
\newblock {Tightness of Liouville first passage percolation for $\gamma \in
  (0,2)$}.
\newblock {\em ArXiv e-prints}, Apr 2019, \arxiv{1904.08021}.

\bibitem[DF20]{df-lqg-metric}
J.~Dub\'{e}dat and H.~Falconet.
\newblock Liouville metric of star-scale invariant fields: tails and {W}eyl
  scaling.
\newblock {\em Probab. Theory Related Fields}, 176(1-2):293--352, 2020,
  \arxiv{1809.02607}. \MR{4055191}

\bibitem[DFG{\etalchar{+}}20]{lqg-metric-estimates}
J.~Dub{\'e}dat, H.~Falconet, E.~Gwynne, J.~Pfeffer, and X.~Sun.
\newblock {W}eak {LQG} metrics and {L}iouville first passage percolation.
\newblock {\em {P}robability {T}heory and {R}elated {F}ields}, 178(1):369--436,
  2020, \arxiv{1905.00380}.

\bibitem[DG18]{dg-lqg-dim}
J.~{Ding} and E.~{Gwynne}.
\newblock {The fractal dimension of {L}iouville quantum gravity: universality,
  monotonicity, and bounds}.
\newblock {\em {C}ommunications in {M}athematical {P}hysics}, 374:1877--1934,
  2018, \arxiv{1807.01072}.

\bibitem[DS11]{shef-kpz}
B.~Duplantier and S.~Sheffield.
\newblock Liouville quantum gravity and {KPZ}.
\newblock {\em Invent. Math.}, 185(2):333--393, 2011, \arxiv{1206.0212}.
  \MR{2819163 (2012f:81251)}

\bibitem[DZZ19]{dzz-heat-kernel}
J.~Ding, O.~Zeitouni, and F.~Zhang.
\newblock Heat kernel for {L}iouville {B}rownian motion and {L}iouville graph
  distance.
\newblock {\em Comm. Math. Phys.}, 371(2):561--618, 2019, \arxiv{1807.00422}.
  \MR{4019914}

\bibitem[GM19a]{gm-uniqueness}
E.~Gwynne and J.~Miller.
\newblock Existence and uniqueness of the {L}iouville quantum gravity metric
  for {$\gamma \in (0,2)$}.
\newblock {\em {Inventiones Mathematicae}}, to appear, 2019,
  \arxiv{1905.00383}.

\bibitem[GM19b]{local-metrics}
E.~Gwynne and J.~Miller.
\newblock Local metrics of the {G}aussian free field.
\newblock {\em {A}nnales de {l'Institut} {F}ourier}, to appear, 2019,
  \arxiv{1905.00379}.

\bibitem[GM20]{gm-confluence}
E.~Gwynne and J.~Miller.
\newblock Confluence of geodesics in {L}iouville quantum gravity for {$\gamma
  \in (0,2)$}.
\newblock {\em Ann. Probab.}, 48(4):1861--1901, 2020, \arxiv{1905.00381}.
  \MR{4124527}

\bibitem[GP19a]{gp-lfpp-bounds}
E.~{Gwynne} and J.~{Pfeffer}.
\newblock {Bounds for distances and geodesic dimension in Liouville first
  passage percolation}.
\newblock {\em {E}lectronic {C}ommunications in {P}robability}, 24:no. 56, 12,
  2019, \arxiv{1903.09561}.

\bibitem[GP19b]{gp-kpz}
E.~{Gwynne} and J.~{Pfeffer}.
\newblock {KPZ formulas for the Liouville quantum gravity metric}.
\newblock {\em {T}ransactions of the {A}merican {M}athematical {S}ociety}, to
  appear, 2019.

\bibitem[Kah85]{kahane}
J.-P. Kahane.
\newblock Sur le chaos multiplicatif.
\newblock {\em Ann. Sci. Math. Qu\'ebec}, 9(2):105--150, 1985. \MR{829798
  (88h:60099a)}

\bibitem[{Le }13]{legall-uniqueness}
J.-F. {Le Gall}.
\newblock Uniqueness and universality of the {B}rownian map.
\newblock {\em Ann. Probab.}, 41(4):2880--2960, 2013, \arxiv{1105.4842}.
  \MR{3112934}

\bibitem[Mie13]{miermont-brownian-map}
G.~Miermont.
\newblock The {B}rownian map is the scaling limit of uniform random plane
  quadrangulations.
\newblock {\em Acta Math.}, 210(2):319--401, 2013, \arxiv{1104.1606}.
  \MR{3070569}

\bibitem[MQ20]{mq-geodesics}
J.~{Miller} and W.~{Qian}.
\newblock {The geodesics in Liouville quantum gravity are not Schramm-Loewner
  evolutions}.
\newblock {\em {Probab. Theory Related Fields}}, 177(3-4):677--709, 2020,
  \arxiv{1812.03913}.

\bibitem[MS16a]{lqg-tbm2}
J.~{Miller} and S.~{Sheffield}.
\newblock {Liouville quantum gravity and the Brownian map II: geodesics and
  continuity of the embedding}.
\newblock {\em ArXiv e-prints}, May 2016, \arxiv{1605.03563}.

\bibitem[MS16b]{lqg-tbm3}
J.~{Miller} and S.~{Sheffield}.
\newblock {Liouville quantum gravity and the Brownian map III: the conformal
  structure is determined}.
\newblock {\em ArXiv e-prints}, August 2016, \arxiv{1608.05391}.

\bibitem[MS17]{ig4}
J.~Miller and S.~Sheffield.
\newblock Imaginary geometry {IV}: interior rays, whole-plane reversibility,
  and space-filling trees.
\newblock {\em Probab. Theory Related Fields}, 169(3-4):729--869, 2017,
  \arxiv{1302.4738}. \MR{3719057}

\bibitem[MS20]{lqg-tbm1}
J.~Miller and S.~Sheffield.
\newblock Liouville quantum gravity and the {B}rownian map {I}: the {${\rm
  QLE}(8/3,0)$} metric.
\newblock {\em Invent. Math.}, 219(1):75--152, 2020, \arxiv{1507.00719}.
  \MR{4050102}

\bibitem[RV14]{rhodes-vargas-review}
R.~Rhodes and V.~Vargas.
\newblock Gaussian multiplicative chaos and applications: {A} review.
\newblock {\em Probab. Surv.}, 11:315--392, 2014, \arxiv{1305.6221}.
  \MR{3274356}

\bibitem[She16]{shef-zipper}
S.~Sheffield.
\newblock Conformal weldings of random surfaces: {SLE} and the quantum gravity
  zipper.
\newblock {\em Ann. Probab.}, 44(5):3474--3545, 2016, \arxiv{1012.4797}.
  \MR{3551203}

\end{thebibliography}
\bibliographystyle{hmralphaabbrv}

\end{document}